%

\documentclass[preprint]{imsart}
\RequirePackage{amsthm,amsmath,amsfonts,amssymb,bbm}
\RequirePackage[colorlinks,citecolor=blue,urlcolor=blue]{hyperref}
\RequirePackage{graphicx}

\usepackage{stmaryrd}
\usepackage{comment}
\usepackage{multirow,mathtools,latexsym, enumerate, enumitem, pdftricks,tikz,framed,color,enumitem,enumerate,mleftright,mathrsfs}
\usetikzlibrary{arrows.meta, decorations.markings,math} 
\usepackage{romanbar}



\pdfoutput=1

\startlocaldefs
\newcommand{\rn}[1]{\Romanbar{#1}}

\def\G{\mathsf{G}}
\def\cA{c_A}
\def\m{\mathsf{m}}
\def\n{\mathsf{n}}
\def\E{\mathbb{E}}
\def\Pr{\mathbb{P}}

\def\TV{\mathrm{TV}}
\def\Unif{\operatorname{Unif}}
\def\PiBG{\Pi_{\operatorname{BG}}}
\endlocaldefs

\newtheorem{theorem}{Theorem}[section]

\newtheorem{lemma}[theorem]{Lemma}

\newtheorem{remark}[theorem]{Remark}

\newtheorem{assumption}[theorem]{Assumption}
\newtheorem{example}[theorem]{Example}

\theoremstyle{remark}
\newtheorem{definition}[theorem]{Definition}

\theoremstyle{remark}

\definecolor{darkred}{rgb}{0.9,0.1,0.1}

\begin{document}

\tikzset{->-/.style={decoration={
  markings,
  mark=at position #1 with {\arrow{>}}},postaction={decorate}}}


\begin{frontmatter}

\title{Couplings for Andersen Dynamics}
\runtitle{Couplings for Andersen Dynamics}

\begin{aug}
\Author[A]{\fnms{Nawaf} \snm{Bou-Rabee}\ead[label=e1]{nawaf.bourabee@rutgers.edu}}
\and
\Author[B]{\fnms{Andreas} \snm{Eberle}\ead[label=e2]{eberle@uni-bonn.de}}
\address[A]{Department of Mathematical Sciences \\ Rutgers University Camden \\ 311 N 5th Street \\ Camden, NJ 08102 \\ \printead{e1}}
\address[B]{Institut f\"{u}r Angewandte Matehmatik \\  Universit\"{a}t Bonn \\ Endenicher Allee 60 \\  Bonn, Germany 53115 \\ \printead{e2}}

\runAuthor{N. Bou-Rabee \and A.~Eberle}
\end{aug}

\begin{abstract}
Andersen dynamics is a standard method for molecular simulations, and a precursor of the Hamiltonian Monte Carlo algorithm used in MCMC inference. The stochastic process corresponding to Andersen dynamics is a PDMP (piecewise deterministic Markov process) that iterates between Hamiltonian flows and velocity randomizations of randomly selected particles.  Both from the viewpoint of molecular dynamics and MCMC inference, a basic question is to understand the convergence to equilibrium of this PDMP particularly in high dimension.   Here we present couplings to obtain sharp convergence bounds in the Wasserstein sense that do not require global convexity of the underlying potential energy.
\end{abstract}

\begin{keyword}[class=MSC2010]
\kwd[Primary ]{60J25}
\kwd[; secondary ]{65C05}
\end{keyword}

\begin{keyword}
\kwd{Molecular dynamics}
\kwd{Markov Chain Monte Carlo}
\kwd{Hamiltonian Monte Carlo}
\kwd{Couplings}
\end{keyword}

\end{frontmatter}


\noindent \today


\section{Introduction} A common task in molecular dynamics is to simulate a molecular system at a specified temperature  \cite{AlTi1987,FrSm2002}.  
The first method suggested for this purpose goes back to Andersen \cite{An1980, ELi2008}. The stochastic process corresponding to Andersen dynamics is a piecewise deterministic Markov process (PDMP)  \cite{davis1984piecewise, Da1993} that combines Hamiltonian trajectories with velocity randomizations of randomly selected particles such that the resulting PDMP leaves the canonical or Boltzmann-Gibbs distribution invariant \cite{Li2008,ELi2008}.  The durations between consecutive velocity randomizations are i.i.d.~exponential random variables with constant mean determined by a collision frequency parameter, and in between these velocity randomizations, the PDMP follows pure Hamiltonian dynamics.  Andersen dynamics is currently implemented in several molecular dynamics software packages including AMBER and GROMACS \cite{Amber19,Gromacs19} and because of its simplicity and reliability continues to be employed in a wide variety of molecular dynamics simulations \cite{bolhuis2002transition,UbAnVo2004,VaWaAl2018,novikov2019ring,ball2019conformational}.

Besides molecular dynamics, Andersen dynamics plays an important conceptual role in Markov chain Monte Carlo (MCMC) inference.  Indeed, Hamiltonian Monte Carlo (HMC) can be viewed as a refinement of Andersen dynamics to include a Metropolis accept/reject step 
\cite{neal1995bayesian}.  Due to the ability of HMC to overcome the diffusive behavior that limits more conventional MCMC methods like Gibbs, random walk Metropolis and the Metropolis adjusted Langevin algorithm, HMC has garnered a great deal of attention in Bayesian statistics \cite{Ne2011,HoGe2014,mangoubi2017rapid,BoEbZi2020,DurmusMoulinesSaksman,livingstone2019,heng2019unbiased}.

Both from the viewpoint of molecular dynamics and MCMC inference, a basic question with Andersen dynamics is to understand the convergence to equilibrium as a function of the collision frequency parameter particularly in high dimension.  If the collision frequency is too small, then on average the integration times of the Hamiltonian trajectories are very long and the PDMP mainly follows Hamiltonian dynamics which by itself is not ergodic in general; whereas if the collision frequency is too high, then the PDMP will exhibit diffusive behavior and it will again take a long time to sufficiently converge. Nevertheless, like other processes that involve Hamiltonian dynamics, one may hope that Andersen dynamics can achieve faster convergence than random walk based methods if the collision frequency is suitably chosen.

 First steps to understand the convergence of Andersen dynamics in terms of the collision frequency have been taken.  Mixing time bounds for Andersen dynamics on a torus were derived in \cite{ELi2008} by showing that Doeblin's condition holds, and subsequently better bounds were obtained in Theorem 6.5 of \cite{Li2007} in the `free-streaming' case where the potential is switched off. 

Here we consider Andersen dynamics for systems with weakly anharmonic potential energies in an unbounded space, and non-convex, twice continuously differentiable potential energies on a high-dimensional torus with weak interactions between particles.    In these settings, we obtain quantitative bounds for the convergence of Andersen dynamics in a Wasserstein sense.  These bounds reveal that if the collision frequency is suitably chosen, then Andersen dynamics can overcome diffusive convergence behavior.  Moreover, these bounds give an optimal dimension dependence.  We use coupling techniques to obtain these bounds. These techniques are based on the framework introduced in \cite{Eb2016A}, and can be viewed as a continuous-time analog on phase space of recently developed couplings for HMC applied to general non-convex models \cite{BoEbZi2020} and high-dimensional mean-field models \cite{BoSc2020}. The coupling used is itself a PDMP, and at least formally, the analysis is based on bounding the action of the generator of the coupling process on distances tailored to each system considered.

We end this introduction by remarking that the tools developed in this paper might be relevant to quantify mixing times of related PDMPs proposed for molecular dynamics and MCMC inference algorithms including zig-zag and bouncy particle samplers \cite{deligiannidis2018randomized,deligiannidis2019exponential,bierkens2019zig,bierkens2019ergodicity,holderrieth2019cores}.



\section{Andersen dynamics and couplings} \label{sec:andersen}

 In this section, we briefly recall Andersen dynamics and its basic properties needed throughout the paper. Then we introduce a new class of couplings for two copies of the dynamics starting at different initial conditions.

\subsection{Andersen dynamics} \label{subsec:andersen}

Andersen dynamics describes a molecular system at constant $\beta = (k_B T)^{-1}$ where $T$ is the temperature and $k_B$ is the Boltzmann constant.  Here we consider a molecular system consisting of $\m$ particles each with $\n$ dimensions.  A state of the molecular system is denoted by $(x,v) \in \mathbb{R}^{2 \m \n}$ where  $x=(x_1, \dots, x_{\m})$ represents the positions of the particles and $v=(v_1, \dots, v_{\m})$ the corresponding velocities. Let $U: \mathbb{R}^{\m \n} \to \mathbb{R}$ denote the potential energy of the molecular system, and for simplicity,  suppose that all particles have unit masses.   Hence, the Hamiltonian of the molecular system is \[
H(x,v) = (1/2) |v|^2 + U(x) \;.
\]

To precisely define Andersen dynamics, let
\begin{eqnarray} \label{hamiltonian_op} 
	\phi_t(x,v) &:=& \left(x_t(x,v), v_t(x,v)\right)\qquad ( t\in [0,\infty ))
\end{eqnarray}
denote the flow of the Hamiltonian dynamics 
\begin{eqnarray}\label{eq:Hamdyn}
	\frac{d}{dt} x_t  \ = \ v_t, \quad \frac{d}{dt} v_t \ = \ - \nabla U(x_t),\quad
	\left(x_0(x,v),v_0(x,v)\right) \ = \ (x,v) \;.
\end{eqnarray}
  For $\mathsf{a} \in \mathbb{R}^{\n}$ and $\mathsf{i} \in \{1, \dots, \m \}$, define the $\mathsf{i}$-th particle velocity substitution  \begin{eqnarray} \label{sub_op} \mathcal{S}(\mathsf{i}, \mathsf{a})(x, v) & := & (x, (v_1, \dots, v_{\mathsf{i}-1}, \mathsf{a}, v_{\mathsf{i}+1} , \dots, v_{\m})) \;.
  \end{eqnarray} 
  As seen below, this map is notationally convenient for describing the velocity randomization of a randomly selected particle in Andersen dynamics.
On the same probability space,
let $(N_t)_{t \ge 0}$ be a homogeneous Poisson process with intensity $\lambda >0$ called the collision frequency in Andersen dynamics, and let $(T_k)_{k \in \mathbb{N}}$ be the corresponding strictly increasing sequence of jump times;  let $(I_k)_{k \in \mathbb{N}}$ and $(\xi_k)_{k \in \mathbb{N}}$ be independent sequences of i.i.d.~random variables $I_k \sim \Unif\{1,...,\m\}$ and $\xi_k \sim \mathcal{N}(0,\beta^{-1})^{\n}$.  The sequence of random variables $(I_k)_{k \in \mathbb{N}}$ represents the indices of the particles whose velocities get randomized to $(\xi_k)_{k \in \mathbb{N}}$ at the jump times $(T_k)_{k \in \mathbb{N}}$ respectively.
  
  \smallskip

  With this notation, the stochastic process $(X_t, V_t)$ corresponding to Andersen dynamics is defined as follows.

\smallskip

\begin{definition}[Andersen Process] \label{D:andersen} Given $t > 0$, $\lambda > 0$ and an initial condition $(x, v) \in \mathbb{R}^{2 \m \n}$, define $T_0=0$, $\delta T_{k} = T_k - T_{k-1}$ for $k \ge 1$, $(X_0,V_0) = (x,v)$ and \begin{eqnarray*}
(X_t, V_t) & := & \phi_{t - T_{N_t}} \circ  \mathcal{S}(I_{N_t}, \xi_{N_t}) \circ \phi_{\delta T_{N_t}} \circ \cdots \circ \mathcal{S}(I_{1}, \xi_{1}) \circ \phi_{\delta T_1}  (X_0, V_0)  \;.
\end{eqnarray*} 
\end{definition}

\smallskip


The process $(X_t, V_t)$ can also be defined piecewise.  In particular,  the process follows Hamiltonian dynamics in between two consecutive jump times, i.e., \[
(X_s, V_s) = \phi_{s-T_{k-1}} (X_{T_{k-1}},V_{T_{k-1}})\quad\text{for }s \in [T_{k-1}, T_k)\text{ and }k \ge 1.
\]
Moreover, at a jump time, $s=T_k$, the velocity of the $I_k$-th particle instantaneously changes to $\xi_k$, i.e.,
\[
(X_{T_k}, V_{T_k}) = \mathcal{S}(I_k, \xi_k) (X_{T_k-}, V_{T_k-})
\] where $(X_{T_k-}, V_{T_k-}) = \phi_{T_k-T_{k-1}} (X_{T_{k-1}},V_{T_{k-1}})$.  

The dynamics generates a Piecewise Deterministic Markov Process (PDMP) on the state space $\mathbb{R}^{2 \m \n}$.  The law of a PDMP is determined by one or several vector fields which govern its deterministic motion, a measurable function which gives the law of the random times between jumps, and a jump measure which gives the transition probability of its jumps \cite{davis1984piecewise, Da1993}.  
In the case of Andersen dynamics, these are given by: \begin{itemize}
\item the vector field \[
\mathfrak{X}(x,v) =  ( v, - \nabla U(x) ) \;, \quad (x,v) \in \mathbb{R}^{2 \m \n} \;,
\] generating the deterministic Hamiltonian flow;
\item the (constant) jump rate given by the collision frequency $\lambda$; and,
\item the jump measure \[
Q((x,v), (d x^{\prime} \; d v^{\prime})) =  \frac{1}{\m} \sum_{i=1}^{\m} \delta_{x}(d x^{\prime})\varphi_{\beta}(v^{\prime}_i) d v_i^{\prime} \prod_{j \ne i} \delta_{v_j}(d v^{\prime}_j)  \;,
\] 
where $\varphi_{\beta}(v_i^{\prime}) = (2 \pi / \beta )^{-\n/2} \exp(-(\beta/2) |v_i^{\prime}|^2)$.
\end{itemize}

By \cite[Theorem 5.5]{davis1984piecewise}, the corresponding PDMP $(X_t, V_t)$ with given initial condition $(x,v)$  solves the local martingale problem for the extended generator $(\mathcal{G}, \mathcal{D}(\mathcal{G}))$ defined by
\begin{equation}
\mathcal{G} f = \mathcal{L} f + \mathcal{A} f  \;, \quad f \in \mathcal{D}(\mathcal{G}) \;.    
\end{equation}
Here $\mathcal{D}(\mathcal{G})$ is the set of all continuously differentiable functions $f: \mathbb{R}^{2 \m \n} \to \mathbb{R}$,  \begin{equation}
 \mathcal{L} f(x,v) = \mathfrak{X}(x,v) \cdot \nabla f(x,v) = v \cdot \nabla_x f(x,v) - \nabla U(x) \cdot \nabla_v f(x,v)   \label{eq:L}
\end{equation} 
is the {\em Liouville operator} associated to the Hamiltonian dynamics, and 
\begin{equation} \label{eq:A}
 \mathcal{A} f(x,v) = \lambda  \E \left\{ f( \mathcal{S}(I,\xi) (x, v) ) - f(x,v) \right\}  
\end{equation}
is the {\em Andersen collision operator} where the expectation in \eqref{eq:A} is over the independent random variables $I \sim \Unif\{1,\dots,\m\}$ and $\xi  \sim \mathcal{N}(0,\beta^{-1})^{\n}$.


A key property of Andersen dynamics is that it leaves invariant the Boltzmann-Gibbs probability distribution 
\begin{equation} \label{eq:boltzmann_gibbs}
\PiBG(dx\,dv) \propto \exp(-\beta H(x,v))\, dx\, dv \;.
\end{equation}   
Indeed, since the Hamiltonian flow preserves both the Hamiltonian function $H$ and phase space volume (as a consequence of symplecticity), the Hamiltonian flow preserves $\PiBG$.    Moreover, since the position component is held fixed and the $i$th velocity component is drawn from the $v_i$-marginal of $\PiBG$, the velocity randomizations also preserve $\PiBG$.
This argument can be easily turned into a proof that $\PiBG$ is infinitesimally invariant in the sense that \[
\int_{\mathbb{R}^{2 \m \n}} \mathcal{G} f(z)  \PiBG(dz) = 0
\] for any compactly supported $C^1$ function $f: \mathbb{R}^{2 \m \n} \to \mathbb{R}$. To conclude that $\PiBG$ is an invariant measure (not just infinitesimally invariant) requires additional assumptions on $U$, e.g., it is sufficient to show that an appropriate Foster-Lyapunov drift condition holds; see \S3.5 of \cite{BoSa2017} for details.

\begin{remark}
In the case of one particle $\m=1$ with $\beta=1$, Andersen dynamics becomes exact randomized Hamiltonian Monte Carlo (xrHMC) which is geometrically ergodic under mild conditions on the potential energy $U$ \cite{BoSa2017}.  
\end{remark}

\begin{remark}
Andersen dynamics is related to second-order Langevin dynamics, but there are differences.  First, note that Andersen dynamics does not incorporate explicit dissipation or diffusion.  Second, although the velocity randomizations help ensure that Andersen dynamics is ergodic with respect to the Boltzmann-Gibbs probability distribution, they have the disadvantage of introducing jump discontinuities along the velocity of trajectories.  In contrast, the velocity of trajectories for second-order Langevin dynamics is continuous.  
\end{remark}


\subsection{Couplings for Andersen Dynamics} \label{sec:coupling}

A key tool in our analysis is a Markovian coupling $Y_t = ((X_t, V_t), (\tilde X_t, \tilde V_t))$ of two realizations of Andersen dynamics starting from different initial conditions.   To precisely define this coupling, introduce the following Hamiltonian flow on $\mathbb{R}^{4 \m \n}$
\begin{eqnarray} \label{coup_hamilt_op} 
	\phi^C_t((x,v),(\tilde x, \tilde v)) &:=& \left(\phi_t( x,  v), \phi_t(\tilde x,\tilde v)) \right) \qquad ( t\in [0,\infty ))
\end{eqnarray}
where $\phi_{t}$ is the Hamiltonian flow from \eqref{hamiltonian_op}.
Let $\gamma \ge 0$ be a parameter of the coupling whose precise value will be specified in an appropriate way in subsequent sections.  For $\mathsf{a} \in \mathbb{R}^{\n}$, $\mathsf{i} \in \{1, \dots, \m \}$, and $u \in (0,1)$, introduce   \begin{eqnarray} \label{coup_sub_op}
\mathcal{S}^C(i,\mathsf{a},u)((x,v),(\tilde x, \tilde v)) & := &  (\mathcal{S}(i,\mathsf{a})(x,v), \mathcal{S}(i, \tilde{\mathsf{a}}) (\tilde x, \tilde v))
\end{eqnarray} where $\mathcal{S}(\cdot, \cdot)$ is the mapping in \eqref{sub_op} and $\tilde{\mathsf{a}} = \Phi(\mathsf{a}, z_i, u)$ with $z_i = x_i - \tilde x_i$.  Here we have introduced the function $\Phi: \mathbb{R}^{\n} \times \mathbb{R}^{\n} \times (0,1) \to \mathbb{R}^{\n}$ defined by
 \begin{equation} \label{eq:tildexi}
\Phi(\mathsf{a},\mathsf{b},u) \ := \ \begin{dcases}
\mathsf{a} + \gamma \mathsf{b} & \text{if $u< \dfrac{\varphi_{\beta}(\mathsf{a} + \gamma \mathsf{b})}{\varphi_{\beta}(\mathsf{a})}$} \;, \\
\mathsf{a} - 2 (e_{\mathsf{b}} \cdot \mathsf{a}) e_{\mathsf{b}} & \text{else}  \;,
\end{dcases}
\end{equation}
where $e_{\mathsf{b}} = \mathsf{b}/ |\mathsf{b}|$ for $\mathsf{b} \ne 0$ and $e_0 = 0$. 

\medskip
A simple calculation gives the following estimates that will be used below.
\begin{lemma}\label{lem:rejp} 
Let $\mathsf{b} \in \mathbb{R}^{\n}$, and let
$\tilde \xi = \Phi(\xi,\mathsf{b}, \mathcal{U}  )$ where $\xi \sim \mathcal{N}(0,\beta^{-1})^{\n}$ and $\mathcal{U} \sim \Unif(0,1)$.  Then
\begin{align}
\label{ieq:rejpA}\mathbb P[\xi - \tilde \xi \neq -\gamma \mathsf{b} ] \ &\le \ \sqrt{\beta} \gamma  |\mathsf{b} |/\sqrt{2\pi} ,\qquad\qquad\text{and}\\
\label{ieq:rejpB}\mathbb E[|\xi |^2;\, \xi - \tilde \xi \neq -\gamma \mathsf{b} ] \ &\le \ (\n+1) \gamma  |\mathsf{b} |/\sqrt{2\pi \beta} \, .
\end{align}
\end{lemma}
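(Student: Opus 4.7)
Without loss of generality take $\mathsf{b}\neq 0$ and set $c:=\gamma|\mathsf{b}|$, $e_{\mathsf{b}}:=\mathsf{b}/|\mathsf{b}|$. By rotational invariance of $\varphi_\beta$, decompose $\xi=\xi_1 e_{\mathsf{b}}+\xi_\perp$, where $\xi_1:=e_{\mathsf{b}}\cdot\xi$ and $\xi_\perp:=\xi-\xi_1 e_{\mathsf{b}}$ are independent, with $\xi_1\sim\mathcal{N}(0,\beta^{-1})$ and $\E|\xi_\perp|^2=(\n-1)/\beta$. A direct computation gives $\varphi_\beta(\xi+\gamma\mathsf{b})/\varphi_\beta(\xi)=\exp(-\beta c\,\xi_1-\beta c^2/2)$, which depends only on $\xi_1$, so the rejection event $R:=\{\xi-\tilde\xi\neq -\gamma\mathsf{b}\}$ has conditional probability $\Pr(R\mid\xi)=(1-e^{-\beta c(\xi_1+c/2)})^{+}$ and is independent of $\xi_\perp$.

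For \eqref{ieq:rejpA}, let $\varphi_\beta^{(1)}$ denote the one-dimensional $\mathcal{N}(0,\beta^{-1})$ density. Integrating the conditional rejection probability yields
$$\Pr(R)=\int\bigl(\varphi_\beta^{(1)}(s)-\varphi_\beta^{(1)}(s+c)\bigr)^{+}ds.$$
Since the integrand is positive exactly on $\{s>-c/2\}$, a change of variable in the $\varphi_\beta^{(1)}(s+c)$ term collapses the right-hand side to $\int_{-c/2}^{c/2}\varphi_\beta^{(1)}(s)\,ds\le c\,\varphi_\beta^{(1)}(0)=c\sqrt{\beta/(2\pi)}$, which is \eqref{ieq:rejpA}.

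For \eqref{ieq:rejpB}, split $|\xi|^2=\xi_1^2+|\xi_\perp|^2$. Independence of $\xi_\perp$ from $R$, combined with \eqref{ieq:rejpA}, gives $\E[|\xi_\perp|^2;R]=\E|\xi_\perp|^2\cdot\Pr(R)\le (\n-1)c/\sqrt{2\pi\beta}$. For $\E[\xi_1^2;R]$, set $p(s):=(1-e^{-\beta c(s+c/2)})^{+}$ (a Lipschitz function) and apply the Gaussian integration-by-parts identity $\E[\xi_1 g(\xi_1)]=\beta^{-1}\E[g'(\xi_1)]$ with $g(s)=s\,p(s)$ to obtain
$$\E[\xi_1^2\,p(\xi_1)]=\beta^{-1}\bigl(\E[p(\xi_1)]+\E[\xi_1\,p'(\xi_1)]\bigr).$$
The first summand equals $\Pr(R)/\beta\le c/\sqrt{2\pi\beta}$. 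For the second, the completion-of-squares identity $e^{-\beta c(s+c/2)}\varphi_\beta^{(1)}(s)=\varphi_\beta^{(1)}(s+c)$ converts $\E[\xi_1\,p'(\xi_1)]$ into $\beta c\int_{c/2}^{\infty}(u-c)\varphi_\beta^{(1)}(u)\,du$, an explicit Gaussian integral equal to $(\sqrt\beta c/\sqrt{2\pi})e^{-\beta c^2/8}-\beta c^2\bar\Phi_\beta(c/2)\le \sqrt\beta c/\sqrt{2\pi}$, where $\bar\Phi_\beta$ denotes the $\mathcal{N}(0,\beta^{-1})$ survival function. Hence $\E[\xi_1\,p'(\xi_1)]/\beta\le c/\sqrt{2\pi\beta}$, and summing yields $\E[\xi_1^2;R]\le 2c/\sqrt{2\pi\beta}$, which combined with the bound on $\E[|\xi_\perp|^2;R]$ gives \eqref{ieq:rejpB}.

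The main delicacy is obtaining the constant $2$ in the estimate $\E[\xi_1^2;R]\le 2c/\sqrt{2\pi\beta}$: crude bounds on $\int_{-c/2}^{\infty}s^2(\varphi_\beta^{(1)}(s)-\varphi_\beta^{(1)}(s+c))\,ds$ leave residual $O(c^2)$ or $O(c^3)$ terms, and it is Gaussian integration-by-parts combined with completion of squares that isolates the linear-in-$c$ contribution cleanly.
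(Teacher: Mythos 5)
Your proof is correct, and it follows the same overall strategy as the paper's: decompose $\xi$ into components parallel and perpendicular to $\mathsf{b}$, treat the perpendicular contribution via independence and \eqref{ieq:rejpA}, and control the parallel second moment by integration by parts. The organization of the parallel-part computation, however, is genuinely different. The paper first symmetrizes by shifting $x \mapsto x - \tfrac12\gamma\mathsf b$, then uses the Hermite-type identity $|s|^2\phi(s) = \beta^{-2}\phi''(s) + \beta^{-1}\phi(s)$ for the one-dimensional Gaussian density $\phi$ and integrates by parts twice against the profile $(1-e^{-\beta\gamma|\mathsf b|s})$. You instead keep the integral centered at the origin and invoke Stein's lemma (Gaussian integration by parts) once with $g(s)=s\,p(s)$, where $p$ is the conditional rejection probability, then finish with the completion-of-squares identity $e^{-\beta c(s+c/2)}\varphi_\beta^{(1)}(s)=\varphi_\beta^{(1)}(s+c)$. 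Both routes reduce the problem to an explicit one-dimensional Gaussian integral and produce the exact same split $\beta^{-1}\Pr(R)+\beta^{-1}\E[\xi_1\,p'(\xi_1)]$, yielding the coefficient $2$ for the parallel part; your Stein-lemma formulation is a bit more transparent about where the two equal linear-in-$c$ contributions come from, whereas the paper's double integration-by-parts bundles them into one computation. Your closing remark about why a cruder bound would leave spurious $O(c^2)$ terms is consistent with the paper's reason for doing the IBP computation at all.
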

The lemma is a refinement of Lemma 3.7 in 
\cite{BoEbZi2020}. A self-contained proof is provided in Section \ref{proof:rejp}.

\medskip

Let $(N_t)_{t \ge 0}$ be a Poisson counting process with intensity $\lambda$ and let $(T_k)_{k \in \mathbb{N}}$ be the corresponding strictly increasing sequence of jump times; let $(I_k)_{k \in \mathbb{N}}$,  $(\xi_k)_{k \in \mathbb{N}}$, and $(\mathcal{U}_k)_{k \in \mathbb{N}}$ be independent sequences of i.i.d.~random variables $I_k \sim \Unif\{1,...,\m\}$, $\xi_k \sim \mathcal{N}(0,\beta^{-1})^{\n}$, and $\mathcal{U}_k \sim \Unif(0,1)$, all defined on a joint probability space.
\smallskip
  
With this notation, we define the following coupling for Andersen dynamics.  

\smallskip

\begin{definition}[Coupling for Andersen Dynamics]
\label{D:coupling}
Given $t>0$, $\lambda>0$, $\gamma \ge 0$, and an initial condition $y \in \mathbb{R}^{4 \m \n}$, define $T_0=0$, $\delta T_{k} = T_k - T_{k-1}$ for $k\ge 1$, $Y_0 = y$, and \begin{equation*}
Y_t \, := \, \phi^C_{t - T_{N_t}} \circ   \mathcal{S}^C(I_{N_t}, \xi_{N_t}, \mathcal{U}_{N_t}) \circ   \phi^C_{\delta T_{N_t}} \circ  \cdots \circ  \mathcal{S}^C(I_{1}, \xi_{1}, \mathcal{U}_{1}) \circ  \phi^C_{\delta T_{1}}  (Y_0)  \;.
\end{equation*} 
\end{definition}

\smallskip

The process $Y_t$ can also be defined piecewise.  In particular, the components of the coupling follow Hamiltonian dynamics in between two consecutive jump times, \[
Y_s = \phi^C_{s-T_{k-1}} ( Y_{T_{k-1}} )  \;,~~ \text{for $s \in [T_{k-1},T_k)$} \;. 
\]Moreover, at a jump time, $s=T_k$, the velocities of the $I_k$-th particles in the first and second components of the coupling process instantaneously change to $V_{T_k}^{I_k} = \xi_k$ and $\tilde V_{T_k}^{I_k} = \Phi(\xi_k,X_{T_k}^{I_k} - \tilde X_{T_k}^{I_k},\mathcal{U}_k)$ respectively, i.e., \[
Y_{T_k} = \mathcal{S}^C(I_k, \xi_k, \mathcal{U}_k)\ ( Y_{T_k-})
\] where $Y_{T_k-} =  \phi^C_{T_k-T_{k-1}} ( Y_{T_{k-1}} ) $.  We stress that $X_{T_k} = X_{T_k-}$, $\tilde X_{T_k} = \tilde X_{T_k-}$, and $V_{T_k}^j = V_{T_k-}^j$,  $\tilde V_{T_k}^j = \tilde V_{T_k-}^j$ for $j \in \{1, \dots, \m \} \setminus \{ I_k \}$.

\medskip

This coupling of Andersen dynamics is inspired by recently introduced couplings for Hamiltonian Monte Carlo \cite{BoEbZi2020} and second-order Langevin dynamics \cite{eberle2019couplings}.
 It is motivated by the observation that the free-streaming Hamiltonian dynamics is contractive for small time durations if the difference in the initial velocities is chosen negatively proportional to the difference in the initial positions \cite[Figure 1]{BoEbZi2020}.  In particular, the velocity randomization at a jump time $T_k$ is defined such that the difference in the velocities of the $I_k$-th particles satisfies $V_{T_k}^{I_k} - \tilde{V}_{T_k}^{I_k} = \xi_{I_k} - \tilde \xi_{I_k} = - \gamma ( X_{T_k}^{I_k} - \tilde{X}_{T_k}^{I_k})$ with maximal possible probability, and otherwise, a reflection coupling is applied, as illustrated in Figure~\ref{fig:coupling_of_velocities}.
 
\medskip

\begin{figure}[ht!]
\begin{center}
\includegraphics[width=0.49\textwidth]{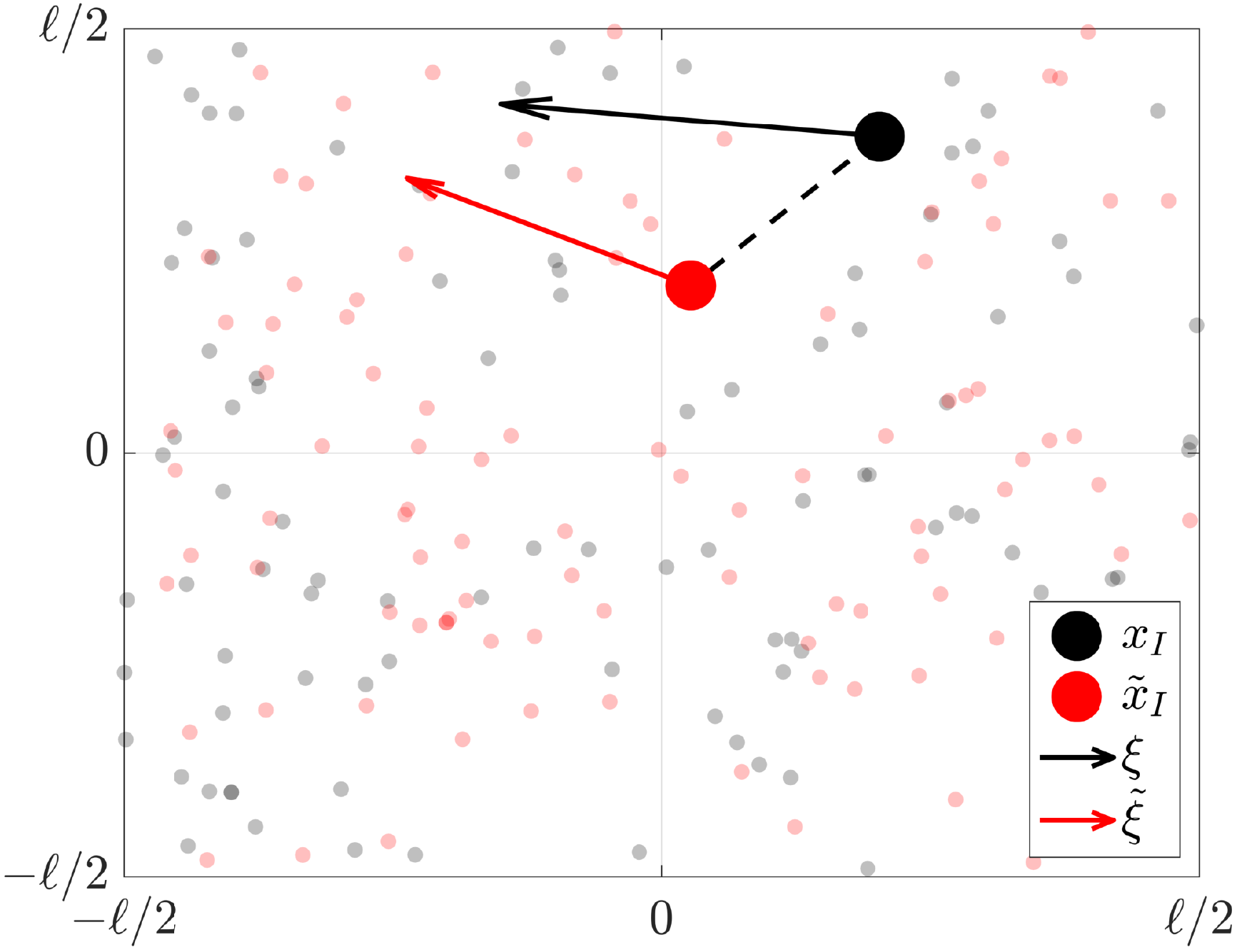}  \hfill
\includegraphics[width=0.49\textwidth]{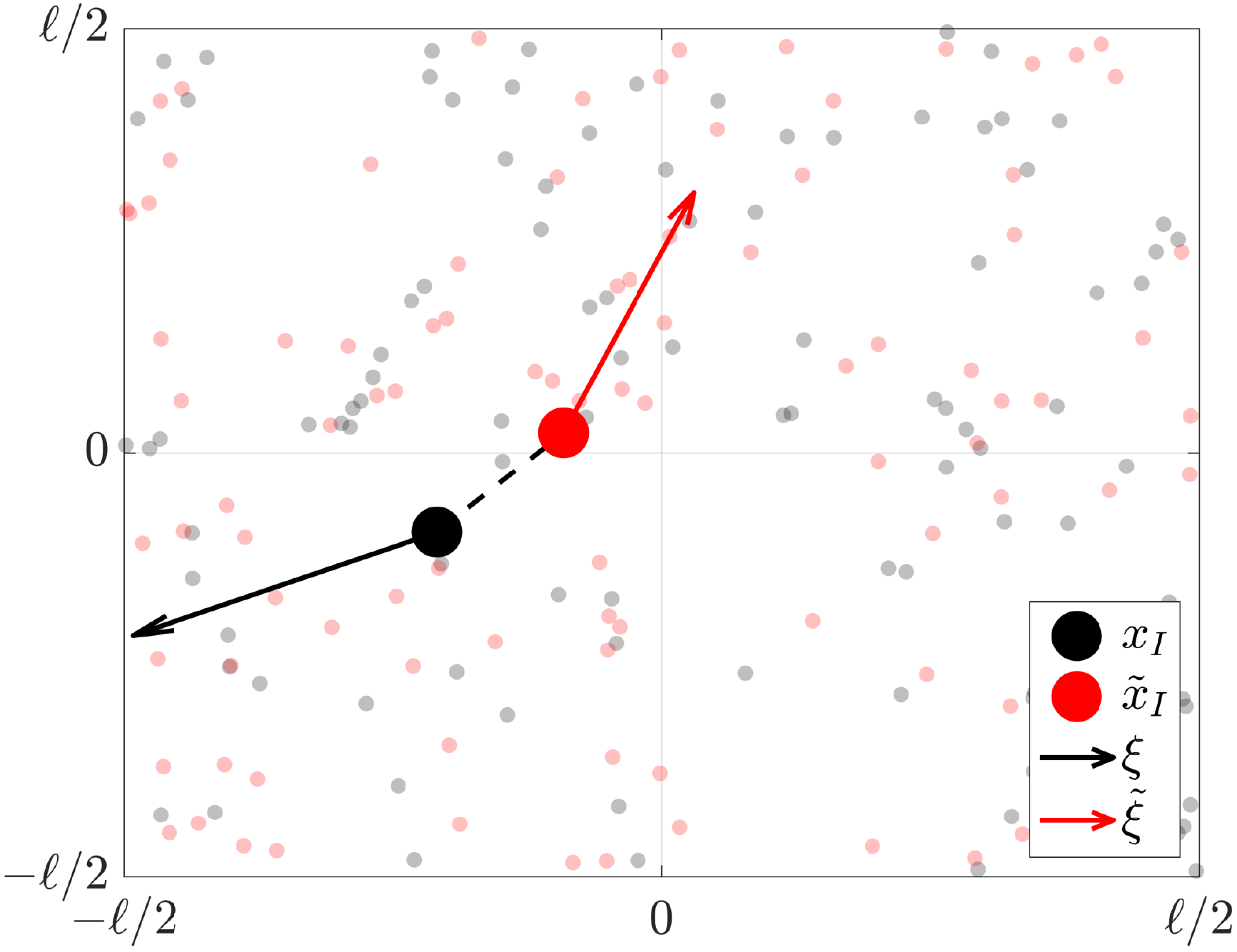}  \\
\end{center}
\begin{minipage}{\textwidth}
      \begin{minipage}[b]{0.49\textwidth}
      \centering
(a) $\tilde{\xi} = \xi + \gamma (x_I - \tilde x_I)$ 
\end{minipage}
\begin{minipage}[b]{0.49\textwidth}
\centering
(b) $\tilde{\xi} = \xi - 2 (e_{z} \cdot \xi) e_{z} $ 
\end{minipage}
\end{minipage}
\caption{ \small  Illustrations of a velocity randomization step where the difference in velocities of the $I$-th particles (enlarged dots), $\xi-\tilde \xi$, is:  (a) negatively proportional to the difference in their positions $z = x_I - \tilde x_I$; and (b) reflected about the hyperplane passing through the origin, orthogonal to $e_{z} = z / |z|$. 
 }
 \label{fig:coupling_of_velocities}
\end{figure}

The coupling process $Y_t$ is itself a PDMP on the state space $\mathbb{R}^{4 \m \n}$ with the following characteristics: \begin{itemize}
\item the vector field \begin{equation*} 
\mathfrak{X}^C((x,v),(\tilde x, \tilde v)) = ( v , - \nabla U(x) , \tilde v , - \nabla U(\tilde x) ) \;;
\end{equation*}
\item the (constant) jump rate given by the collision frequency $\lambda$; and,
\item the jump measure \begin{equation*} 
\begin{aligned}
& Q^C ( y ,  d y ^{\prime} ) =  \\
& \qquad \frac{1}{\m} \sum_{i=1}^{\m} \delta_{x}(d x^{\prime}) \delta_{\tilde x}(d \tilde x^{\prime}) 
Q^C_i( (v_i, \tilde v_i), (d v_i^{\prime} \; d \tilde v_i^{\prime}) ) \prod_{j \ne i} \delta_{v_j}(d v_j^{\prime}) \delta_{\tilde v_j}(d \tilde v_j^{\prime})    \;, \\
& \text{where}~ 
Q^C_i((v_i, \tilde v_i), (d v_i^{\prime} \; d \tilde v_i^{\prime}) ) = \left( \varphi_{\beta}(v_i^{\prime}) \wedge \varphi_{\beta}(v_i^{\prime} + \gamma z_i ) \right) \delta_{v_i^{\prime}+\gamma z_i}(d \tilde v^{\prime}_i)  d v_i^{\prime} \\
& \qquad\qquad + \left( \varphi_{\beta}(v_i^{\prime}) - \varphi_{\beta}(v_i^{\prime} + \gamma z_i) \right)^+  \delta_{v_i^{\prime} -2 (e_{z_i} \cdot  v_i^{\prime} ) e_{z_i}}(d \tilde v^{\prime}_i)  d v_i^{\prime}
\end{aligned}
\end{equation*} 
and where $e_{z_i} = z_i/|z_i|$ for $z_i \ne 0$ and $e_0=0$. 
\end{itemize} 

Since the coupling process is again a PDMP, the results in \cite{davis1984piecewise} show that it solves a local martingale 
problem for an extended generator 
\begin{equation} \label{eq:GC}
\mathcal{G}^C_{\gamma}  =   \mathcal{L}^C   + \mathcal{A}^C_{\gamma}  
\end{equation} 
which is the sum of the Liouville operator $\mathcal{L}^C$ for the Hamiltonian vector field $\mathfrak{X}^C$ and
a velocity randomization operator $\mathcal{A}^C_{\gamma}$, and whose domain
$\mathcal{D}(\mathcal{G}^C_{\gamma})$ consists of continuously differentiable functions on $\mathbb{R}^{4 \m \n}$.
For a function $F: \mathbb{R}^{4 \m \n} \to \mathbb{R}$ that is differentiable at $y$, the Liouville operator $\mathcal{L}^C$ is given by
\begin{equation} \label{eq:LC}
\mathcal{L}^C   F (y) = \mathfrak{X}^C(y) \cdot \nabla F(y) \;.
\end{equation}
The action of the coupled velocity randomization operator $\mathcal{A}^C_{\gamma}$ on a function $F: \mathbb{R}^{4 \m \n} \to \mathbb{R}$ is defined as
  \begin{eqnarray} \label{eq:AC}
 \mathcal{A}^C_{\gamma} F(y)  &=& \lambda \E \left\{  F(\mathcal{S}^C(I,\xi,\mathcal{U}) y) - F(y) \right\}\\
\nonumber & =& \lambda \int_{\mathbb{R}^{4 \m \n}} \left( F(y^{\prime}) -  F(y) \right)  Q^C(y, d y^{\prime}) \;.
\end{eqnarray}
where 
the expectation is taken over the independent random variables $I \sim \Unif\{1,\dots,\m\}$, $\xi \sim \mathcal{N}(0,\beta^{-1})^{\n}$ and $\mathcal{U} \sim \Unif(0,1)$.

It can be easily verified that the process $(Y_t)_{t\ge 0}$ is indeed a coupling of two copies of Andersen dynamics. Indeed, by uniqueness of the local martingale problem for the Andersen process, it is sufficient to check that $\mathcal{G}^C_{\gamma} F(y)$ reduces to $\mathcal{G} F(x,v)$ or $\mathcal{G} F(\tilde x,\tilde v)$  for 
functions independent of the first or second component of $y=((x,v), (\tilde x, \tilde v))$, respectively. For such functions, it immediately follows that $\mathcal{L}^C F = \mathcal{L} F$, and using a similar calculation to the one performed in Section 2.3.2 of \cite{BoEbZi2020},  $\mathcal{A}^C_{\gamma} F = \mathcal{A} F$;  hence, $\mathcal{G}^C_{\gamma} F = \mathcal{G} F$.

\begin{remark}[Synchronous coupling] \label{rmk:sync}
When $\gamma=0$ in Definition~\ref{D:coupling} the velocities of the $I_k$-th particles are synchronously randomized, i.e., $V_{T_k}^{I_k} = \tilde{V}_{T_k}^{I_k} = \xi_k$.  For the corresponding generators, we write $\mathcal{A}^C_{sync} = \mathcal{A}^C_0$ and $\mathcal{G}^C_{sync} = \mathcal{G}^C_{0}$.  By itself, a synchronous coupling is insufficient to obtain contractivity for non-strongly-convex potentials.  
\end{remark}
 

 \begin{figure}[t!]
\begin{center}
\includegraphics[width=0.3\textwidth]{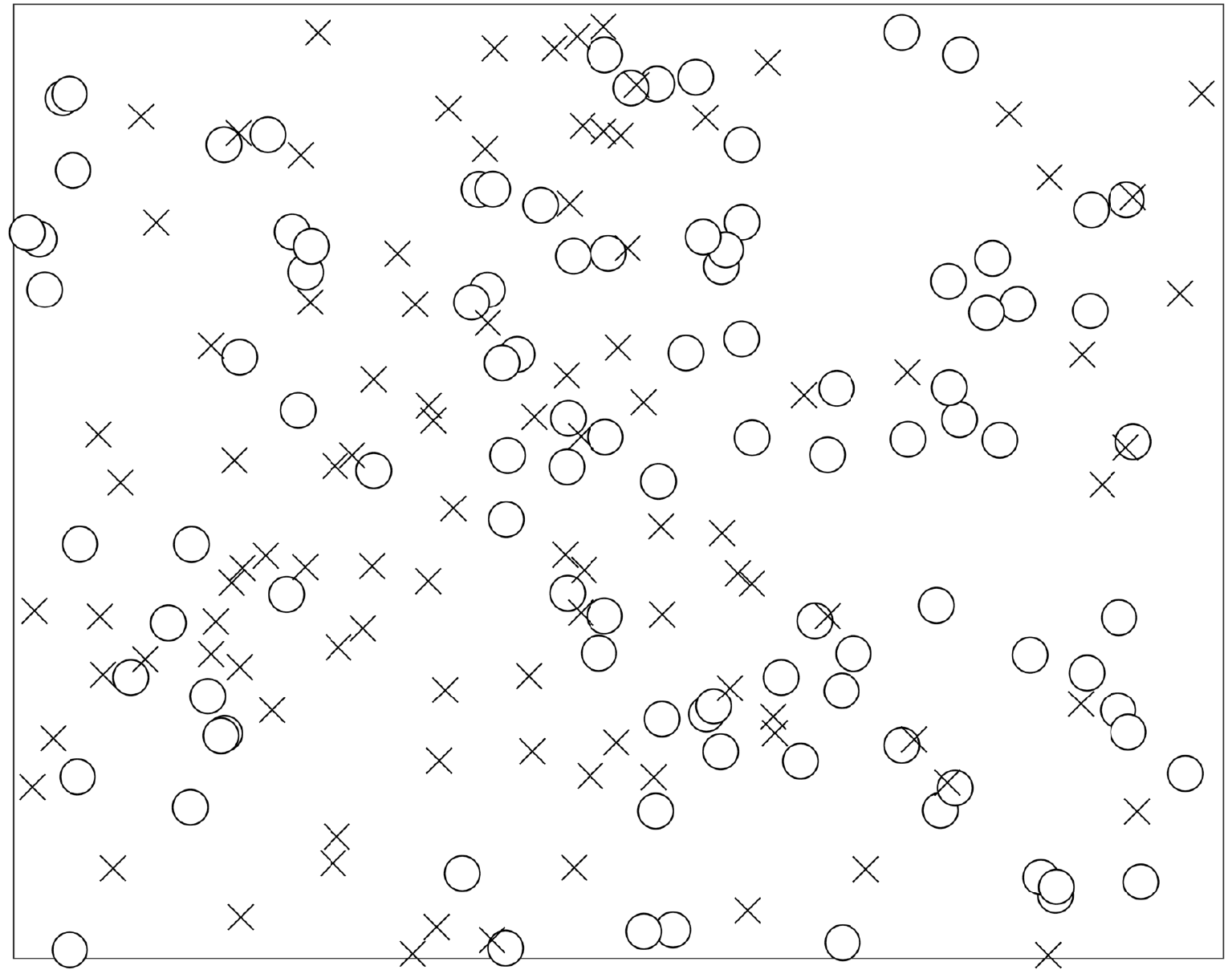}  \hfill
\includegraphics[width=0.3\textwidth]{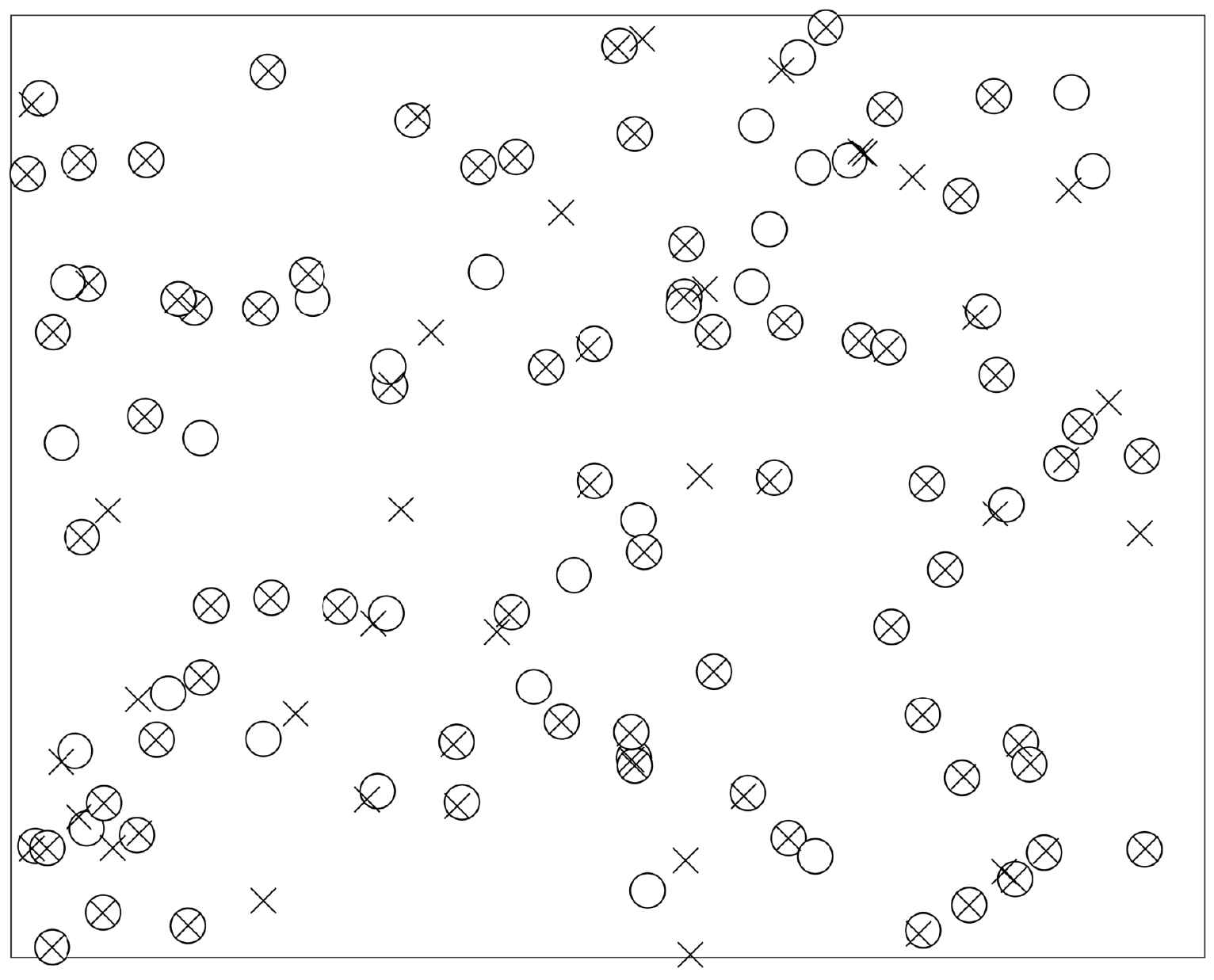} 
\hfill
\includegraphics[width=0.3\textwidth]{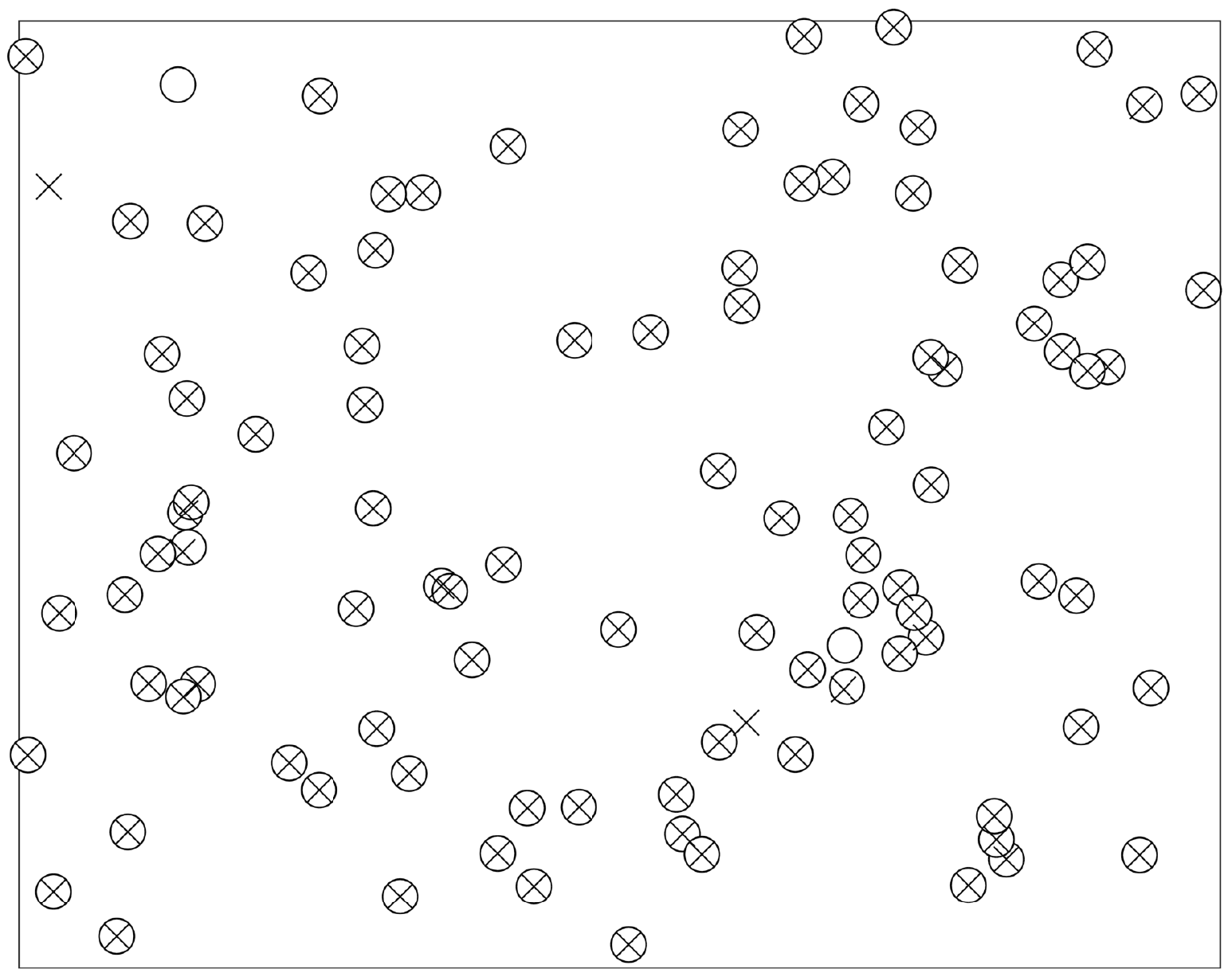} 
\\
\end{center}
\begin{minipage}{\textwidth}
      \begin{minipage}[a]{0.32\textwidth}
      \centering
(a) $t=1$
\end{minipage}
\begin{minipage}[b]{0.33\textwidth}
\centering
(b) $t=4$
\end{minipage}
\begin{minipage}[c]{0.32\textwidth}
\centering
(c) $t=8$
\end{minipage}
\end{minipage}
\caption{ \small  Snapshots at the indicated times along a realization of the coupling for Andersen dynamics on $(\mathbb{T}^{2 \m}_{\ell} \times \mathbb{R}^{2 \m})^2$ in the free-streaming case with $\m=100$, $\ell=1$, and $\beta=1$.  
 }
 \label{fig:coupling_snapshots}
\end{figure}


\begin{figure}[ht!]
\begin{center}
\includegraphics[width=0.49\textwidth]{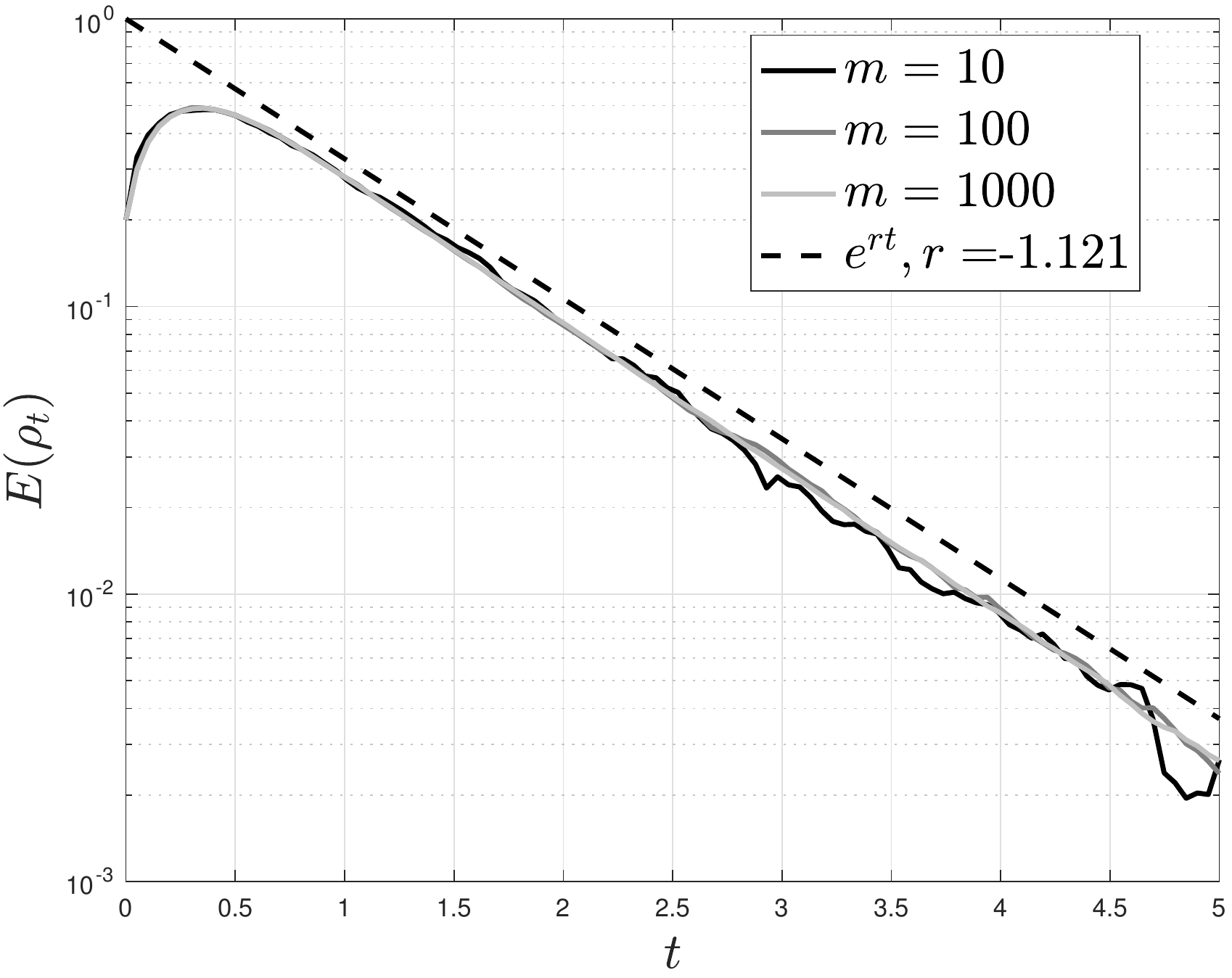}  \hfill
\includegraphics[width=0.49\textwidth]{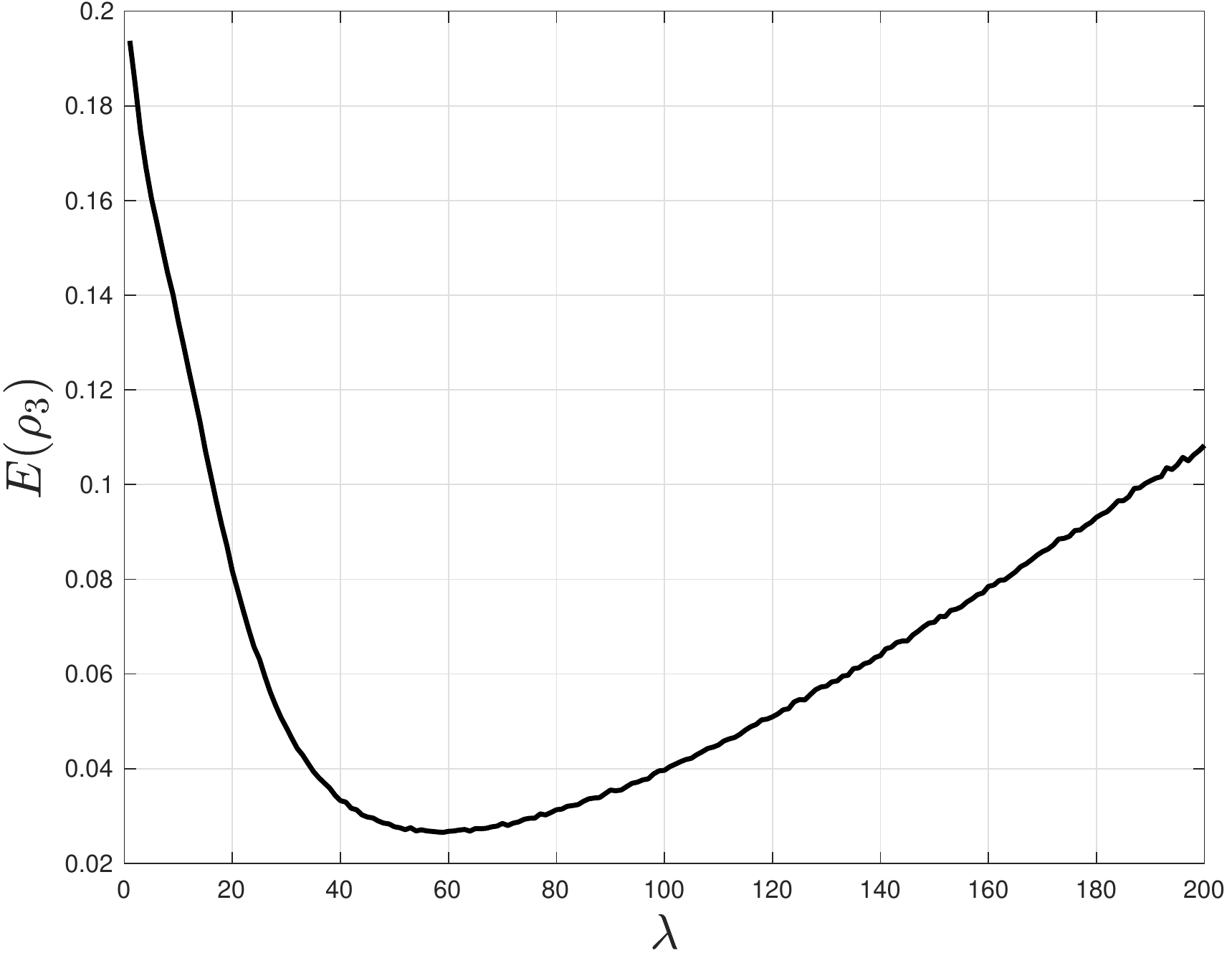}  \\
\end{center}
\begin{minipage}{\textwidth}
      \begin{minipage}[b]{0.49\textwidth}
      \centering
(a) $\lambda/\m=6$
\end{minipage}
\begin{minipage}[b]{0.49\textwidth}
\centering
(b) $\m=10$
\end{minipage}
\end{minipage}
\caption{ \small Evolution of a Monte Carlo estimate of $\E(\rho_t)$ using $10^5$ realizations of the coupling for Andersen dynamics on $(\mathbb{T}^{\m}_{\ell} \times \mathbb{R}^{\m})^2$ where $\ell=1$, $U\equiv0$, $\beta=1$, $\rho_t = (1/\m) \sum_{i=1}^{\m} \sqrt{ \zeta_i(Z_t,W_t)^2+(W_t^i)^2 } $, and  $\gamma=1/(\ell/2+\m/\lambda)$.  This choice of $\gamma$ is motivated by Thm.~\ref{TOR}. In (a), $\lambda/\m$ is fixed while $\m$ is increased from 10 to 1000; note that the observed convergence rate is dimension-free and consistent with Thm.~\ref{TOR} which implies contractivity with respect to an equivalent metric.   In (b), $\E(\rho_3)$ is plotted as a function of $\lambda$; note that $\lambda$ in (a) is approximately the minimizer of $\E(\rho_3)$. 
 }
 \label{fig:coupling}
\end{figure}

\subsection{Andersen dynamics on a torus} 

Molecular dynamics simulations routinely employ periodic boundary conditions \cite{AlTi1987,FrSm2002,LeRoSt2010,Gromacs19,Amber19}.  In particular, the configuration space of the molecular system is typically a flat torus $\mathbb{T}_{\ell}^{\m \n}$. Here  $\mathbb{T}_{\ell} =\mathbb{R}/ ( \ell \mathbb{Z})$ denotes the circle with circumference $\ell>0$. 
The canonical projection from the covering space $\mathbb{R}^{\m \n}$ to the torus $\mathbb{T}_{\ell}^{\m \n}$ is denoted by $\pi$, and $\tau_z(x)\in \mathbb{T}_{\ell}^{\m \n}$ denotes the translation of
a point $x\in\mathbb{T}_{\ell}^{\m \n} $ by a tangent vector $z\in\mathbb{R}^{\m \n} $. 

\medskip

Let $U \in C^2(\mathbb{T}^{\m \n}_{\ell})$ satisfy $U(x)\ge 0$ for all $x\in\mathbb{T}_{\ell}^{\m \n}$. Andersen dynamics on the torus ${T}_{\ell}^{\m \n}$ with potential $U$ is the PDMP with state space $ {T}_{\ell}^{\m \n}\times \mathbb{R}^{\m \n}$ defined by Definition \ref{D:andersen}, where $\phi_t$ is now the flow of Hamiltonian dynamics \eqref{eq:Hamdyn} on the torus, and $\mathcal S$ is again defined by \eqref{sub_op} as above. The process
can also be obtained by projection from Andersen dynamics on Euclidean space. Indeed,
let $\hat U$ denote the periodic function in $C^2(\mathbb{R}^{\m \n})$ defined 
by $\hat U(x)=U(\pi (x))$ for all $x$. Then the
Andersen process $(X_t,V_t)$ on the torus with inital 
condition $(x_0,v_0)\in {T}_{\ell}^{\m \n}\times \mathbb{R}^{\m \n}$ is given by 
$X_t=\pi (\hat X_t)$ and $\hat V_t=V_t$, where
$(\hat X_t,\hat V_t)$ is the Andersen process 
on $\mathbb R^{\m\n} \times \mathbb{R}^{\m \n}$ with initial condition 
$(\hat x_0,v_0)$ for an arbitrary $\hat x_0\in \pi^{-1}(x_0)$.

\begin{figure}[t]
\centering
\begin{minipage}{\textwidth}
\begin{minipage}{0.45\textwidth}
\begin{center}
\begin{tikzpicture}[scale=1.0]
    \draw[draw=blue,->-=.5,thick] (2,-1) to (0,-1);
        \draw[draw=blue,->-=.5,thick] (0,-1) to (-2,-1);
      \draw[->] (-2.5,0) -- (2.5,0) node[right] {\large
$\zeta$};
      \draw[->] (0,-1.5) -- (0,1.5) node[above] {\large
 $w$};
      \draw[-] (-2,-0.1) -- (-2,0.1) node[below=0.15cm] {\normalsize
$-\ell/2$};
      \draw[-] (2,-0.1) -- (2,0.1) node[below=0.15cm] {\normalsize
$\ell/2$};
        \filldraw[color=blue,fill=blue] (2.0,-1) circle (0.07);
        \filldraw[color=red,fill=white] (-2.0,-1) circle (0.07); 
    \end{tikzpicture}
    \end{center}
\end{minipage}
\begin{minipage}{0.45\textwidth}
\begin{center}
\begin{tikzpicture}[scale=1.0]
      \draw[->] (0,0) -- (5.5,0) node[right] {\large
$t$};
      \draw[->] (0,-1.5) -- (0,1.5) node[above] {\large
$\zeta_t$};
      \draw[-] (-0.1,1) -- (0.1,1) node[left=0.2cm] {\normalsize
$\ell/2$};
      \draw[-] (-0.1,-1) -- (0.1,-1) node[left=0.2cm] {\normalsize
$-\ell/2$};
            \filldraw[color=blue,fill=blue] (0,1) circle (0.07);
\foreach \a in {0,...,4}{
            \draw[scale=1.0,domain=(\a+0.001):((\a+1)),variable=\x,blue,thick] plot[samples=100] ({\x},{1-2*\x-2*floor((1-2*\x+1)/2)});
            \draw[dashed,blue] ({(\a+1)},-1) -- ({(\a+1)},1);
            \filldraw[color=blue,fill=blue] ({(\a+1)},1) circle (0.07);
            \filldraw[color=red,fill=white] ({(\a+1)},-1) circle (0.07);
         }        
    \end{tikzpicture}
    \end{center}
\end{minipage}
\end{minipage}
\begin{minipage}{\textwidth}
\begin{minipage}[b]{0.48\textwidth}
      \centering
(a)  
\end{minipage}
\begin{minipage}[b]{0.48\textwidth}
      \centering
(b) 
\end{minipage}
\end{minipage}
\caption{\small  Plots of $\zeta_t=\zeta(z_t,w_t)$ initially at $(z_0,w_0)=(\ell/2,-1)$ with constant $w_t$ in (a) phase space  and (b) as a function of time. Note that  $t \mapsto \zeta_{t}$ is a c\`{a}dl\`{a}g trajectory. }
  \label{fig:zeta}
\end{figure}
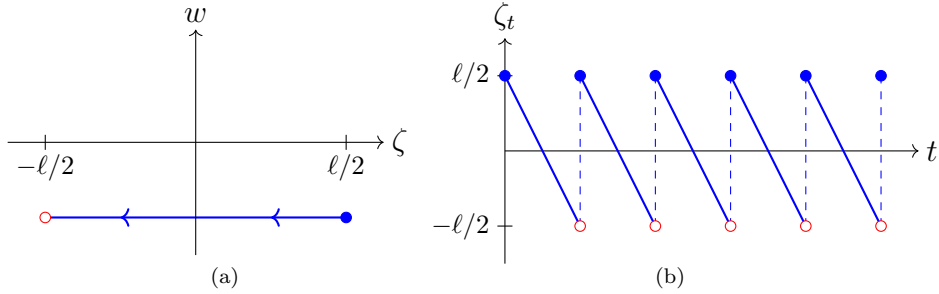

\subsection{Coupling for Andersen dynamics on a torus} 

We now introduce a coupling for two copies of
the Andersen process on the torus. The coupling
is a piecewise deterministic Markov process 
$((X_t,V_t),(\tilde X_t,\tilde V_t))$ with state space 
$ \left(\mathbb{T}_{\ell}^{\m \n}\times \mathbb R^{\m \n}\right)^2$. Although in spirit, the construction is similar
to the construction of a coupling for Andersen dynamics on euclidean space in Section \ref{sec:coupling}, some technical
difficulties arise in the torus case. Therefore, we require
a slightly different setup.

In order to
construct the coupling process, we consider another PDMP 
$Y_t=(X_t,V_t,Z_t,W_t)$ with state space
$\mathbb{T}_{\ell}^{\m \n}\times \mathbb R^{\m \n}\times \mathbb R^{\m \n}\times \mathbb R^{\m \n}$. The coupling is obtained from this process by setting
\begin{equation}\label{eq:couplingprojection}
((X_t,V_t),(\tilde X_t,\tilde V_t)):=\pi^C(Y_t),
\end{equation}
where $\pi^C:\mathbb{T}_{\ell}^{\m \n}\times \mathbb R^{3\m \n}\to (\mathbb{T}_{\ell}^{\m \n}\times \mathbb R^{\m \n})^2$
is the projection map defined by
\begin{equation}\label{eq:piC}
 \pi^C(x,v,z,w)=\left( (x,v),(\tau_{-z}(x),v-w)\right) .  
\end{equation}
Thus $W_t=V_t-\tilde V_t$ and $\tilde X_t=\tau_{-Z_t}(X_t)$,
i.e., $W_t$ and $Z_t$ correspond to the differences between the coupling components.

Let $\phi_t^C=(x_t,v_t,z_t,w_t)$
denote the flow on $\mathbb{T}_{\ell}^{\m\n} \times \mathbb{R}^{3\m\n}$ of the ODE
\begin{equation}
\label{odes:tor}
\begin{aligned}
	\frac{d}{dt} x_t & \ = \ v_t \;, \qquad &  \frac{d}{dt} z_t & \ = \ w_t \;, \\ 
 \frac{d}{dt} v_t & \ = \ - \nabla U(x_t) \;, \qquad & \frac{d}{dt} w_t & \ = \  \nabla U(\tau_{-z_t}(x_t)) - \nabla U(x_t)   \;.
	\end{aligned}
\end{equation}
For $(z,w)\in  \mathbb{R}^{2\m\n}$, we also define $\zeta (z,w) \in [-\ell /2,\ell /2]^{\m\n}$ by
\begin{equation} \label{zeta_i}
 \zeta_{i,j}(z,w) \ = \ 
 \begin{cases}
 z_{i,j} - \lfloor (z_{i,j} + \ell/2) / \ell \rfloor \ell & \text{if $z_{i,j}\not\in\ell/2+\ell\mathbb Z$} \; ,\\
 
 \ell/2 & \text{if $w_{i,j}<0$ and $z_{i,j}\in\ell/2+\ell\mathbb Z$} \;, \\
  -\ell/2 & \text{if $w_{i,j}\ge 0$ and $z_{i,j}\in\ell/2+\ell\mathbb Z$} \;.
 \end{cases} 
\end{equation}
One should think of $\zeta =\zeta (z,w)$ as a minimal difference vector between 
the corresponding components $x$ and $\tilde x$ on the torus. In particular,
$\zeta_{i,j}\equiv z_{i,j}\text{ mod }\ell$ for all $i,j$, and thus
\begin{equation}
    \label{eq:xtxzeta}
    x=\tau_z(\tilde x)=\tau_\zeta (\tilde x) \, .
\end{equation}
The motivation
for the special definition of $\zeta_{i,j}$ for $z_{i,j}\in \ell /2+\ell\mathbb Z$ is that it ensures that if $(x_t,v_t,z_t,w_t)$
is a solution of \eqref{odes:tor} then $t\mapsto \zeta_{i,j} (z_t,w_t)$ is c\`{a}dl\`{a}g (right continuous with left limits)
for all $t$ such that $w_{t}^{i,j} \neq 0$, see Figure \ref{fig:zeta}.
This will imply that the coupling distance introduced further 
below is also a c\`{a}dl\`{a}g function of $t$, see Lemma
\ref{lem:cadlag}.

\medskip

The process $(Y_t)$ is now defined by Definition \ref{D:coupling} above where $\phi_t^C$ is the flow of \eqref{odes:tor}, and
\begin{eqnarray}
\label{coup_sub_op_tor}
\mathcal{S}^C(i,\mathsf{a},u)(x,v,z,w) & = &  (\mathcal{S}(i,\mathsf{a})(x,v), \mathcal{S}(i, \mathsf{a}-\tilde{\mathsf{a}}) (z, w)) \\
\nonumber \text{with}&&\tilde{\mathsf{a}} = \Phi(\mathsf{a}, \zeta_i(z,w), u).
\end{eqnarray} 
Again, $(Y_t)$
is a piecewise deterministic Markov process with generator given by
\eqref{eq:GC}, \eqref{eq:LC} and \eqref{eq:AC},
where now the vector field generating the deterministic flow is $\mathfrak{X}^C(x,v,z,w)=(v,-\nabla U(x),w,\nabla U(\tau_{-z}(x))-\nabla U(x))$, $\mathcal S^C$ is defined by 
\eqref{coup_sub_op_tor}, and the jump measure $Q^C$ is adapted correspondingly.



\section{Main Results}

We now apply the couplings introduced above to derive contraction results and bounds on Wasserstein distances to the invariant measure for Andersen dynamics. We first consider a strongly convex potential energy function on $\mathbb{R}^{\m \n}$. In this case, relatively precise bounds can be derived by 
synchronous coupling. Then we consider Andersen dynamics on a high dimensional torus, which is a common setup in molecular dynamics. In that
case, synchronous coupling can not be applied since the potential energy function is not convex. In general, phase transitions can cause slow mixing as the dimension goes to infinity. Using the couplings 
introduced above, we are able to show that rapid mixing still holds 
for weak interactions between the particles.

\subsection{Andersen dynamics for weakly anharmonic molecular systems}

Here we consider potentials $U(x)$ that satisfy the following assumption.
\begin{assumption} \label{hypo:U}
The potential energy is weakly anharmonic, i.e., \begin{equation} U(x) \ = \ \frac{1}{2}  x^T \mathcal{C}^{-1} x + G(x) \;, \quad \text{for all $x \in \mathbb{R}^{\m \n}$} \;, \label{eq:U_wah}
\end{equation} where $\mathcal{C}$ is an $\m \n \times \m \n$ symmetric positive definite matrix; and
the perturbation $G(x)$ is a convex, continuously differentiable and $L_G$-gradient Lipschitz function, i.e.,  there exists $L_G \ge 0$ such that \begin{eqnarray}    	\label{eq:wah_lip}    	    	|\nabla G(x)-\nabla G(\tilde x)|  & \leq &   L_G |x-\tilde x| \;, \quad \text{$x, \tilde x \in \mathbb{R}^{\m \n}$} \;.    \end{eqnarray}  
\end{assumption}
Any $K$-strongly convex, continuously differentiable and gradient Lipschitz function $U(x)$ can be put in the form of \eqref{eq:U_wah} with $\mathcal{C} = K^{-1} \mathbf{1}_{\m \n}$ where $\mathbf{1}_{\m \n}$ is the $\m \n \times \m \n$ identity matrix and $G(x) = U(x) - K |x|^2 / 2$.  Moreover, it follows from this assumption that $U(x)$ is itself strongly convex \begin{equation}
    \label{eq:wah_sc}
(\nabla U(x) - \nabla U(\tilde x)) \cdot (x - \tilde x) \ge \sigma_{max}^{-2} |x - \tilde x|^2 \quad \text{for all $x, \tilde x \in \mathbb{R}^{\m \n}$} \;,
\end{equation}
where $\sigma_{max}^2$ is the largest eigenvalue of $\mathcal{C}$.  Here we used the convexity of $G(x)$ which implies that $(\nabla G(x) - \nabla G(\tilde x)) \cdot (x - \tilde x) \ge 0$.  
 The contraction result given below uses a synchronous coupling of velocities to exploit the convexity of the perturbation $G(x)$; see Remark~\ref{rmk:sync}. Let  
$$H_0(x,v):=(1/2) (|v|^2 + x^T \mathcal{C}^{-1} x)$$ 
be the unperturbed Hamiltonian. The Hamiltonian of the weakly anharmonic system is $H(x,v) = H_0(x,v) + G(x)$.  In terms of $H_0$, define the metric $\rho: \mathbb{R}^{4 \m \n} \to \mathbb{R}^+$ by
\begin{align} 
\rho(y)^2 \ &:= \ H_0(z(y),w(y)) + \frac{\lambda}{4 \m} \, z(y) \cdot w(y) + \frac{\lambda^2}{8 \m^2} \, |z(y)|^2 \label{eq:rho} \\
\ &  = \ \begin{pmatrix} z(y) & w(y) \end{pmatrix} \G \begin{pmatrix} z(y) \\ w(y) \end{pmatrix} \;, ~~ \G \ := \ \begin{bmatrix} \frac{\lambda^2}{8 \m^2} \mathbf{1}_{\m \n} + \frac{1}{2} \mathcal{C}^{-1} & \frac{\lambda}{8 \m} \mathbf{1}_{\m \n} \\
\frac{\lambda}{8 \m} \mathbf{1}_{\m \n} & \frac{1}{2} \mathbf{1}_{\m \n} \end{bmatrix} \;, 
\label{eq:rhoG}
\end{align}
where for $y=(x,v,\tilde x,\tilde v)$, we set $z(y) = x - \tilde x$ and $w(y) = v - \tilde v$.
Note that $\rho(y)$ only depends on $\mathcal{C}$ and the intensity of the velocity randomizations per particle $\lambda/\m$.  Moreover, by completing the square in \eqref{eq:rho}, it is easy to show that $\rho(y)^2$ is positive definite. 

In the sequel, we will sometimes write the $y$ dependence in $z$, $w$, $\rho$, etc. and sometimes suppress it in the notation, depending on what is more convenient.  Let $(p_t)_{t \ge 0}$ denote the transition semigroup of Andersen dynamics, and for all probability measures $\mu, \nu$ on $\mathbb{R}^{2 \m \n}$ let $\mathcal{W}_2(\mu, \nu)$ denote the standard 2-Wasserstein distance.

\begin{theorem}\label{WAH}
Suppose that Assumption~\ref{hypo:U} holds and $\lambda>0$ satisfies
\begin{equation} \label{eq:lambcond_wah}
{\lambda}/{\m} \ \ge \ 4 L_G \sigma_{max} \;.
\end{equation}
Then 
\begin{equation}\label{WAH1}
	\mathcal{G}^C_{sync} \, \rho^2  \leq - c \, \rho^2  \, ,\quad\text{where}\quad
 c :=  \frac{\lambda}{\m} \min\left( \frac{1}{8},  \frac{8}{5} \frac{\m^2}{\sigma_{max}^2 \lambda^2}  \right) \, .
\end{equation}
Thus, the process $t\mapsto e^{ c t} \rho (Y_t)^2$ is a nonnegative supermartingale, and \begin{equation} \label{eq:W2_wah}
    \mathcal{W}_2(\mu p_t, \nu p_t) \ \le \ \varkappa(\G)^{1/2} e^{-c t/2} \mathcal{W}_2(\mu, \nu)
\end{equation}
where $\varkappa(\G)$ is the condition number of the matrix $\G$.  
\end{theorem}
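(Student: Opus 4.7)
The plan is to apply $\mathcal{G}^C_{sync}$ directly to the quadratic form $\rho^2$ and then convert the resulting pointwise contraction into the Wasserstein bound through the condition number $\varkappa(\G)$. Writing $z=x-\tilde x$, $w=v-\tilde v$, and $g := \nabla G(x)-\nabla G(\tilde x)$, the difference dynamics under the coupled Hamiltonian flow is $\dot z = w$, $\dot w = -\mathcal{C}^{-1}z - g$, and a synchronous velocity-randomization at particle $I$ sends $w_I\to 0$ while leaving $z$ invariant. A direct chain-rule computation from \eqref{eq:rhoG} yields
\begin{align*}
\mathcal{L}^C \rho^2 \ &=\ \tfrac{\lambda}{4\m}|w|^2 - \tfrac{\lambda}{4\m}z^T\mathcal{C}^{-1}z + \tfrac{\lambda^2}{4\m^2}z\cdot w - w\cdot g - \tfrac{\lambda}{4\m}z\cdot g,\\
\mathcal{A}^C_{sync}\rho^2\ &=\ -\tfrac{\lambda}{2\m}|w|^2 - \tfrac{\lambda^2}{4\m^2}z\cdot w.
\end{align*}
Upon adding, the $\tfrac{\lambda^2}{4\m^2}z\cdot w$ contributions cancel exactly---this is precisely why the cross term $\tfrac{\lambda}{4\m}z\cdot w$ was placed into the definition of $\rho^2$---leaving
\[\mathcal{G}^C_{sync}\rho^2 \ =\  -\tfrac{\lambda}{4\m}\bigl(|w|^2 + z^T\mathcal{C}^{-1}z\bigr) - w\cdot g - \tfrac{\lambda}{4\m}z\cdot g.\]

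Convexity of $G$ forces $z\cdot g\ge 0$, so the last term is nonpositive and can be dropped. To control $-w\cdot g$, I would apply Young's inequality with parameter $\lambda/(4\m)$ together with $|g|\le L_G|z|$, producing $|w\cdot g|\le \tfrac{\lambda}{8\m}|w|^2 + \tfrac{2\m L_G^2}{\lambda}|z|^2$. Assumption \eqref{eq:lambcond_wah} is equivalent to $L_G^2 \le \lambda^2/(16\m^2\sigma_{max}^2)$, so the $|z|^2$ coefficient above collapses to at most $\tfrac{\lambda}{8\m\sigma_{max}^2}$. Combined with $z^T\mathcal{C}^{-1}z \ge |z|^2/\sigma_{max}^2$, this yields the clean bound
\[\mathcal{G}^C_{sync}\rho^2 \ \le\ -\tfrac{\lambda}{8\m}|w|^2 - \tfrac{\lambda}{8\m\sigma_{max}^2}|z|^2.\]
Matching the right-hand side against $-c\rho^2$ then reduces to the matrix inequality $\operatorname{diag}\bigl(\tfrac{\lambda}{8\m\sigma_{max}^2}\mathbf{1},\tfrac{\lambda}{8\m}\mathbf{1}\bigr)\succeq c\,\G$ on $\R^{2\m\n}$. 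Diagonalizing $\mathcal{C}$ reduces this to a family of $2\times 2$ PSD problems indexed by the eigenvalues of $\mathcal{C}$, with $\sigma_{max}^2$ the worst case; the two branches of the min defining $c$ correspond to whether the $|w|^2$ or the $|z|^2$ constraint binds, i.e.\ to whether $(\lambda/\m)\sigma_{max}$ is small or large, and a Schur-complement computation pins down the constants $1/8$ and $8/5$.

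Once \eqref{WAH1} is established, the nonnegative supermartingale property of $t\mapsto e^{ct}\rho(Y_t)^2$ follows from Dynkin's formula applied to the local martingale problem for $\mathcal{G}^C_{\gamma}$ recalled in Section~\ref{sec:coupling}, giving $\E[\rho(Y_t)^2]\le e^{-ct}\rho(Y_0)^2$. For the Wasserstein conclusion I would use the equivalence $\rho(y)^2 \asymp |z|^2+|w|^2$ with ratio $\varkappa(\G)$ to deduce $\E[|X_t-\tilde X_t|^2 + |V_t-\tilde V_t|^2] \le \varkappa(\G)\,e^{-ct}(|z_0|^2+|w_0|^2)$; since $(X_t,V_t,\tilde X_t,\tilde V_t)$ is a bona fide Markovian coupling of two copies of Andersen dynamics, taking square roots and optimizing over initial couplings of $\mu$ and $\nu$ yields \eqref{eq:W2_wah}. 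The main obstacle is the matrix inequality producing the exact $c$: the generator computation and the Young-inequality step are routine, but recovering both branches of the min with the stated constants requires the Young parameter, the cross-term coefficient in $\rho^2$, and the $2\times 2$ Schur complement to be coordinated precisely, leaving essentially no slack in the hypothesis $\lambda/\m\ge 4L_G\sigma_{max}$.
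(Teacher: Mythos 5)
Your generator computation is correct and matches the paper exactly: the formulas for $\mathcal{L}^C\rho^2$ and $\mathcal{A}^C_{sync}\rho^2$, the exact cancellation of the $\tfrac{\lambda^2}{4\m^2}z\cdot w$ terms, dropping $z\cdot g$ by convexity, and the Young-inequality step with parameter $\lambda/(4\m)$ all line up with the paper's argument. The final step, however, contains a genuine gap.

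After Young's inequality you have
\[
\mathcal{G}^C_{sync}\rho^2\ \le\ -\tfrac{\lambda}{8\m}|w|^2 - \tfrac{\lambda}{4\m}z^T\mathcal{C}^{-1}z + \tfrac{\lambda}{8\m\sigma_{max}^2}|z|^2,
\]
and at this point you use $z^T\mathcal{C}^{-1}z\ge\sigma_{max}^{-2}|z|^2$ in the \emph{wrong direction}: you apply it to the first $z$-term, obtaining the isotropic bound $-\tfrac{\lambda}{8\m}|w|^2-\tfrac{\lambda}{8\m\sigma_{max}^2}|z|^2$. You then claim the matrix inequality $\operatorname{diag}(\tfrac{\lambda}{8\m\sigma_{max}^2}\mathbf{1},\tfrac{\lambda}{8\m}\mathbf{1})\succeq c\,\G$ has $\sigma_{max}^2$ as the worst case among the eigenvalues of $\mathcal{C}$. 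This is backwards. In an eigen-direction of $\mathcal{C}$ with eigenvalue $\sigma_e^2$, the $(1,1)$-block of $\G$ contains $\tfrac{1}{2}\sigma_e^{-2}$, which \emph{grows} as $\sigma_e\to 0$; since the left-hand diagonal entry $\tfrac{\lambda}{8\m\sigma_{max}^2}$ is fixed, the constraint $\tfrac{\lambda}{8\m\sigma_{max}^2}\ge c\bigl(\tfrac{\lambda^2}{8\m^2}+\tfrac{1}{2\sigma_e^2}\bigr)$ becomes most restrictive at $\sigma_e=\sigma_{min}$, not at $\sigma_{max}$, and forces $c\lesssim \lambda\sigma_{min}^2/(\m\sigma_{max}^2)$. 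This degenerates with the condition number of $\mathcal{C}$ and cannot recover the stated rate $c=\tfrac{\lambda}{\m}\min\bigl(\tfrac{1}{8},\tfrac{8\m^2}{5\sigma_{max}^2\lambda^2}\bigr)$, which is deliberately $\sigma_{min}$-free --- the paper even highlights this feature just after the theorem.

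The fix is to keep the $\mathcal{C}^{-1}$ structure in the $z$-variable rather than flatten it to $|z|^2/\sigma_{max}^2$. Using $\tfrac{\lambda}{8\m\sigma_{max}^2}|z|^2\le\tfrac{\lambda}{8\m}z^T\mathcal{C}^{-1}z$ (i.e., applying the spectral bound to the \emph{positive} Young remainder, not to the negative dissipation term), your display becomes
\[
\mathcal{G}^C_{sync}\rho^2 \ \le\ -\tfrac{\lambda}{8\m}\bigl(|w|^2 + z^T\mathcal{C}^{-1}z\bigr)\ =\ -\tfrac{\lambda}{4\m}H_0(z,w),
\]
exactly as in the paper. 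The remaining step is then to compare $\rho^2$ against $H_0$ rather than against a fixed diagonal quadratic form: one bounds the cross term $\tfrac{\lambda}{4\m}z\cdot w\le\tfrac12|w|^2+\tfrac{\lambda^2}{32\m^2}|z|^2$ and then controls $|z|^2$ by $\sigma_{max}^2\,z^T\mathcal{C}^{-1}z\le 2\sigma_{max}^2 H_0$, which produces a comparison constant depending only on $\sigma_{max}$ and $\lambda/\m$. This yields the claimed two-branch $c$ without any dependence on $\sigma_{min}$. Your supermartingale and Wasserstein steps at the end are fine and match the paper (Dynkin via \cite[Theorem 5.5]{davis1984piecewise} followed by the equivalence $\lambda_{\min}(\G)(|z|^2+|w|^2)\le\rho^2\le\lambda_{\max}(\G)(|z|^2+|w|^2)$).
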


 A proof of this theorem is provided in Section~\ref{sec:proofs}.   In the unperturbed case $G \equiv 0$, a similar result is proven for exact randomized HMC in Proposition 4 of \cite{deligiannidis2018randomized}.  Related results have been proven for HMC in \cite{mangoubi2017rapid,BoEbZi2020,chen2019optimal} and second order Langevin dynamics in \cite{dalalyan2018sampling,cheng2018underdamped}, though an important difference in Theorem~\ref{WAH} is that condition~\eqref{eq:lambcond_wah} and the rate in \eqref{WAH1} do not deteriorate in the limit that the condition number of $\mathcal{C}$ becomes large for fixed $\sigma_{max}$.   
 

\begin{example}[Strongly Convex Potential]   \label{ex:SC}
For a $K$-strongly convex, continously differentiable and gradient Lipschitz function $U(x)$, Theorem~\ref{WAH} gives a rate of $c = (\lambda/\m) \min(1/8, (8/5) K \m^2 / \lambda^2)$ provided that $\lambda$ satisfies $\lambda/\m \ge 4 L_G/\sqrt{K}$ where $L_G$ is a Lipschitz constant for the gradient of $G(x) = U(x) - K |x|^2 /2 $.
\end{example}

The next example can be viewed as the potential energy corresponding to a truncation of an infinite-dimensional Gaussian measure \cite{BePiSaSt2011,BlCaSa2014,BoEb2019}.  This model problem illustrates the importance of duration randomization when the underlying Hamiltonian dynamics is highly oscillatory.

\begin{example} [Neal's Example] \label{ex:neal}
Let $\m=1$ and $U(x) = 2^{-1} \sum_{i=1}^{\n} i^{2} x_i^2$; hence, $\sigma_{max} = 1$. The corresponding Hamiltonian dynamics is highly oscillatory when the dimension $\n$ is large \cite{petzold_jay_yen_1997}.  
Noting again that condition~\eqref{eq:lambcond_wah} always holds when $L_G =0$, Theorem~\ref{WAH} gives a dimension-free rate of $c = (\lambda/8) \min ( 1, 16 / (5 \lambda^2) )$ which is maximized at $\lambda^{\star} = 4 \sqrt{5}/5$ where $c^{\star} =\sqrt{5} / 10$.
\end{example}

More generally, when $L_G=0$, the rate from Theorem~\ref{WAH} is maximized at $\lambda^{\star}/\m = 4 \sqrt{5}/(5 \sigma_{max})$ where $c^{\star} = \sqrt{5}/(10 \sigma_{max})$.  This conclusion remains true when $L_G$ is small; specifically, when $ L_G \le \sqrt{5}/(5 \sigma_{max}^2)$.  However, when $L_G$ is larger than that, i.e., $L_G > \sqrt{5}/(5 \sigma_{max}^2)$, the rate is maximized at $\lambda^{\star} / \m = 4 L_G \sigma_{max}$ where  $c^{\star} = 1/(10 L_G \sigma_{max}^3)$.

\begin{remark}[Duration Randomization]  Due to possible periodicity of the Hamiltonian flow, contraction bounds for HMC in the strongly convex case typically require that the duration parameter is short enough \cite{mangoubi2017rapid,BoEbZi2020,chen2019optimal}.  On the other hand, since duration randomized Hamiltonian flows avoid periodicities almost surely, contraction bounds for exact randomized HMC allow longer mean durations as illustrated in Example~\ref{ex:neal} \cite{Ma1989, Ne2011, BoSa2017, deligiannidis2018randomized}.
\end{remark}

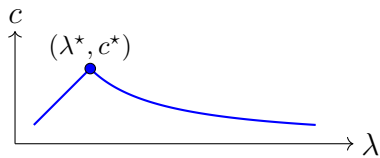
\begin{figure}[t]
\centering
\begin{tikzpicture}[scale=1.0]
      \draw[->] (0,0) -- (4.5,0) node[right] {\large $\lambda$};
      \draw[->] (0,0) -- (0,1.5) node[above] {\large $c$};
      \draw[scale=1.0,domain=0.25:4,variable=\x,blue,thick] plot[samples=200] ({\x},{min(\x,1/\x)});
      \filldraw[color=black,fill=blue] (1,1) circle (0.07) node [above,black, scale=1.0]  {\normalsize $(\lambda^{\star}, c^{\star})$};
    \end{tikzpicture}
\caption{\small  The rates appearing in Example~\ref{ex:neal} is maximized at $(\lambda^{\star}, c^{\star})$.  }
  \label{fig:optimal_lamb}
\end{figure}



\subsection{Contractivity of Andersen dynamics with weak interactions on a high-dimensional torus}

In this part, in order to avoid overloading the notation, we assume $n=1$. However, we stress that the results below can be extended
without essential changes to the case $n\neq 1$. In the following,
we assume that the potential energy of the molecular system $U: \mathbb{T}^{\m} \to \mathbb{R}$ is nonnegative and twice continuously differentiable.

\begin{assumption} \label{hypo::U}
The potential energy $U \in C^2(\mathbb{T}^{\m}_{\ell})$ satisfies  $U(x) \ge 0$ for all $x \in \mathbb{T}^{\m}_{\ell}$.
\end{assumption}

Assumption~\ref{hypo::U} implies that the following constants are finite: \begin{align} \label{eq:LJ}
    L = \sup_{\substack{1 \le i \le \m \\ x \in \mathbb{T}_{\ell}^{\m}}} \left| \frac{\partial^2 U}{\partial x_i^2}(x) \right| \;, \quad  J = \sup_{\substack{1 \le i < j \le \m \\ x \in \mathbb{T}_{\ell}^{\m}}} \left| \frac{\partial^2 U}{\partial x_i \partial x_j}(x) \right| \;.
\end{align}
Fix $i \in \{1, \dots, \m \}$ and define $\nabla_i U(x) \, := \, \frac{\partial U}{\partial x_i}(x)$. Then for all $x, \tilde x \in \mathbb{T}^{\m}_{\ell}$,  \begin{align}
& \left| \nabla_i U(x) - \nabla_i U(\tilde x) \right| \  = \ \left| \int_0^1 \frac{d}{ds} \nabla_i U (\tau_{ s \zeta}(\tilde x))\, ds \right| \nonumber \\
& \qquad \le \left| \int_0^1  \frac{\partial^2 U}{\partial x_i^2} (\tau_{ s \zeta}(\tilde x))\, \zeta_i \, ds \right| +  \sum_{k \ne i} \left| \int_0^1  \frac{\partial^2 U}{\partial x_i \partial x_k} (\tau_{ s \zeta}(\tilde x))\, \zeta_k\,  ds \right| \nonumber \\
& \qquad \le L | \zeta_i | + J \sum_{k \ne i} |\zeta_k | \label{ieq:LJ}
\end{align} 
where $\zeta\in \mathbb R^{\m}$ is an arbitrary tangent vector such that $x=\tau_\zeta (\tilde x)$.

\medskip

The choice of an adequate metric in order to prove contraction properties on the torus is quite tricky. It combines ideas from several previous works
including in particular the results on contractive couplings for
Langevin dynamics and HMC in \cite{Eb2016A}, \cite{eberle2019couplings} and \cite{BoEbZi2020}, as well as the derivation of dimension-free contraction rates
for mean-field models with weak interactions in
\cite{Eb2016A} and \cite{BoSc2020}. Besides combining
these approaches, they have to be adapted
to the special setup on the torus.

To each pair $\left((x,v),(\tilde x,\tilde v)\right)\in \left( \mathbb{T}_{\ell}^{\m} \times \mathbb{R}^{ \m} \right)^2$, we assign $y=(x,v,z,w)\in\mathbb{T}_{\ell}^{\m} \times \mathbb{R}^{3 \m} $ such that $w=v-\tilde v$ and $x=\tau_z(\tilde x)$, i.e.,
$\left((x,v),(\tilde x,\tilde v)\right)=\pi^C(y)$. We define
$ \zeta =\zeta (z,w)$ by \eqref{zeta_i}, and we set $$ q(z,w)=\zeta (z,w)+\gamma^{-1}w.$$
Although the choice of $y$ is not unique, the definition of $\zeta$
and $q$ does not depend on this choice. Since $\zeta$ is in $[-\ell/2,\ell/2]^\m$, it is a tangent vector of a minimal geodesic 
from $\tilde x$ to $x$. Let $\alpha>0$ and let $i \in \{1, \dots, \m \}$. With a slight abuse of notation, we now define a weighted $\ell_2$-distance between the $i$-th components of the coupling by  \begin{equation} \label{eq:ri}
\begin{aligned}
r_i \left((x,v),(\tilde x,\tilde v)\right)\, =\, r_i(y) \, = \, \sqrt{ |\zeta_i(z,w)|^2+ \alpha^{-2} |q_i(z,w)|^2 }  \; .
\end{aligned}
\end{equation} 
Given an initial condition $y \in \mathbb{T}^{\m}_{\ell} \times \mathbb{R}^{3 \m}$, and for any $t \ge 0$, let $y_t=(x_t,v_t,w_t,z_t)$ be the solution to \eqref{odes:tor} with $y_0 = y$, and for any $i \in \{1, \dots, \m \}$, let $r^i_t := r^i(y_t)$ and $\zeta^i_t := \zeta_i(z_t,w_t)$.  As illustrated in Figure~\ref{fig:r}, and as presented in the lemma below, the definition in \eqref{eq:ri} is motivated by the property that $t \mapsto r_t^i := r_i(y_t)$ is a c\`{a}dl\`{a}g trajectory.

\begin{lemma} \label{lem:cadlag}
The function $t \mapsto r_t^i$ is c\`{a}dl\`{a}g and lower semi-continuous, i.e.,   $r^i_t = \lim_{s \downarrow t} r^i_s  \le \lim_{s \uparrow t} r^i_s$ for any $t \ge 0$.  Moreover, it is continuous at points $t$ such that $|\zeta^i_t| < \ell/2$ or $w^i_t = 0$.
\end{lemma}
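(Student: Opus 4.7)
The approach is to reduce $r_i^2$ to continuous pieces plus a controllable cross term and analyze the trajectory $t\mapsto (z_t^i,w_t^i)$ near the critical set $\{z_i\in \ell/2 + \ell \mathbb Z\}$ on which $\zeta_i$ may jump. Since $U\in C^2$, the vector field in \eqref{odes:tor} is $C^1$, so $t\mapsto (z_t^i,w_t^i)$ is $C^1$; in the $\n=1$ case $\zeta_i(z,w)$ depends only on the scalar pair $(z_i,w_i)$ and is smooth off the critical set. Expanding,
\begin{equation*}
r_i(y)^2 \ = \ (1+\alpha^{-2})\,|\zeta_i(z,w)|^2 \ +\ 2\alpha^{-2}\gamma^{-1}\,\zeta_i(z,w)\,w_i \ +\ \alpha^{-2}\gamma^{-2}\,w_i^2,
\end{equation*}
one sees from \eqref{zeta_i} that $(z,w)\mapsto |\zeta_i(z,w)|^2$ is continuous on $\mathbb R^{2\m}$: the two admissible branches $\pm \ell/2$ at a critical point share the same square $\ell^2/4$, which equals $\lim |\zeta_i|^2$ from either side. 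Hence the only possible source of discontinuity in $r_i$ is the cross term $\zeta_i\,w_i$.

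For a critical time $t_0$ with $w_{t_0}^i\neq 0$, Taylor's theorem gives $z_s^i = z_{t_0}^i + w_{t_0}^i(s-t_0) + O((s-t_0)^2)$, so the sign of $w_{t_0}^i$ determines on which side of the critical line $z_s^i$ lies for $s$ just above or just below $t_0$. A direct calculation using \eqref{zeta_i} shows $\lim_{s\downarrow t_0}\zeta_s^i = \zeta_{t_0}^i$ (this is precisely why the branches in \eqref{zeta_i} are selected via the sign of $w$), while $\lim_{s\uparrow t_0}\zeta_s^i = -\zeta_{t_0}^i$; this yields right-continuity and existence of left limits, hence the c\`{a}dl\`{a}g property. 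Combining with continuity of $t\mapsto w_t^i$ gives
\begin{equation*}
r_{t_0-}^2 - r_{t_0}^2 \ =\ 2\alpha^{-2}\gamma^{-1}\bigl(\zeta_{t_0-}^i - \zeta_{t_0}^i\bigr)\,w_{t_0}^i \ =\ -4\alpha^{-2}\gamma^{-1}\,\zeta_{t_0}^i\, w_{t_0}^i \ \ge\ 0,
\end{equation*}
since \eqref{zeta_i} forces $\zeta_{t_0}^i$ to carry the sign opposite to $w_{t_0}^i$. This proves the lower semi-continuity $r_{t_0}\le r_{t_0-}$.

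For the continuity claims, if $|\zeta_t^i|<\ell/2$ then $z_t^i$ lies off the critical set, $\zeta_i$ is locally smooth, and continuity of $r_i$ at $t$ is immediate. If $w_t^i=0$ the only nontrivial subcase is $|\zeta_t^i|=\ell/2$: here $\zeta_s^i$ may still jump (e.g.\ when the second-order term in the Taylor expansion of $z_s^i$ points away from the critical value), but continuity of $r_i$ at $t$ is salvaged by two observations: $|\zeta_s^i|^2 \to (\ell/2)^2$ by continuity of $|\zeta_i|^2$, and $|\zeta_s^i w_s^i| \le (\ell/2)|w_s^i|\to 0$; inserting these into the expansion of $r_i^2$ gives $r_s^2\to (1+\alpha^{-2})(\ell/2)^2 = r_t^2$. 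The main obstacle is the sign bookkeeping at critical crossings---reconciling the branch definition in \eqref{zeta_i} with the direction of the trajectory---but once this is tracked carefully both right-continuity and the LSC inequality reduce to the one-line sign check above.
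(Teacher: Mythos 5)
Your proof is correct and follows essentially the same route as the paper: continuity of $(z_i,w_i)\mapsto|\zeta_i|$ (or its square) isolates the cross term $\zeta_i w_i$ as the only discontinuity source; one-sided analysis at a critical crossing with $w^i_{t_0}\neq 0$ gives right-continuity and $\zeta^i_{t_0-}=-\zeta^i_{t_0}$; the sign check yields $r_{t_0}\le r_{t_0-}$; and the $w^i_t=0$, $|\zeta^i_t|=\ell/2$ case is handled by the vanishing cross term. The only cosmetic difference is that you expand $r_i^2$ in $(\zeta_i,w_i)$ where the paper works with $q_i$, and you invoke a second-order Taylor expansion where continuity of $w$ alone suffices to show $z^i_s$ is strictly monotone near $t_0$.
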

A proof of Lemma~\ref{lem:cadlag} is provided in Section~\ref{sec::proofs:regularity}.  

\medskip

Let $a,\mathcal{R}>0$,  and define a function $f: \mathbb{R}_{\ge0} \to \mathbb{R}_{\ge0}$ as \begin{equation} \label{eq:f}
f(r) \ := \  \int_0^r e^{-a t} \ \mathbbm{1}_{\{ t \le \mathcal{R}\}} \ dt \ = \ \frac 1a\left( 1-e^{-a\, r\wedge \mathcal{R}}\right) \;.
\end{equation}
Note that $f$ is nondecreasing, concave, bounded, and both constant and maximal when $r \ge \mathcal{R}$. Moreover, for all $s, r \ge 0$, \begin{equation} \label{ieq:f}
    f(s) - f(r) \le f_-'(r) \min(s-r, a^{-1}) \;,
\end{equation}
where $f_-'(r)$ is the left derivative of $f(r)$.
To measure the distance between the components of the coupling process, we use the following distance function \begin{equation} \label{eq::rho}
\rho \left((x,v),(\tilde x,\tilde v)\right)\, =\, \rho(y) \ = \ \sum_{i=1}^{\m} f ( r_i(y) )  \;.
\end{equation}
This definition is motivated by \cite{Eb2016A} and \cite{BoSc2020} where similar distance functions have been introduced to obtain dimension-free contraction rates for (resp.) Langevin dynamics and HMC applied to models with weak interactions. For probability measures $\mu ,\nu$ on $\mathbb{T}_{\ell}^{\m} \times \mathbb{R}^{\m}$, we define \begin{equation}\label{eq:Wrho}
\mathcal{W}_{\rho}(\mu , \nu) \ := \ \inf_{\substack{(X, V) \sim \mu \\ (\tilde X, \tilde V) \sim \nu}} \E \left[ \rho((X,V),(\tilde X, \tilde V)) \right] 
\end{equation} 
where the infimum is over all couplings of $\mu$ and $\nu$.   

\medskip

We can now state our {main contraction result for Andersen dynamics on $\mathbb{T}_{\ell}^{\m}\times \mathbb{R}^{\m}$}. Let $(p_t)_{t \ge 0}$ denote the transition semigroup.
The parameters defining the coupling and the metric are defined in the following way: 
 \begin{eqnarray}
 \mathcal{R} &=& \ell/2+\m/(\beta^{1/2} \lambda) \;, \label{eq::R} \\
\gamma &=& 1/(\beta^{1/2} \mathcal{R})   \;,  \label{eq::gamma} \\
a &=& \beta^{1/2} \lambda / \m \;, \quad \text{and}  \label{eq::a}  \\
\alpha &=& \sqrt{1 + \beta L \mathcal{R}^2}  \;.    \label{eq::alpha}   
 \end{eqnarray}
The choice of $\gamma$ is motivated by Lemma \ref{lem:rejp}, and the choice of the other parameters is motivated by the proof of the following theorem.

\begin{figure}[t]   
    \centering
\begin{minipage}{\textwidth}
\begin{minipage}{0.45\textwidth}
\begin{center}
\begin{tikzpicture}[scale=1.0]
    \draw[draw=blue,->-=.5,thick] (-2,1) to (0,1);
    \draw[draw=blue,->-=.5,thick] (0,1) to (2,1);
    \draw[draw=blue,->-=.5,thick] (2,-1) to (0,-1);
        \draw[draw=blue,->-=.5,thick] (0,-1) to (-2,-1);
              \draw[scale=1.0,domain=-2.0:2.0,variable=\x,red,thick] plot[samples=200] ({\x},{-0.5*\x}) node[below] {\large $q=0$};
      \draw[->] (-2.5,0) -- (2.5,0) node[right] {\large
$\zeta$};
      \draw[->] (0,-1.5) -- (0,1.5) node[above] {\large
 $w$};
      \draw[-] (-2,-0.1) -- (-2,0.1) node[below=0.15cm] {\normalsize
$-\ell/2$};
      \draw[-] (2,-0.1) -- (2,0.1) node[below=0.15cm] {\normalsize
$\ell/2$};
          \filldraw[color=red,fill=white] (2.0,1) circle (0.07);
        \filldraw[color=blue,fill=blue] (2.0,-1) circle (0.07);
        \filldraw[color=red,fill=white] (-2.0,-1) circle (0.07); 
                \filldraw[color=blue,fill=blue] (-2.0,1) circle (0.07); 
    \end{tikzpicture}
    \end{center}
\end{minipage}
\begin{minipage}{0.45\textwidth}
\begin{center}
\begin{tikzpicture}[scale=1.0]
      \draw[->] (0,0) -- (5,0) node[right] {\large
$t$};
      \draw[->] (0,0.0) -- (0,2.5) node[above] {\large
$r_t$};
            \draw[-] (1,0.1) -- (1,-0.1) node[below] {\normalsize
 $t_1$};
      \draw[-] (-0.1,1) -- (0.1,1) node[left=0.2cm] {\normalsize
$\ell/2$};
\foreach \a in {0,...,4}{
      \draw[scale=1.0,domain=\a:{\a+1},variable=\x,blue,thick] plot[samples=200] ({\x},{(\x-\a)+1});
                       \draw[dashed,blue] ({\a+1},1) -- ({\a+1},2);
                       \filldraw[color=red,fill=white] ({\a+1},2) circle (0.07);
        \filldraw[color=blue,fill=blue] ({\a+1},1) circle (0.07);
        }
    \end{tikzpicture}
    \end{center}
\end{minipage}
\end{minipage}
\begin{minipage}{\textwidth}
\begin{minipage}[b]{0.48\textwidth}
      \centering
(a)  
\end{minipage}
\begin{minipage}[b]{0.48\textwidth}
      \centering
(b) 
\end{minipage}
\end{minipage}
\caption{\small Plots of a piecewise-constant-velocity trajectory initially at $\zeta_0=-\ell/2$ and $w_0>0$ that jumps at $t=t_1$ to $w_{t_1}=-w_0$  where $\zeta_{t_1}=\ell/2$ in (a) phase space and (b) $r_t$ as a function of time. Note that  $t \mapsto r_{t}$ is a c\`{a}dl\`{a}g trajectory. }
\label{fig:r}
\end{figure}
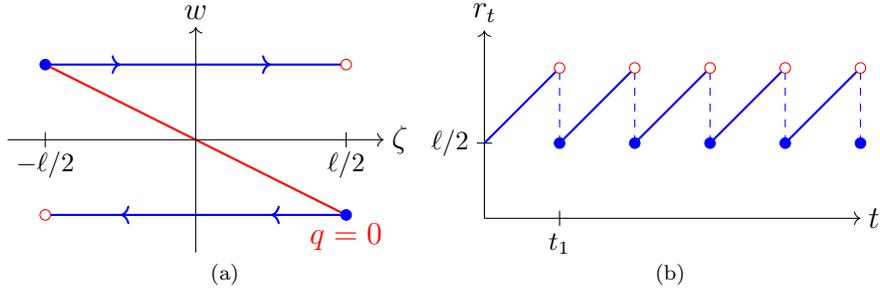

%
%

\begin{theorem} \label{TOR}
Suppose that Assumption~\ref{hypo::U} holds and let $\lambda>0$ satisfy
\begin{equation} \label{eq:lambcond_tor}
\beta^{1/2} \,  \frac{\lambda}{\m} \, \frac{\ell}{2}  \ \ge \  \frac{25}{6} + 11 \, \beta \,  L \, \left(\frac{\ell}{2}\right)^2 \;.
\end{equation} 
Suppose moreover that 
\begin{equation}
    \label{eq:Jbound}
   {J}\ \le\ \frac{1}{75(\m -1)\beta\ell^2}\max\left(\sqrt{\beta L\ell^2},1\right)
    \exp\left(- \beta^{1/2} \frac{ \lambda}{\m} \frac{\ell}{2}\right).
\end{equation}
Then for all $y \in  \mathbb{T}_{\ell}^{\m} \times \mathbb{R}^{3 \m} $, the process $e^{\cA \, t} \rho(Y_t)$ is a nonnegative supermartingale where
\begin{equation}
 \label{eq:rate_tor}
\cA   :=   \frac{1}{90} \frac{\lambda}{\m} \exp\left(- \beta^{1/2} \frac{ \lambda}{\m} \frac{\ell}{2}\right) 
\end{equation}
Moreover, for all probability measures $\nu$ and $\mu$ on $\mathbb{T}_{\ell}^{\m} \times \mathbb{R}^{\m}$ we have \begin{equation} \label{eq::Wrho_tor}
\mathcal{W}_{\rho}(\mu p_t, \nu p_t) \ \le \ e^{-\cA \, t} \, \mathcal{W}_{\rho}(\mu , \nu) \;.  
\end{equation}
\end{theorem}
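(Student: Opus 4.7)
The plan is to establish the infinitesimal contraction inequality $\mathcal{G}^C_\gamma \rho(y) \le -c_A\, \rho(y)$ at every $y \in \mathbb{T}_{\ell}^{\m} \times \mathbb{R}^{3\m}$, invoke the PDMP martingale problem from \cite{davis1984piecewise} to deduce that $e^{c_A t}\rho(Y_t)$ is a local supermartingale, and use the boundedness of $f$ (hence of each summand of $\rho$) to upgrade this to a true supermartingale. The Wasserstein bound \eqref{eq::Wrho_tor} then follows by taking expectations, optimizing over the initial coupling of $\mu$ and $\nu$, and pushing forward via $\pi^C$.

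I would split $\mathcal{G}^C_\gamma \rho = \mathcal{L}^C \rho + \mathcal{A}^C_\gamma \rho$ and treat each piece separately. For the Liouville contribution, I would differentiate $r_i^2 = |\zeta_i|^2 + \alpha^{-2}|q_i|^2$ along a smooth arc of the flow \eqref{odes:tor}; using $w_i = \gamma(q_i - \zeta_i)$ to eliminate $w_i$, one obtains schematically
\begin{equation*}
\tfrac{1}{2}\tfrac{d}{dt}r_i^2 = \gamma\zeta_i\cdot(q_i - \zeta_i) + \alpha^{-2}\gamma|q_i|^2 - \alpha^{-2}\gamma\, q_i\cdot\zeta_i + \alpha^{-2}\gamma^{-1} q_i\cdot\bigl(\nabla_i U(\tilde x) - \nabla_i U(x)\bigr).
\end{equation*}
The last term is bounded by $\alpha^{-2}\gamma^{-1}|q_i|\bigl(L|\zeta_i| + J\sum_{k\ne i}|\zeta_k|\bigr)$ using \eqref{ieq:LJ}. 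Dividing by $r_i$ and applying the chain rule for $f$ yields a pointwise bound for $\mathcal{L}^C f(r_i)$. At the non-smooth instants where $|\zeta_i|$ crosses $\ell/2$, Lemma \ref{lem:cadlag} and the concavity bound \eqref{ieq:f} guarantee that these jumps only decrease $r_i$ and hence $f(r_i)$, so they can be discarded in an upper bound.

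Next I would compute $\mathcal{A}^C_\gamma \rho$. Only the coordinate $i = I_k$ of the randomly selected particle changes, at rate $\lambda/\m$ per particle. Lemma \ref{lem:rejp} implies that with probability at least $1 - \sqrt{\beta/(2\pi)}\,\gamma|\zeta_i|$ the new velocity difference is exactly $w_i^{\text{new}} = -\gamma\zeta_i$, which forces $q_i^{\text{new}} = 0$ and drives $r_i$ down to $|\zeta_i|$. On the rejection event, the second inequality in Lemma \ref{lem:rejp} bounds $\mathbb{E}[|w_i^{\text{new}}|^2; \text{reject}]$ in terms of $\gamma|\zeta_i|$, so the expected increase of $r_i$ is controlled. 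Using concavity of $f$ via \eqref{ieq:f}, the jump contribution yields a negative multiple of $f'_-(r_i)$ times the $\alpha^{-2}|q_i|^2$ part of $r_i^2$, providing the contractive mechanism that compensates for the growth of $q_i$ under the Hamiltonian flow.

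The key combinatorial step is the sum over $i$: the diagonal $L|\zeta_i|$ contribution from \eqref{ieq:LJ} is absorbed by the choice $\alpha = \sqrt{1 + \beta L\mathcal{R}^2}$, while the off-diagonal $J\sum_{k\ne i}|\zeta_k|$ contribution produces a double sum that is controlled using $|\zeta_k| \le \ell/2$ and $f'_-(r) \le 1$; the hypothesis \eqref{eq:Jbound} is calibrated precisely so that this cross term is a small fraction of $c_A \rho$. The parameters $\mathcal{R}$, $\gamma$, and $a$ in \eqref{eq::R}--\eqref{eq::a} are chosen so that on $\{r_i \le \mathcal{R}\}$ the jump contraction dominates both the drift in $\zeta_i$ and the local Lipschitz term, with the exponential factor $e^{-a\mathcal{R}} = \exp(-\beta^{1/2}(\lambda/\m)(\ell/2))$ entering because $f'_-(r) = e^{-ar}\mathbbm{1}_{r\le\mathcal{R}}$ must be uniformly bounded below on $[0,\mathcal{R}]$.

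The main obstacle will be the global bookkeeping in the last step: juggling the drift in $\zeta$, the growth of $q$ along the Hamiltonian flow, the $L$-term from the potential's local curvature, the $J$-term coupling different particles, and the two-regime contribution of Lemma \ref{lem:rejp} so that they fit together with the explicit constants $25/6$, $11$, $75$ and $1/90$ appearing in \eqref{eq:lambcond_tor}--\eqref{eq:rate_tor}. Once the per-$i$ estimate $\mathcal{G}^C_\gamma f(r_i) \le -c_A f(r_i) + (\text{cross term in } J)$ is proved, summation gives the infinitesimal contraction, and the passage to Wasserstein contraction follows from the fact that $\pi^C(Y_t)$ is a coupling of two Andersen processes, as established in Section \ref{sec:coupling}.
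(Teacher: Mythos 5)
Your high-level strategy — split $\mathcal G^C_\gamma = \mathcal L^C + \mathcal A^C_\gamma$, use the differential of $r_i^2$ along \eqref{odes:tor}, apply Lemma~\ref{lem:rejp} with \eqref{ieq:f} for the jump part, absorb the diagonal $L$-term into $\alpha$, and let the off-diagonal $J$-term be controlled by \eqref{eq:Jbound} after summing over $i$ — is the same as the paper's, and your schematic computation of $\frac12\frac{d}{dt}r_i^2$ matches the derivation leading to \eqref{LC_ri2}. The Wasserstein step at the end is also correct.

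The genuine gap is in the passage from the infinitesimal bound to the supermartingale. You propose to ``establish $\mathcal G^C_\gamma\rho(y)\le -\cA\rho(y)$ at every $y$, invoke the PDMP martingale problem from \cite{davis1984piecewise}'', and then dismiss the non-smoothness of $r_i$ at $|\zeta_i|=\ell/2$ by saying the downward jumps ``can be discarded in an upper bound.'' This does not work as stated: $\rho$ is not in the domain $\mathcal D(\mathcal G^C_\gamma)$ precisely because $\zeta_i(z,w)$, hence $r_i$, is discontinuous on the set $\{|\zeta_i|=\ell/2\}$, so the pointwise inequality $\mathcal G^C_\gamma\rho\le-\cA\rho$ is not even well defined and Davis's martingale theorem does not apply directly. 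The paper instead introduces the one-sided directional derivative $\mathscr L^C$ (Definition~\ref{right_sided_LC}), which exists everywhere except on $B_i=\{\zeta_i=\ell/2,\ w_i=0\}$, and proves an \emph{integral} Dynkin-type inequality along the deterministic flow (Lemma~\ref{LC_tor_lemma}) via a careful case analysis. The set $B_i$ is the real obstruction, not the downward jumps you mention: near $B_i$ the trajectory $t\mapsto(\zeta_t^i,w_t^i)$ can wind around $(\ell/2,0)$ infinitely often in arbitrarily short time, so one cannot simply ``discard'' finitely many jumps. This is resolved by Remark~\ref{Bi}, which shows that under \eqref{eq:lambcond_tor} and the choice \eqref{eq::R} of $\mathcal R$ one has $B_i\subset\{r_i>\mathcal R\}$, where $f\circ r_i$ is constant — a structural observation your proposal does not contain and which is what makes the parameter choices coherent. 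Finally, the paper does not get the supermartingale from a single invocation of the martingale problem: it proves the small-time Dynkin inequality with an $O(t^2)$ error for the coupling process (Lemma~\ref{lemma_D1}), uses the Markov property to telescope it over a fine partition, and passes to the mesh-zero limit. Without these steps the argument is not rigorous.
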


A proof of this theorem is provided in Section~\ref{sec::proofs}.  Remarkably, the result captures the correct order of the dimension dependence for Andersen dynamics in the free-streaming case where $L=J=0$, and condition~\eqref{eq:lambcond_tor} reduces to $\beta^{1/2} (\lambda/\m) (\ell/2) \ge 25/6$. A corresponding bound holds for weak interactions, i.e., when $J$ satisfies 
Condition \eqref{eq:Jbound}. On the other hand, a restriction on $J$ can not be avoided. Indeed, for large
values of $J$,  phase transitions in the infinite dimensional limit can cause an exponential degeneration 
of the rate of convergence to equilibrium as the number 
$m$ of particles goes to infinity, even if $\lambda $ is increased linearly with $\m$.



\section{Proofs in the weakly anharmonic case} \label{sec:proofs}

\begin{proof}[Proof of Theorem~\ref{WAH}]
Here we apply the synchronous coupling described in Remark~\ref{rmk:sync}. Let $(Z_t, W_t) := (X_t - \tilde X_t, V_t - \tilde V_t)$.  In between two consecutive jump times, $t \in [T_{k}, T_{k+1})$, note that the time derivative of $(Z_t, W_t)$ satisfies
\[
\frac{d}{dt} Z_t \ =\ W_t , \quad  \frac{d}{dt} W_t \ = \ - \mathcal{C}^{-1} Z_t - (\nabla G(X_t) - \nabla G(\tilde X_t) ) , 
\] with $Z_{T_{k}}=Z_{T_{k}-}$, $W_{T_{k}}^{I_{k}} = 0$, and $W_{T_{k}}^{j} = W_{T_{k}-}^{j}$ for $j \ne I_{k}$.  In particular, when $G \equiv 0$ these differential equations become Hamiltonian with respect to the unperturbed Hamiltonian function $H_0(z,w)$, and hence for $y=(x,v,\tilde x,\tilde v)$, $Z=x-\tilde x$ and $w=v-\tilde v$,  \begin{eqnarray}
\label{eq:LC_H0}
(\mathcal{L}^C H_0)(y) \ = \ - (\nabla G(x) - \nabla G(\tilde x)) \cdot w  \ \le \  L_G |w| |z| 
\end{eqnarray}
where we applied in turn the definition of $\mathcal{L}^C$ in \eqref{eq:LC} and \eqref{eq:wah_lip} in Assumption~\ref{hypo:U}. Similarly, applying $\mathcal{L}^C$ to $\Psi (y):=z \cdot w$ gives  \begin{eqnarray}
\label{eq:LC_virial}
(\mathcal{L}^C\Psi ) ( y )  \ = \   |w|^2 - 
z \cdot ( \nabla U(x) - \nabla U(\tilde x)) \ \le \ |w|^2 - z^T \mathcal{C}^{-1} z 
\end{eqnarray}
where in the last step we used convexity of the perturbation $G(x)$.
Applying $\mathcal{L}^C$ to $\rho^2$ in \eqref{eq:rho}, and then inserting \eqref{eq:LC_H0} and \eqref{eq:LC_virial} yields
  \begin{eqnarray} 
(\mathcal{L}^C \rho^2)(y) & \le & \frac{\lambda}{4 \m}|w|^2  + \frac{\lambda^2}{4 \m^2} z \cdot w - \frac{\lambda}{4 \m} z^T \mathcal{C}^{-1} z + L_G |w| |z| \;. \label{eq:LC_wah}
\end{eqnarray}

By definition of $\mathcal{A}^C_{sync}$ in \eqref{eq:AC} and Remark \ref{rmk:sync}, \begin{eqnarray}
(\mathcal{A}^C_{sync} \, \rho^2)(y) & = & - \frac{\lambda^2}{4 \m^2} z \cdot w - \frac{\lambda}{2 \m} |w|^2 \;. \label{eq:AC_wah}
\end{eqnarray}

Combining \eqref{eq:AC_wah} and \eqref{eq:LC_wah} yields \begin{eqnarray}
(\mathcal{G}^C_{sync} \, \rho^2)(y)  &\le & -\frac{\lambda}{4 \m} \left( |w|^2 + z^T \mathcal{C}^{-1} z - \frac{4 L_G \m}{\lambda} |w| |z| \right) \nonumber \\
& \le & -\frac{\lambda}{4 \m} \left( \frac{1}{2} |w|^2 + \left( z^T \mathcal{C}^{-1} z - \frac{8 L_G^2 \m^2}{\lambda^2} |z|^2 \right)  \right) \nonumber \\
& \le & - \frac{\lambda}{4 \m} H_0(z,w)   \label{eq:GC_wah}
\end{eqnarray}
where in the last step we applied condition \eqref{eq:lambcond_wah} and $\sigma^{-2}_{max} |z|^2 \le z^T \mathcal{C}^{-1} z$ which together imply that $(8 L_G^2 \m^2 / \lambda^2) |z|^2 \le (1/2) \sigma_{max}^{-2} |z|^2 \le (1/2) z^T \mathcal{C}^{-1} z$.  Note that   
\begin{eqnarray}
     \rho(y)^2 &= &  H_0(z,w)  + \frac{\lambda}{4 \m} z \cdot w + \frac{\lambda^2}{8 \m^2}  |z|^2 \
      \ \le \ 2 H_0(z,w)+   \frac{5 \lambda^2}{32 \m^2} |z|^2  \nonumber \\
    & \le &  \max\left( 2, \frac{5 \sigma_{max}^2 \lambda^2}{32 \m^2}  \right)  H_0(z,w)  \label{eq:comp_wah} 
\end{eqnarray}
where in the last step we again used $\sigma^{-2}_{max} |z|^2 \le z^T \mathcal{C}^{-1} z$.  Inserting \eqref{eq:comp_wah} into \eqref{eq:GC_wah} gives the required infinitesimal contraction result in \eqref{WAH1}.  

\medskip

For the corresponding Wasserstein bound, first, note from \eqref{eq:rhoG} 
 \begin{equation} \label{eq:comp_wah_2}
\lambda_{min}(\G) (|z|^2 + |w|^2) \ \le \  \rho(y)^2 \ \le \ \lambda_{max}(\G) (|z|^2 + |w|^2) \;,
\end{equation}
where $\lambda_{min}(\G)$ and $\lambda_{max}(\G)$ are the smallest and largest eigenvalues of the matrix $\G$, respectively.  Let $g(t,y):=e^{c t} \rho(y)^2$. Then by \eqref{WAH1},$\frac{\partial g}{\partial t}+\mathcal{G}^C_{sync}g\le 0 $. Hence by \cite[Theorem 5.5]{davis1984piecewise}, the process $g(t,Y_t)$ is a non-negative supermartingale, and thus, $\E\left[ \rho(Y_t)^2 \right] \le e^{-c t} \rho(y)^2$.  Therefore, by the coupling characterization of the 2-Wasserstein metric and \eqref{eq:comp_wah_2}, \begin{align*}
\mathcal{W}_2(\mu p_t, \nu p_t)^2 \ \le \  \lambda_{min}(\G)^{-1} \E\left[ \rho(Y_t)^2 \right] \  \ \le \ 
\varkappa(\G) e^{-c t} \mathcal{W}_2(\mu, \nu)^2 \;,
\end{align*}
where $\varkappa(\G) = \lambda_{max}(\G) \lambda_{min}(\G)^{-1}$ is the condition number of $\G$.
By taking square roots, we obtain the required bound in \eqref{eq:W2_wah}.
\end{proof}

%
%
%


\section{Proofs for Andersen dynamics on a high-dimensional torus} \label{sec::proofs}

To prove contractivity of Andersen dynamics on $\mathbb{T}_{\ell}^{\m}$, and as illustrated in Figure~\ref{fig:ri_sublevel}, we use the distance function $r_i(y)$ in \eqref{eq:ri} to decompose $\mathbb{T}_{\ell}^{\m} \times \mathbb{R}^{3 \m}$ into the following sets: $\{ r_i > \mathcal{R} \}$,  $\{ 0<r_i \le \mathcal{R} \}$,  and $Z_i \ := \ \{ r_i=0 \}$.  In addition, we introduce the following subset \begin{align}
    B_i & \ := \ \{ y=(x,v,z,w) \in \mathbb{T}_{\ell}^{\m} \times \mathbb{R}^{3 \m} ~:~ \zeta_i(z,w)=\ell/2 ~~\text{and}~~ w_i = 0 \}  \;. 
\end{align}
The following remark shows that with the definition of  $\mathcal{R}$ in \eqref{eq::R}, and under condition \eqref{eq:lambcond_tor},  $B_i \subset \{ r_i > \mathcal{R} \}$, see also Figure \ref{fig:ri_sublevel}.

\begin{remark} \label{Bi}
In Theorem~\ref{TOR}, the condition on $\lambda$ in \eqref{eq:lambcond_tor} implies \[
\beta^{1/2} \lambda \mathcal{R} / \m \ge 4 + 6 \beta L \mathcal{R}^2 \;.
\] Under this condition, by definition of $\mathcal{R}$ in \eqref{eq::R}, \[
\frac{\mathcal{R} - \ell/2}{\mathcal{R}} = \frac{\m}{\beta^{1/2} \lambda \mathcal{R}} \le \frac{1}{4 + 6 \beta L \mathcal{R}^2} 
\] which implies that $\ell/(2 \mathcal{R}) \ge (3+6 \beta L \mathcal{R}^2)/(4 + 6 \beta L \mathcal{R}^2) $ and hence \[
\frac{\ell}{2} < \mathcal{R} \le \frac{4 + 6 \beta L \mathcal{R}^2}{3 + 6 \beta L \mathcal{R}^2} \frac{\ell}{2}  = \frac{\ell}{2} + \frac{1}{3+6 \beta L \mathcal{R}^2} \frac{\ell}{2} 
\] In particular, $\mathcal{R} \le (4/3) \ell/2$, and consequently, for all $y \in B_i$,
\begin{align*}
    r_i(y) \ &= \ \sqrt{1 + \alpha^{-2}} \,  |\zeta_i| = \sqrt{1+\alpha^{-2}} \, (\ell/2) \ \ge \  ( 1 + (\sqrt{2} - 1) \alpha^{-2} ) \, (\ell/2)  \\
    \ &\ge \  \left(1 + \frac{\sqrt{2}-1}{1+\beta L \mathcal{R}^2} \right) \, \frac{\ell}{2} \ge \left(1 + \frac{1/3}{1+\beta L \mathcal{R}^2} \right) \, \frac{\ell}{2}  \ge \mathcal{R} \;,
\end{align*} 
where we used the inequality $\sqrt{1+\mathsf{x}} \ge 1 + (\sqrt{2}-1) \mathsf{x}$ valid for all $\mathsf{x} \in [0,1]$, and \eqref{eq::alpha} to eliminate $\alpha$.  
\end{remark}

By Remark~\ref{Bi}, for all $y \in B_i$, $f(r_i(y))=f(\mathcal{R})$ is constant and maximal.  As we will see below, this observation simplifies the bounds on the metric along the deterministic flow of \eqref{odes:tor} starting at $y \in B_i$.

\begin{figure}[ht!]      
\begin{minipage}{0.55\textwidth}
\includegraphics[width=\textwidth]{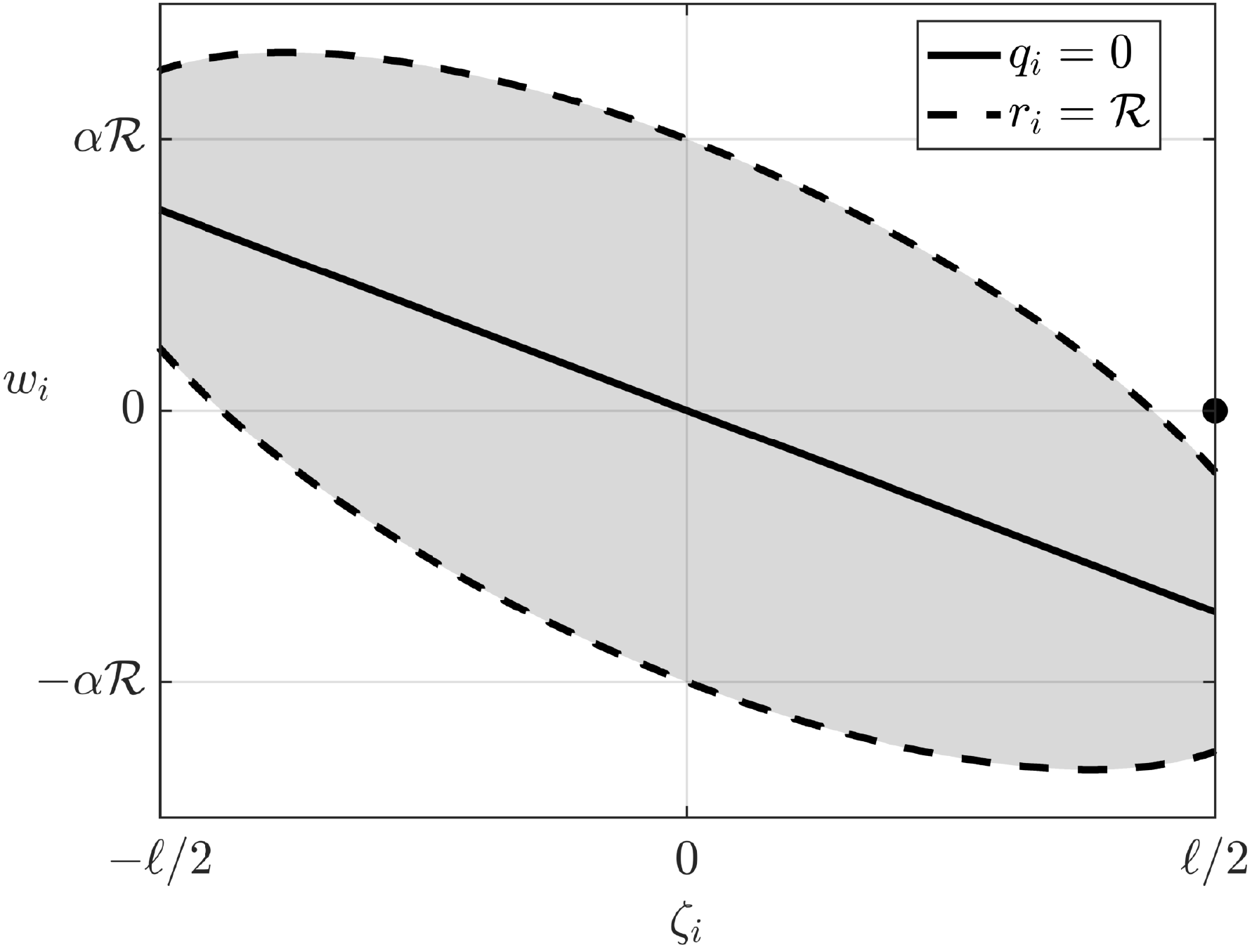}
 \end{minipage} \hfill
   \begin{minipage}{0.4\textwidth} 
 
\caption{ \small  
The grey-shaded region corresponds to the sublevel set $\{ r_i \le \mathcal{R} \}$ whose boundary is the truncated ellipse indicated by the dashed black line. The solid black line corresponds to the line segment $q_i = \zeta_i + \gamma^{-1} w_i =0$, which is on the long axis of the truncated ellipse. 
As noted in Remark~\ref{Bi}, the point $(\zeta_i, w_i)=(\ell/2,0)$ is not in the grey-shaded region.   
 }    \label{fig:ri_sublevel}
 \vspace{1cm}
\end{minipage} 
\end{figure}

\subsection{Bounds for coupling of velocities} \label{proof:rejp}

\begin{proof}[Proof of Lemma~\ref{lem:rejp}]
Let $\mathbf{1}_{\n}$ be the $\n \times \n$ identity matrix and
introduce $w' = \xi - \tilde \xi$. Noting that  $\Pr( w' \ne - \gamma \mathsf{b} ) = d_{\TV}(\mathcal{N}(0,\beta^{-1} \mathbf{1}_{\n}), \mathcal{N}(\gamma \mathsf{b},\beta^{-1} \mathbf{1}_{\n}))$ \cite[Section 2.3.2]{BoEbZi2020}, scale invariance of the total variation distance implies
\begin{align*}
 \Pr(  w' \ne - \gamma \mathsf{b} ) &= d_{\TV}(\mathcal{N}(0,\mathbf{1}_{\n}), \mathcal{N}(\sqrt{\beta} \gamma \mathsf{b},\mathbf{1}_{\n}))  \
  =\ d_{\TV}(\mathcal{N}(0,1), \mathcal{N}(\sqrt{\beta} \gamma |\mathsf{b}|,1))  \\
& = 2 \mathcal{N}(0,1) \left[ (0,\sqrt{\beta} \gamma |\mathsf{b}|/2) \right]  \le \sqrt{\beta}  \gamma |\mathsf{b}| / \sqrt{2 \pi}  \;. 
 \end{align*}
{Hence \eqref{ieq:rejpA} holds.} Figure 4 of \cite{BoEb2019} illustrates the second to last step.  
 
 \smallskip
 
When $\mathsf{b} = 0$, the set $\{ w' \ne - \gamma \mathsf{b} \}$ is empty and \eqref{ieq:rejpB} holds.  Thus, suppose that $\mathsf{b} \ne 0$. Then the set $\{ x \in \mathbb{R}^{\n} : x \cdot \mathsf{b} = 0 \}$ defines an $\n-1$ dimensional hyperplane. By \eqref{eq:tildexi},
\begin{align*}
 &\mathbb{E} \left( |\xi|^2 ; ~w' \ne - \gamma \mathsf{b} \right) = \int_{\mathbb{R}^{\n}} |x|^2  ( \varphi_{\beta}(x) - \varphi_{\beta}(x) \wedge \varphi_{\beta}(x+\gamma \mathsf{b}) ) dx \\
 &= \int_{\mathbb{R}^{\n}} |x|^2 ( \varphi_{\beta}(x) - \varphi_{\beta}(x + \gamma \mathsf{b}) )^+ d x \\
 &= \int_{\mathbb{R}^{\n}} |x - (1/2) \gamma \mathsf{b}|^2 ( \varphi_{\beta}(x - (1/2) \gamma \mathsf{b}) - \varphi_{\beta}(x + (1/2) \gamma \mathsf{b}) )^+ d x \\
& = \int_{\{\mathsf{b} \cdot x \ge 0\}} |x - (1/2) \gamma \mathsf{b}|^2 ( 1- e^{ - \beta \gamma \mathsf{b} \cdot x } ) \varphi_{\beta}( x - (1/2) \gamma \mathsf{b}) dx = \rn{1} + \rn{2}
\end{align*}
where we introduced $\rn{1}$ and $\rn{2}$ \begin{align}
    \rn{1} &=\int\limits_{\{\mathsf{b} \cdot x_{\parallel} \ge 0\}}  |x_{\perp}|^2  ( 1- e^{ - \beta \gamma \mathsf{b} \cdot x_{\parallel} } ) \varphi_{\beta}(x_{\parallel} + x_{\perp} -\frac{1}{2} \gamma \mathsf{b}) d x_{\parallel} d x_{\perp} \;, \\
    \rn{2} &= \int\limits_{\{\mathsf{b} \cdot x_{\parallel} \ge 0\}}  |x_{\parallel} - \frac{1}{2} \gamma \mathsf{b}|^2  ( 1- e^{ - \beta \gamma \mathsf{b}\cdot x_{\parallel}  } ) \varphi_{\beta}(x_{\parallel} + x_{\perp} -\frac{1}{2} \gamma \mathsf{b}) d x_{\parallel} d x_{\perp} \;,  \label{rejp_2}
\end{align}
 that involve a change of variables given by $x = x_{\parallel} + x_{\perp}$ with $x_{\perp} \cdot \mathsf{b} = 0$.

Now let $\phi(s) = \exp(-(1/2) \beta |s|^2 ) / \sqrt{2 \pi \beta^{-1}}$. Integration over the $x_{\perp}$ variable yields $\rn{1} = \beta^{-1} (\n - 1)  \Pr( w' \ne - \gamma \mathsf{b} )$ and \begin{align}
& \rn{2} = \int_0^{\infty} ( 1- e^{ - \beta \gamma |\mathsf{b}| s } ) | s -  \gamma |\mathsf{b}|/2|^2   \phi(s - \gamma |\mathsf{b}|/2) ds \nonumber \\
&= \int_0^{\infty} ( 1- e^{ - \beta \gamma |\mathsf{b}| s } ) \left(  \beta^{-2} \frac{d^2}{ds^2} \phi(s - \gamma |\mathsf{b}|/2) + \beta^{-1} \phi(s - \gamma |\mathsf{b}|/2) \right) ds \nonumber  \\
&=\int_0^{\infty} ( 1- e^{ - \beta \gamma |\mathsf{b}| s } )   \beta^{-2} \frac{d^2}{ds^2} \phi(s - \gamma |\mathsf{b}|/2) ds  + \beta^{-1} \Pr(w' \ne - \gamma \mathsf{b}) \nonumber  \\
&= - \gamma^2  |\mathsf{b}|^2 \int_0^{\infty}   \phi(s + \gamma |\mathsf{b}|/2) ds + \frac{ \gamma |\mathsf{b}| \phi( \gamma |\mathsf{b}| / 2)  +  \Pr(w' \ne - \gamma \mathsf{b}) }{\beta} \nonumber \\
&= - \frac{\gamma^2  |\mathsf{b}|^2}{2} \Pr(w' = - \gamma \mathsf{b}) + \frac{ \gamma |\mathsf{b}| \phi( \gamma |\mathsf{b}| / 2)  +  \Pr(w' \ne - \gamma \mathsf{b}) }{\beta} \label{rejpb:II}
\end{align}
where in the last step integration by parts was used twice.
Combining \rn{1} and \rn{2} with \eqref{ieq:rejpA} gives $\mathbb{E} \left( |\xi|^2 ; ~w' \ne - \gamma \mathsf{b} \right) \le (\n + 1)   \gamma |\mathsf{b}| / \sqrt{2 \pi \beta} $; hence, \eqref{ieq:rejpB} holds.  
\end{proof}

\subsection{Bounds for Andersen collision operator acting on metric}

\begin{lemma} \label{AC_tor_lemma}
Suppose that $\lambda>0$ satisfies condition~\eqref{eq:lambcond_tor}.
For any $i \in \{1, \dots, \m\}$, 
\begin{equation}
    \mathcal{A}^C_{\gamma} ( f \circ r_i )  \ \le  \ \begin{cases} \ - \frac{9}{25} \frac{\lambda}{\m} \exp\left(- \beta^{1/2} \frac{ \lambda}{\m} \frac{\ell}{2} \right) f\circ r_i & \text{if $r_i > \mathcal{R}$} \;, \\
   - \gamma \left( -\frac{2}{5} |\zeta_i|^2 + \left( \frac{3}{10} \frac{\lambda}{\gamma \m} - \frac{1}{5} \right) \right) \frac{f_-'\circ r_i}{r_i}  & \text{if $0 < r_i \le  \mathcal{R}$} \;, \\
   \  0 & \text{if $r_i=0$}  \;.
    \end{cases}
\end{equation}
\end{lemma}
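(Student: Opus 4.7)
My plan is as follows. First, since only the choice $I = i$ modifies $(z_i, w_i)$, the collision operator reduces to $\mathcal{A}^C_\gamma (f\circ r_i)(y) = \frac{\lambda}{\m}\E_{\xi,\mathcal{U}}[f(r_i^+) - f(r_i)]$, where $r_i^+$ uses the updated $w_i^+ = \xi - \tilde\xi$ with $\tilde\xi = \Phi(\xi, \zeta_i, \mathcal{U})$. I would split the inner expectation via the definition \eqref{eq:tildexi} of $\Phi$ into an acceptance event $A$ (on which $\tilde\xi = \xi + \gamma\zeta_i$, so $w_i^+ = -\gamma\zeta_i$, $q_i^+ = 0$, and $r_i^+ = |\zeta_i|$) and a rejection event $A^c$ (on which, since $\n = 1$ makes $e_{\zeta_i} = \mathrm{sign}(\zeta_i)$, we have $w_i^+ = 2\xi$ and $q_i^+ = \zeta_i + 2\gamma^{-1}\xi$). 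Lemma~\ref{lem:rejp} then supplies $\Pr(A^c) \le \sqrt{\beta}\gamma|\zeta_i|/\sqrt{2\pi}$ and $\E[|\xi|^2; A^c] \le 2\gamma|\zeta_i|/\sqrt{2\pi\beta}$, from which Cauchy--Schwarz yields $\E[|\xi|; A^c] \le \gamma|\zeta_i|/\sqrt{\pi}$.

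The case $r_i = 0$ is immediate: $\zeta_i = q_i = 0$ force $w_i = 0$, and on both $A$ and $A^c$ (via the convention $e_0 = 0$) we get $w_i^+ = 0$, so $r_i^+ = 0$ and the integrand vanishes. For $r_i > \mathcal{R}$, I would exploit that $f(r_i) = f(\mathcal{R})$ is maximal: on $A^c$ the contribution is non-positive, while on $A$ Remark~\ref{Bi} gives $r_i^+ = |\zeta_i| \le \ell/2 < \mathcal{R}$, producing the strict decrement $f(\mathcal{R}) - f(|\zeta_i|) \ge f(\mathcal{R}) - f(\ell/2) = a^{-1}(1 - e^{-1})e^{-a\ell/2}$. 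Combining with $\Pr(A) \ge 1 - (\ell/2)/(\mathcal{R}\sqrt{2\pi}) \ge 1 - 1/\sqrt{2\pi}$ and $f(r_i) = f(\mathcal{R}) \le a^{-1}$, the numerical inequality $(1 - 1/\sqrt{2\pi})(1 - e^{-1}) \ge 9/25$ yields the first claimed bound.

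For $0 < r_i \le \mathcal{R}$, I would apply the concavity inequality \eqref{ieq:f} to get $f(r_i^+) - f(r_i) \le f'_-(r_i)\min(r_i^+ - r_i, a^{-1})$. On $A$, the identity $r_i^2 - |\zeta_i|^2 = \alpha^{-2}|q_i|^2$ combined with $r_i + |\zeta_i| \le 2r_i$ yields the contractive bound $r_i - r_i^+ \ge \alpha^{-2}|q_i|^2/(2r_i)$. On $A^c$, I would use $|q_i^+| \le |\zeta_i| + 2\gamma^{-1}|\xi|$ together with the moment bounds above to estimate $\E[\min(r_i^+ - r_i, a^{-1}); A^c]$ in terms of $|\zeta_i|$. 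Substituting the parameter definitions $a = \beta^{1/2}\lambda/\m$, $\gamma = 1/(\beta^{1/2}\mathcal{R})$, $\alpha^2 = 1 + \beta L\mathcal{R}^2$, and $\mathcal{R} = \ell/2 + \m/(\beta^{1/2}\lambda)$, and invoking the slack in condition~\eqref{eq:lambcond_tor} (which by Remark~\ref{Bi} gives $a\mathcal{R} \ge 4 + 6\beta L\mathcal{R}^2 \ge 4$), I would regroup terms to reach the announced form.

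The main obstacle is this last step: matching the precise coefficients $2/5$ and $3/10$ requires carefully balancing the contractive acceptance contribution (scaling like $|q_i|^2/r_i$) against the positive rejection contribution (scaling like $|\zeta_i|/r_i$), using both parts of Lemma~\ref{lem:rejp} and the explicit form $f'_-(r) = e^{-ar}$ on $(0, \mathcal{R}]$. The bookkeeping identity $\lambda/(\gamma\m) = a\mathcal{R}$ will let me rewrite the target expression with the fixed-sign quantity $\tfrac{3a\mathcal{R}}{10} - \tfrac{1}{5} \ge 1$ playing the role of the confining term, while the $|\zeta_i|^2$-dependent term $\tfrac{2\gamma}{5}|\zeta_i|^2$ absorbs the rejection penalty at large $|\zeta_i|$.
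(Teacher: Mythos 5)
Your reduction to $I=i$, the acceptance/rejection split, and the treatment of the cases $r_i=0$ and $r_i>\mathcal R$ all match the paper's proof and are correct; the numerical check $(1-1/\sqrt{2\pi})(1-e^{-1})\approx 0.380\ge 9/25$ is a valid variant of the paper's $\tfrac35\cdot\tfrac35$. For the acceptance branch of the middle case, the bound $r_i-r_i^+\ge \alpha^{-2}|q_i|^2/(2r_i)$ is also exactly what the paper uses.

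The gap is in the rejection branch of the case $0<r_i\le\mathcal R$, which you acknowledge you have not completed. The route you propose --- bounding $|q_i^+|\le|\zeta_i|+2\gamma^{-1}|\xi|$ and feeding $\E[|\xi|;A^c]\lesssim\gamma|\zeta_i|/\sqrt\pi$ into an estimate of $\E[\min(r_i^+-r_i,a^{-1});A^c]$ --- will not produce the stated constants. The problem is dimensional: once you drop the clip at $a^{-1}$, the term $\gamma^{-1}\E[|\xi|;A^c]$ is of order $|\zeta_i|$ (not $\gamma|\zeta_i|$), and after multiplying by the prefactor $\lambda/\m = a\beta^{-1/2}=\gamma\, a\mathcal R$ you are left with an extra factor $a\mathcal R\ge 4$ that has nowhere to go, so the coefficient of $|\zeta_i|^2$ ends up much larger than $\tfrac25\gamma$. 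The paper instead keeps the clip and applies the cruder but decisive bound $\E[\min(r_i^+-r_i,a^{-1});A^c]\le a^{-1}\Pr(A^c)$. The crucial cancellation is then $\tfrac{\lambda}{\m}a^{-1}=\beta^{-1/2}$, so that $\tfrac{\lambda}{\m}a^{-1}\Pr(A^c)\le\gamma|\zeta_i|/\sqrt{2\pi}$; one then multiplies and divides by $r_i$ and uses $|\zeta_i|\,r_i\le |\zeta_i|^2+\tfrac12\alpha^{-2}|q_i|^2$ (since $r_i^2=|\zeta_i|^2+\alpha^{-2}|q_i|^2$) and $1/\sqrt{2\pi}<2/5$ to reach the quoted form. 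You should discard the moment-based estimate of $|\xi|$ here; Lemma~\ref{lem:rejp}\eqref{ieq:rejpB} is not needed in this lemma (it is used elsewhere).

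One further caution: the displayed bound in the lemma is missing a factor of $|q_i|^2/\alpha^2$ multiplying $\bigl(\tfrac{3}{10}\tfrac{\lambda}{\gamma\m}-\tfrac15\bigr)$; as written the two summands inside the parentheses have inconsistent dimensions, and indeed the paper's later combination in Theorem~\ref{GCrho_tor} uses the corrected form with $|q_i|^2/\alpha^2$. Your reading of $\tfrac{3}{10}a\mathcal R-\tfrac15$ as a scalar "confining term" is an artifact of this typo; in the regrouping you were planning, that coefficient multiplies $|q_i|^2/\alpha^2$, coming directly from the acceptance contribution $-\tfrac{3}{10}\tfrac{\lambda}{\m}\alpha^{-2}|q_i|^2 f'_-(r_i)/r_i$ plus the $+\tfrac15\gamma\,\alpha^{-2}|q_i|^2 f'_-(r_i)/r_i$ part of the rejection bound.
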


\begin{proof}
Fix $y \in \mathbb{T}^{\m}_{\ell} \times \mathbb{R}^{3 \m}$.
Let $I \sim \Unif\{1,\dots,\m\}$, $\xi \sim \mathcal{N}(0,\beta^{-1})^{\n}$ and $\mathcal{U} \sim \Unif(0,1)$ 
be independent random variables. We set
$\tilde \xi = \Phi(\xi, \zeta_i, \mathcal{U})$ and introduce the shorthand $w'_i = \xi-\tilde \xi$.  
Since $\gamma = \beta^{-1/2} \mathcal{R}^{-1}$ by \eqref{eq::gamma}, 
\begin{equation} \label{eq:rejp}
 \Pr( w'_i \ne - \gamma \zeta_i )\  \le\ {\sqrt{\beta} \gamma |z_i|}/{\sqrt{2 \pi}}\  \le\ {\sqrt{\beta} \gamma \mathcal{R}}/{\sqrt{2 \pi}}\  \le\  {1}/{\sqrt{2 \pi}} \ < \ 2/5 \;.
\end{equation}

\paragraph{Bound for $r_i(y) > \mathcal{R}$.}
On $[\mathcal{R}, \infty)$, $f$ is constant and takes its maximum value.  Therefore, $f(r_i(y)) = f(\mathcal{R})$ and \begin{align}
    \mathcal{A}^C_{\gamma} & (f \circ r_i)( y ) \  = \ \lambda \E(f ( r_i ( \mathcal{S}(I,\xi,\mathcal{U}) y) ) - f( r_i(y) )  )  \nonumber \\
    \ & \le \ \frac{\lambda}{\m} \E\left( f(r_i(\mathcal{S}(i,\xi,\mathcal{U}) y)) - f(r_i(y) ) ; ~ w'_i = - \gamma \zeta_i \right) \label{ACbnd:rgeR}\\
    & \le \ \frac{\lambda}{\m} ( f(|\zeta_i|) - f(r_i) ) \Pr(w'_i = - \gamma \zeta_i) 
    \ \le \ - \frac{3}{5} \frac{\lambda}{\m} \left( 1 - \frac{f(\ell/2)}{f(\mathcal{R})} \right) f(r_i(y))  \nonumber
\end{align}
where in the last step we used $\mathcal{R} \ge \ell/2$ and \eqref{eq:rejp} which implies that $\Pr(w'_i = - \gamma \zeta_i) \ge 3/5$.
Since, by \eqref{eq::R}, $\mathcal{R}=1/a+\ell/2$, and using $1-e^{-1} \ge 3/5$, we have  \[
1 - f(\ell/2)/f(\mathcal{R}) = (1-e^{-1})/(e^{a \ell/2}-e^{-1}) \ge (3/5) e^{-a \ell/2} \;.
\] Inserting this inequality back into \eqref{ACbnd:rgeR} and eliminating $a$ using \eqref{eq::a} gives the required bound.  

\paragraph{Bound for $0 < r_i(y) \le \mathcal{R}$.} Let $r_i'=r_i(\mathcal{S}(i,\xi,\mathcal{U}) y)$ and write   \begin{equation} \label{ACbnd:rleR}
\mathcal{A}^C_{\gamma}  (f \circ r_i) = \rn{1} + \rn{2}~~\text{where}~~\begin{cases}
\rn{1} := \frac{\lambda}{\m} \E\left( f(r_i') - f(r_i ) ; ~ w'_i = - \gamma \zeta_i \right) \\
\rn{2} := \frac{\lambda}{\m} \E\left( f(r_i') - f(r_i ) ; ~ w'_i \ne - \gamma \zeta_i \right)
\end{cases}
\end{equation} For $\rn{1}$, note that on $\{ w_i' = - \gamma \zeta_i \}$,
\[
r_i' - r_i \ = \ |\zeta_i|-r_i\ =\  (|\zeta_i|^2 - r_i^2)/(|\zeta_i| + r_i) \ \le \ -\alpha^{-2} |q_i|^2 / (2 r_i) \;.
\] 
Combining this bound with \eqref{ieq:f} and
\eqref{eq:rejp}, we obtain \begin{equation} \label{ACbnd:rleR_1}
    \rn{1} \ \le \ - \frac{\lambda}{\m} \alpha^{-2} |q_i|^2 \frac{f_-'(r_i)}{2 r_i} \Pr(w_i' = - \gamma \zeta_i) \ \le \  - \frac{3}{10} \frac{\lambda}{\m} \alpha^{-2} |q_i|^2 \frac{f_-'(r_i)}{r_i} 
\end{equation}
For $\rn{2}$, use \eqref{ieq:f},  \eqref{eq:rejp} and \eqref{eq::a} to obtain \begin{align}\nonumber
 \rn{2}  &\le  \frac{\lambda}{\m} a^{-1} f_-'(r_i) \Pr( w_i' \ne - \gamma \zeta_i)  \le  \frac{\lambda}{\m} a^{-1} f_-'(r_i) \frac{\sqrt{\beta} \gamma |\zeta_i|}{\sqrt{2 \pi}}\\ & = \frac{\gamma f_-'(r_i) |\zeta_i| r_i}{ r_i \sqrt{2 \pi}}      
  \le \frac{2}{5} \gamma \left( |\zeta_i|^2 + \frac{\alpha^{-2}}{2} |q_i|^2 \right) \frac{f_-'(r_i)}{r_i} \;. \label{ACbnd:rleR_2}
\end{align}
Inserting \eqref{ACbnd:rleR_1} and \eqref{ACbnd:rleR_2} into \eqref{ACbnd:rleR} gives the required bound.  

\paragraph{Bound for $r_i(y)=0$.} In this case, $\zeta_i=w_i=0$, and thus, $\mathcal{S}(i, \xi, \mathcal{U}) y = y$, i.e., $\mathcal{A}^C_{\gamma} (f \circ r_i)(y) = 0$, as required.  
\end{proof}

\subsection{Regularity of distance function under flow of \eqref{odes:tor}}
\label{sec::proofs:regularity}

Here we prove Lemma~\ref{lem:cadlag} --- a key ingredient to controlling boundary effects for $|\zeta^i|=\ell/2$ and $w^i \ne 0$. The following remark is useful in the proof.

\noindent
\begin{minipage}{\textwidth}
\begin{minipage}{0.45\textwidth}
\begin{center}
\begin{tikzpicture}[scale=1.0]
      \draw[->] (-2.5,0) -- (2.5,0) node[right] {\large
$z_i$};
      \draw[->] (0,0) -- (0,1.75) node[right] {\large
$|\zeta_i|$};
            \draw[-] (-2,0.1) -- (-2,-0.1) node[below=0.1cm] {\normalsize
 $-\ell$};
      \draw[-] (-1,0.1) -- (-1,-0.1) node[below=0.1cm] {\normalsize
$-\ell/2$};
      \draw[-] (0,0.1) -- (0,-0.1) node[below=0.1cm] {\normalsize
$0$};
      \draw[-] (1,0.1) -- (1,-0.1) node[below=0.1cm] {\normalsize
$\ell/2$};
      \draw[-] (2,0.1) -- (2,-0.1) node[below=0.1cm] {\normalsize
$\ell$};
      \draw[-] (-0.1,1) -- (0.1,1) node[left=0.1cm] {\normalsize
$\ell/2$};
\foreach \a in {0,...,2}{
            \draw[scale=1.0,domain=(-2.5):(2.5),variable=\x,blue,thick] plot[samples=100] ({\x},{abs(\x-floor((\x+1)/2)*2)});
         }        
    \end{tikzpicture}
    \end{center}
\end{minipage}
\begin{minipage}{0.55\textwidth}
\noindent
As illustrated to the left, $(z_i, w_i) \mapsto |\zeta_i|$ is a contraction in the sense that 
\begin{equation} \label{zeta_i_contr}
\left| |\zeta_i| - |\tilde \zeta_i| \right| \le | z_i - \tilde z_i | \;. 
\end{equation} 
\end{minipage}
\end{minipage}

\begin{proof}[Proof of Lemma~\ref{lem:cadlag}]
Let $r^i_t = r_i(y_t)$ and $\zeta^i_t = \zeta_i(z_t,w_t)$ where $i \in \{1, \dots, \m\}$ is fixed and $y_t$ is the deterministic solution of \eqref{odes:tor} starting at $y_0=y$. Recall from \eqref{eq:ri} that $r^i_t = \sqrt{|\zeta^i_t|^2 + \alpha^{-2} |\zeta^i_t + \gamma^{-1} w^i_t|^2}$.  The function $t \mapsto (z^i_t, w^i_t)$ is continuous.  Moreover, $\zeta_i$ in \eqref{zeta_i} is continuous at points where $|\zeta_i| < \ell/2$.  Therefore, if $|\zeta^i_t| < \ell/2$, then $\zeta^i_t$ and $r^i_t$ are continuous at $t$.

Suppose, next, that at time $t$, $\zeta^i_t=-\ell/2$ and $w^i_t>0$.  Since $w^i_t > 0$, $z^i_t$ is strictly increasing in an open interval containing $t$.  Therefore, for sufficiently small $h>0$, \[
\zeta^i_{t+h} - \zeta^i_t = z^i_{t+h} - z^i_t \;, \quad \text{and} \quad \zeta^i_{t}  - \zeta^i_{t-h} = (z^i_{t} - \ell) - z^i_{t-h}  \;.
\] Hence, $\lim_{h \downarrow 0} (\zeta^i_{t+h} - \zeta^i_t) = 0$ while $\lim_{h \downarrow 0} (\zeta^i_{t} - \zeta^i_{t-h}) = -\ell$, and in particular, $(\zeta^i_t, w^i_t)$ is c\`{a}dl\`{a}g, and hence, $r^i_t$ is c\`{a}dl\`{a}g as well. Moreover, $(z_i, w_i) \mapsto |\zeta_i|$ is continuous because $(z_i, w_i) \mapsto |\zeta_i|$ is a contraction by \eqref{zeta_i_contr}, and thus, \begin{align*}
    |q^i_t|^2 \ = \ |\zeta^i_t + \gamma^{-1} w^i_t|^2 \ = \   |\zeta^i_t |^2 + \gamma^{-2} |w^i_t|^2 + 2 \gamma^{-1} \zeta^i_t w^i_t \ \le \ \lim_{s \uparrow t} |q^i_s|^2 
\end{align*}
because $w^i_t >0$ and $\zeta^i_t \le \zeta^i_{t-}$.  Therefore, $r^i_t \le \lim_{s \uparrow t} r^i_s$.  The case $\zeta^i_t = \ell/2$ and $w^i_t < 0$ can be treated similarly; in this case $\lim_{h \downarrow 0} ( \zeta^i_t - \zeta^i_{t-h}) = \ell$.

Finally, suppose that at time $t$, $\zeta^i_t = \ell/2$ and $w^i_t = 0$.  In this case, $r_i$ is itself continuous at $y_t$, and therefore, $s \mapsto r^i_s = r_i(y_s)$ is continuous at $t$.   Continuity of $r_i$ at $y_t$ follows from $|q_i(z,w)| \to |\zeta_i(z,w)|$ as $w_i \to 0$.   
\end{proof}

\begin{remark}
By Lemma~\ref{lem:cadlag}, $t \mapsto r_t^i$ is a c\`{a}dl\`{a}g trajectory.  Therefore, for any $\epsilon>0$ and for any $T>0$, the number of jumps of size greater than $\epsilon$, i.e.,  $\# \{ t \in [0,T] : |r^i_t - r^i_{t-}|>\epsilon \}$, is finite \cite{EberleLectureNotes2019}.  However, for a trajectory starting in $B_i$ where $(\zeta_0^i, w_0^i)=(\ell/2,0)$, it is still possible that there are infinitely many jumps in every interval $(0,h)$ with $h>0$, i.e., the underlying trajectory $t \mapsto (\zeta_t^i,w_t^i)$ may wind around the point $(\ell/2,0)$ infinitely often.  For the bounds on the deterministic part of the dynamics, we avoid this potential complication by selecting $\mathcal{R}$ and $\lambda$ such that $B_i \subset \{ r_i(y) > \mathcal{R} \}$ where $f(r_i(y)) = f(\mathcal{R})$ is constant and maximal; see Remark~\ref{Bi}.  
\end{remark}

\subsection{Bounds for Liouville operator acting on metric}

Since $r_i$ in \eqref{eq:ri} lacks continuity at boundary points where $|\zeta_i|=\ell/2$, the domain of $\mathcal{L}^C$ excludes $\rho$.  Nonetheless, by Lemma~\ref{lem:cadlag}, $t \mapsto r_t^i$ is a c\`{a}dl\`{a}g trajectory.  This c\`{a}dl\`{a}g time regularity motivates defining the following right-sided directional derivative of a function along the deterministic flow of \eqref{odes:tor}.

\begin{definition} \label{right_sided_LC}
For a function $g:\mathbb{T}^{\ell}_{\m} \to \mathbb{R}$, define \[
\mathscr{L}^C g(y) \ := \ \lim_{h \downarrow 0} \dfrac{g(\phi_h^C(y))-g(y)}{h} \quad \text{whenever the limit exists.}
\] 
\end{definition}

According to this definition, $\mathscr{L}^C ( f \circ r_i)$ is well-defined at most boundary points, and in particular,  \begin{equation} \label{LC_zetai}
\mathscr{L}^C \zeta_i (y) \ = \  w_i(y) \quad \text{for all $y \in (\mathbb{T}^{\m}_{\ell} \times \mathbb{R}^{3 \m}) \setminus B_i$} \;.
\end{equation}
This is because when the deterministic flow is at a boundary point at time $t$ with either $\zeta_i = -\ell/2$ and $w_i>0$, or $\zeta_i=\ell/2$ and $w_i<0$, there exists a time interval $[t,t+h)$ such that the trajectory $s \mapsto \zeta^i_s$ is streatly increasing, (respectively, strictly decreasing) on $[t,t+h)$, and hence, there exists an integer $k$ such that $\zeta^i_s = z^i_s+k \ell$ for all $s \in [t,t+h)$.  Moreover, \begin{equation} \label{LC_wi}
\mathscr{L}^C w_i (y) \ = \ \nabla_i U(\tilde x) -  \nabla_i U(x)  \quad \text{for all $y \in \mathbb{T}^{\m}_{\ell} \times \mathbb{R}^{3 \m}$} \;.
\end{equation}
Since $r_i = \sqrt{|\zeta_i|^2+\alpha^{-2} |\zeta_i + \gamma^{-1} w_i|^2}$, $r_i^2$ is a smooth function of $(\zeta_i, w_i)$, and $r_i$ is a smooth function of $(\zeta_i, w_i)$ except at $(\zeta_i, w_i)=(0,0)$.  Thus, $\mathscr{L}^C(r_i^2)$ exists for all $y \in (\mathbb{T}^{\m}_{\ell} \times \mathbb{R}^{3 \m}) \setminus B_i$ and $\mathscr{L}^C r_i$ exists for all $y \in (\mathbb{T}^{\m}_{\ell} \times \mathbb{R}^{3 \m}) \setminus (Z_i \cup B_i)$.  

Expanding on this point, by \eqref{LC_zetai} and \eqref{LC_wi}, for all $y \in (\mathbb{T}^{\m}_{\ell} \times \mathbb{R}^{3 \m}) \setminus B_i$,  \begin{align}
& \mathscr{L}^C (r_i^2) \, = \, 2 \Big( \zeta_i w_i + \alpha^{-2} q_i w_i + \frac{1}{\gamma \alpha^{2}} q_i (\nabla_i U(\tilde x) - \nabla_i U( x) ) \Big) \nonumber \\
\, &= \, 2 \gamma  \Big( - |\zeta_i|^2 + \frac{|q_i|^2}{\alpha^2} + (1-\alpha^{-2}) \zeta_i q_i + \frac{1}{\gamma^2 \alpha^{2}} q_i (\nabla_i U(\tilde x) - \nabla_i U( x) ) \Big) \nonumber \\
\, &\le \, 2 \gamma  \Big(- |\zeta_i|^2 + \frac{|q_i|^2}{\alpha^2} + \big( 1-\alpha^{-2}+ \frac{L}{\gamma^2 \alpha^{2}} \big) |\zeta_i| |q_i| + \frac{J |q_i|}{\gamma^2 \alpha^2} \sum_{k \ne i} |\zeta_k| \Big) \nonumber \\
\, &\le \, 2 \gamma  \Big(- |\zeta_i|^2 + \frac{|q_i|^2}{\alpha^2} + 2 \frac{\alpha^2 -1}{\alpha^2} |\zeta_i| |q_i| + \frac{J |q_i|}{\gamma^2 \alpha^2} \sum_{k \ne i} |\zeta_k| \Big)  \label{LC_ri2} 
\end{align} where, in turn, we eliminated $w_i$ using $w_i = \gamma (q_i - \zeta_i)$, used \eqref{ieq:LJ} to bound $|\nabla_i U(\tilde x) - \nabla_i U( x)|$, and used $L \gamma^{-2} = \alpha^2 - 1$ which follows from \eqref{eq::gamma} and \eqref{eq::alpha}.   For all $y \in (\mathbb{T}^{\m}_{\ell} \times \mathbb{R}^{3 \m}) \setminus (B_i \cup Z_i)$, the chain rule and \eqref{LC_ri2} imply \begin{align}
   \mathscr{L}^C r_i  \, &\le \, \frac{\gamma}{r_i} \Big(- |\zeta_i|^2 + \frac{|q_i|^2}{\alpha^2} + 2\frac{\alpha^2 -1}{\alpha^2} |\zeta_i| |q_i| + \frac{J |q_i|}{\gamma^2 \alpha^2} \sum_{k \ne i} |\zeta_k| \Big) \label{eq:LC_ri} \\  \,  &\le \,  \gamma \alpha r_i + \frac{J}{\gamma \alpha} \sum_{k \ne i} \min(r_k, \ell/2)
        \label{ieq:LC_ri} 
        \end{align}
        where we used $|q_i|/\alpha \le r_i$, $|\zeta_k| \le \min(r_k, \ell/2)$, and the inequality 
        \begin{align*}
- |\zeta_i|^2 + \frac{|q_i|^2}{\alpha^2} + 2\frac{\alpha^2 -1}{\alpha^2} |\zeta_i| |q_i|  \le \max\big( \frac{\alpha^2 - 1}{\delta \alpha^2} - 1 ,1 + \delta (\alpha^2-1) \big) r_i^2 \le \alpha r_i^2  \;,
\end{align*}
where $\delta>0$ satisfies $(\alpha^2-1)/(\delta \alpha^2) - 1 = 1 + \delta (\alpha^2-1) = \sqrt{\alpha^{-2} - 1 + \alpha^2} \le \alpha$, since $\alpha \ge 1$.
\begin{remark} \label{weak_LC_ri}
For $y \in (\mathbb{T}^{\m}_{\ell} \times \mathbb{R}^{3 \m}) \setminus B_i$, \eqref{ieq:LC_ri} holds in a weak sense.  To see this, approximate $r_i(y)$ by $r_{i,\epsilon}(y) \, := \, \varphi_{\epsilon}( ( r_i(y) )^2 )$ where $\epsilon>0$ is a small parameter and $\varphi_{\epsilon}$ is a $C^1$ function defined by $\varphi_{\epsilon}(x) = \sqrt{x}$ for $x \ge \epsilon^2$ and $\varphi_{\epsilon}(x) = \epsilon/2+x/(2 \epsilon)$ for $x \le \epsilon^2$.  By the standard chain rule, \[
 \mathscr{L}^C r_{i,\epsilon} \, = \,  \varphi_{\epsilon}'((r_i)^2) \mathscr{L}^C (r_i)^2 \, = \, \frac{1}{2 \max(r_i, \epsilon)}   \mathscr{L}^C (r_i)^2  \;,
\] and thus for any $y_0 \in (\mathbb{T}^{\m}_{\ell} \times \mathbb{R}^{3 \m})  \setminus B_i$ and $t \ge 0$ sufficiently small, \begin{align*}
r_{i,\epsilon}(y_t) - r_{i,\epsilon}(y_0)  =  \int\limits_0^t \frac{ \mathscr{L}^C (r_i)^2 (y_s)}{2 \max(r_s^i, \epsilon)} ds   \le   \int\limits_0^t \big( \gamma \alpha r^i_s + \frac{J}{\gamma \alpha} \sum_{k \ne i} \min(r^k_s,\ell/2) \big) ds . 
\end{align*} As $\epsilon \downarrow 0$, $r_{i,\epsilon}(y_t) \downarrow r_t^i$ and thus we obtain the same bound for $r_t^i$, i.e., \eqref{ieq:LC_ri} holds in a weak sense.  
\end{remark}

\begin{lemma} \label{LC_tor_lemma}
Suppose that $\lambda>0$ satisfies condition~\eqref{eq:lambcond_tor}.
For any $i \in \{1, \dots, \m\}$, for any initial condition $y \in \mathbb{T}^{\m}_{\ell} \times \mathbb{R}^{3 \m}$, for any $t>0$, and for any $s \in [0,t]$, 
 \begin{align} \label{LC_f_tor}
f(r^i_t) - f(r^i_s) \  &\le \ \int_s^t \mathrm{g}_i( y_u ) d u \;,~\text{where $\mathrm{g}_i: \mathbb{T}^{\m}_{\ell} \times \mathbb{R}^{3 \m} \to \mathbb{R}$ is given by} \\
    \mathrm{g}_i(y) \ &:= \ \begin{cases} \
    0 & \text{for $r_i > \mathcal{R}$} \;, \\
    f'(r_i(y)) \mathscr{L}^C r_i(y) & \text{for $0<r_i \le \mathcal{R}$} \;, \\
   \  J /( \gamma \alpha)  \sum_{k \ne i} \min(r_k(y), \ell/2) & \text{for $r_i=0$ }  \;.
    \end{cases} \label{gi}
\end{align}
\end{lemma}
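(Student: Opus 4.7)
The plan is to establish \eqref{LC_f_tor} by analyzing the c\`{a}dl\`{a}g trajectory $u \mapsto r^i_u := r_i(y_u)$, exploiting lower semi-continuity at jump times and combining a chain rule on smooth pieces with a smoothing argument near the non-smooth set $\{r_i = 0\}$.

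First, by Lemma \ref{lem:cadlag}, $r^i_\cdot$ is c\`{a}dl\`{a}g with $r^i_u \le r^i_{u-}$ at every jump time. Since $f$ is nondecreasing, each jump of $f \circ r^i_\cdot$ is downward and contributes nonpositively to $f(r^i_t) - f(r^i_s)$. Hence it suffices to bound the continuous part of the increment by $\int_s^t \mathrm{g}_i(y_u) \, du$.

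Next, on the smooth portions of the trajectory I would decompose $[s,t]$ according to the three regions in \eqref{gi}. On open subintervals where $r^i_u > \mathcal{R}$, $f$ is constant at its maximum, so $\tfrac{d}{du}(f \circ r^i_u) = 0 = \mathrm{g}_i(y_u)$. On open subintervals where $0 < r^i_u < \mathcal{R}$, Remark \ref{Bi} ensures $y_u \notin B_i$, and $r_i$ is smooth in $(\zeta_i,w_i)$ away from the origin, so the ordinary chain rule yields $\tfrac{d}{du}(f \circ r^i_u) = f'(r^i_u) \mathscr{L}^C r_i(y_u) = \mathrm{g}_i(y_u)$, matching the second case.

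To handle the set $\{u \in [s,t] : r^i_u = 0\}$, where $r_i$ is not differentiable as a function of $(\zeta_i,w_i)$, I would apply the smoothing from Remark \ref{weak_LC_ri}. Setting $r_{i,\epsilon} := \varphi_\epsilon(r_i^2)$ produces a $C^1$ approximation of $r_i$ off $B_i$, with the chain rule giving $\tfrac{d}{du}(f \circ r_{i,\epsilon})(y_u) = f'(r_{i,\epsilon}(y_u))\, \mathscr{L}^C r_i^2(y_u) / (2\max(r^i_u,\epsilon))$. Combining $\mathscr{L}^C r_i^2 = 2r_i \mathscr{L}^C r_i$ on $\{r_i > 0\}$ with the bound \eqref{LC_ri2} (whose right-hand side vanishes at $r_i = 0$ together with $\zeta_i$ and $q_i$), the integrand is dominated by $f'(r_{i,\epsilon}) \cdot (r^i_u / \max(r^i_u,\epsilon)) \cdot (\gamma \alpha r^i_u + (J/(\gamma \alpha)) \sum_{k \ne i} \min(r^k_u, \ell/2))$, which is uniformly bounded in $\epsilon$ by an integrable function on $[s,t]$. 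Passing $\epsilon \downarrow 0$ by dominated convergence, on $\{r^i_u > 0\}$ the integrand tends to $f'(r^i_u) \mathscr{L}^C r_i(y_u)$ (the second case), while on $\{r^i_u = 0\}$ it is dominated by $f'(0) \cdot (J/(\gamma\alpha)) \sum_{k \ne i} \min(r^k_u, \ell/2) = (J/(\gamma\alpha)) \sum_{k \ne i} \min(r^k_u, \ell/2)$, using $f'(0) = 1$ and $|\zeta_k| \le \min(r_k, \ell/2)$; this matches the third case of \eqref{gi}.

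The main obstacle is precisely the behavior at $\{r_i = 0\}$, where $r_i$ is not differentiable and one must quantify how quickly the trajectory can escape zero through the coupling to other particles (captured by the $J$-term). The smoothing converts this into a controlled calculation with $\mathscr{L}^C r_i^2$, which is smooth, and whose explicit bound from \eqref{LC_ri2} degenerates at $r_i = 0$ in exactly the right way: all quadratic-in-$(\zeta_i,q_i)$ terms vanish, and only the term involving $J \sum_{k \ne i} |\zeta_k|$ (paired with $|q_i|$) contributes in the limit, producing exactly the third case of $\mathrm{g}_i$.
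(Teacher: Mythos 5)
Your proposal is correct in spirit and uses the same basic ingredients (Lemma \ref{lem:cadlag}, the bound \eqref{ieq:LC_ri}, and the smoothing from Remark \ref{weak_LC_ri}), but it takes a genuinely different route from the paper. The paper proceeds by contradiction: it fixes $\epsilon > 0$, replaces $\mathrm{g}_i$ by the larger $\mathrm{g}_{i,\epsilon}$ (which equals $\gamma\alpha r_i + (J/(\gamma\alpha))\sum_{k\ne i}\min(r_k,\ell/2)$ on $\{r_i\le\epsilon\}$), defines $\tau$ as the supremum of times up to which the integral inequality holds, and shows in five cases—organized by the value of $r^i_\tau$ and whether $|\zeta^i_\tau|<\ell/2$ or $=\ell/2$—that the inequality always extends past $\tau$, forcing $\tau=\infty$; then $\epsilon\downarrow 0$. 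Your approach is instead a direct Lebesgue-style decomposition: downward jumps of $f\circ r^i_\cdot$ (from lower semi-continuity and monotonicity of $f$) contribute nonpositively, and between jumps one integrates the derivative, with the $\varphi_\epsilon$-smoothing handling $\{r_i=0\}$. Both work; the paper's $\tau$-argument is better structured to avoid the technicalities that your approach leaves implicit. In particular, to make your gluing rigorous you would still need to (a) verify that $f\circ r^i_\cdot$ is of bounded variation so that the continuous-plus-jump decomposition is licit—this follows from boundedness of $f'\,\mathscr{L}^C r_i$ on $\{r_i\le\mathcal{R}\}$ and boundedness of $f$, but it must be stated; (b) observe explicitly that the possibly infinite accumulation of jump times near $B_i$ is harmless because $B_i\subset\{r_i>\mathcal{R}\}$ (Remark \ref{Bi}) and $f\circ r_i$ is \emph{constant} there, so those jumps contribute exactly zero; (c) note that the ``ordinary chain rule'' on $\{0<r^i_u<\mathcal{R}\}$ is really the one-sided derivative $\mathscr{L}^C r_i$, valid between jumps where $|\zeta^i_u|<\ell/2$; and (d) address the convergence $f'(r_{i,\epsilon})\to f'(r_i)$, which holds a.e.\ but fails at the kink $r_i=\mathcal{R}$ (a measure-zero set in $r_i$, though one must argue the corresponding time set is null or that the Lipschitz bound suffices). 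One small but harmless imprecision: on $\{r^i_u=0\}$ the limiting integrand is actually $0$ (since $r^i_u/\max(r^i_u,\epsilon)=0$ there for every $\epsilon>0$), which is \emph{stronger} than the $J$-term stated in \eqref{gi}, so your ``dominated by'' claim is correct but the limit itself is not the $J$-term.
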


Lemma~\ref{LC_tor_lemma} states that in the weak sense $\mathcal{L}^C (f \circ r_i) \le \mathrm{g}_i$.  We  know this holds with equality for $y \in \mathbb{T}^{\m}_{\ell} \times \mathbb{R}^{3 \m}$ such that $|\zeta_i|<\ell/2$ and $y \notin Z_i$.  This lemma extends this equality to an inequality that is valid globally.

\begin{proof}
It suffices to prove \eqref{LC_f_tor} for $s=0$, and use the flow property to write $f(r^i_t) - f(r^i_s) = f \circ r_i( \phi^C_t(y) ) - f \circ r_i(\phi^C_s(y)) = f \circ r_i(\phi^C_{t-s}(y_s)) -f \circ r_i (y_s)$, i.e., start the underlying flow with initial condition $y_s$ instead of $y$.  From now on, we assume w.l.o.g.~that $s=0$. Fix an $\epsilon>0$ and introduce the function $\mathrm{g}_{i,\epsilon}: \mathbb{T}^{\m}_{\ell} \times \mathbb{R}^{3 \m} \to \mathbb{R}$ defined by \begin{equation*}
    \mathrm{g}_{i,\epsilon}(y) \, := \, \begin{cases} \
    \mathrm{g}_i(y) & \text{$r_i > \epsilon$,}  \\
   \ \gamma \alpha r_i(y) + J /( \gamma \alpha)  \sum_{k \ne i} \min(r_k(y), \ell/2) & \text{$r_i \le \epsilon$.} 
    \end{cases}
\end{equation*}
Below we prove \eqref{LC_f_tor} holds with $\mathrm{g}_i$ replaced with $\mathrm{g}_{i,\epsilon}$, i.e., \begin{equation}
    \label{LC_f_tor_2}
f(r^i_t) - f(r^i_0) \le \int_0^t \mathrm{g}_{i,\epsilon}( y_u ) d u \;.
\end{equation} Then \eqref{LC_f_tor}  follows since as $\epsilon \downarrow 0$ we have $\mathrm{g}_{i,\epsilon} \downarrow \mathrm{g}_{i}$. 
Define  \[
\tau \, := \, \sup\left\{ u \ge 0 : \text{\eqref{LC_f_tor_2} holds for all $t \in [0,u]$} \right\} \;.  
\] We will prove $\tau=\infty$ by contradiction.  Hence suppose $\tau < \infty$.  By Lemma~\ref{lem:cadlag} and monotonicity of $f$, \eqref{LC_f_tor_2} holds for all $t \in [0, \tau]$ with $\tau$ included.  Indeed, by definition of $\tau$, \eqref{LC_f_tor_2} holds for $t<\tau$.  Moreover, by Lemma~\ref{lem:cadlag}, \[
f(r^i_{\tau}) - f(r_0^i) \le \lim_{t \uparrow \tau} f(r^i_t) - f(r_0^i) \;.
\] Thus, since the r.h.s.~of \eqref{LC_f_tor_2} is continuous in $t$, this bound extends from $t<\tau$ to $t=\tau$.  

\medskip

Now we distinguish several cases depending on the size of $r_{\tau}^i$.
\paragraph{Case (i): $r_{\tau}^i > \mathcal{R}$} 
Note, first, that this case includes $r_{\tau}^i \in B_i$ by Remark~\ref{Bi}.  In this case, by right continuity of $t \mapsto r^i_t$, there exists $h>0$ such that for all $t \in [\tau, \tau+h]$ we have $r^i_t>\mathcal{R}$, and hence, $f'(r^i_t)=0$ and $f(r^i_t) = f(\mathcal{R}) = f(r^i_{\tau})$.  Inserting these results into \eqref{LC_f_tor_2} gives $f(r^i_t) - f(r^i_0) \le \int_0^{\tau} \mathrm{g}_{i,\epsilon}( y_u ) d u = \int_0^{t} \mathrm{g}_{i,\epsilon}( y_u ) d u$. Thus, \eqref{LC_f_tor_2} holds for all $t \in [\tau, \tau + h]$, which contradicts the definition of $\tau$.  

\paragraph{Case (ii): $\epsilon < r_{\tau}^i \le \mathcal{R}$ and $|\zeta_{\tau}^i| < \ell/2$}
In this case, there exists $h>0$ such that  $|\zeta^i_t|<\ell/2$ and $r^i_t > \epsilon$ for all $t \in [\tau-h, \tau+h]$.  Therefore, $t \mapsto r^i_t$ is smooth on this interval, and thus for  $t \in [\tau-h, \tau+h]$,  since $f$ is Lipschitz continuous\footnote{From \eqref{eq:f}, note that $f$ is a composition of two Lipschitz functions, and hence,  \[
|f(r) - f(s)| = a^{-1} |e^{-a (r \wedge \mathcal{R})} - e^{-a (s \wedge \mathcal{R})}| \le a | r \wedge \mathcal{R} - s \wedge \mathcal{R}| \le a | r - s|  \quad \text{for all $r, s \in [0,\infty)$} \;.
\]}, it is also absolutely continuous, and therefore, $f(r^i_t) - f(r^i_{\tau}) = \int_{\tau}^t f'(r^i_u) \, \mathscr{L}^C r_i (y_u) \, du$.  Thus, \eqref{LC_f_tor_2} holds for all $t \in [\tau, \tau + h]$, which contradicts the definition of $\tau$.  
Here we used that every Lipschitz continuous function is absolutely continuous. 


\paragraph{Case (iii): $\epsilon < r_{\tau}^i \le \mathcal{R}$ and $|\zeta_{\tau}^i|=\ell/2$} This case can be treated similarly to case (ii).  Note, first, that $\ell/2 \le r_{\tau}^i \le \mathcal{R}$, and hence, $r_{\tau}^i \notin B_i$ by Remark~\ref{Bi}.  Suppose, for example, that $\zeta^i_{\tau}=-\ell/2$ and $w^i_{\tau}>0$.  Then $z^i_t$ is strictly increasing for $t$ near $\tau$.  Therefore, for $t \in [\tau, \tau+h]$ with $h$ sufficiently small, $\zeta^i_{t}$ is strictly increasing, $\zeta^i_{t} \in (-\ell/2,0)$ and $r^i_{t}>\epsilon$. In particular, for $t \in [\tau, \tau+h]$, $\zeta^i_t = z^i_t + k \ell$ for a fixed integer $k$, and thus, $r^i_t$ is a smooth function for all $t \in [\tau, \tau + h]$.   (The only difference to case (ii) is that now,  smoothness of $r^i_t$ holds only for $t \in [\tau, \tau + h]$, and not for $t \in [\tau - h, \tau +h]$.)  Now we can argue completely analogously to case (ii) to conclude that \eqref{LC_f_tor_2} holds for all $t \in [\tau, \tau + h]$, which contradicts the definition of $\tau$.

\paragraph{Case (iv): $0 < r_{\tau}^i \le \epsilon$} 
In this case, there exists $h>0$, such that $r^i_t$ is smooth for $t \in [\tau - h, \tau+h]$, and similar to case (ii), $f(r^i_t) - f(r^i_{\tau}) = \int_{\tau}^t f'(r^i_u) \, \mathscr{L}^C r_i (y_u) \, du$.  However, we only have $r^i_{t}>0$.  This motivates inserting $0 \le f' \le 1$ into the bound in \eqref{ieq:LC_ri} to obtain that for $t \in [\tau - h, \tau+h]$ \[
f'(r^i_t) \mathscr{L}^C r_i (y_t) \le \gamma \alpha r_t^i + J/(\gamma \alpha) \sum_{k \ne i} \min(r_t^k, \ell/2) \;. 
\]  Thus, \eqref{LC_f_tor_2} holds for all $t \in [\tau, \tau + h]$, which contradicts the definition of $\tau$. 

\paragraph{Case (v): $r_{\tau}^i=0$} 
In this case $r^i_t$ is not smooth at $\tau$, but as noted in Remark~\ref{weak_LC_ri} the bound in \eqref{ieq:LC_ri} can still be applied in a weak sense.  In particular, for $t \in [\tau, \tau+h]$ with $h$ sufficiently small we have $r^i_t < \epsilon$ and \[
f(r^i_t) - f(r^i_{\tau}) \, \le \,
\int_{\tau}^t \big( \gamma \alpha r^i_u + J/(\gamma \alpha) \sum_{k \ne i} \min(r^k_u,\ell/2) \big) du \;.
\]  Hence, we see again that for $h$ sufficiently small, \eqref{LC_f_tor_2} extends to $t \in [\tau, \tau+h]$, but this contradicts the definition of $\tau$.
\end{proof}

\subsection{Combined bounds for generator of Andersen dynamics on $\mathbb{T}^{\m}_{\ell}$ acting on metric}

The following theorem uses Lemmas~\ref{AC_tor_lemma} and~\ref{LC_tor_lemma} to bound, in the weak sense, the generator of Andersen dynamics acting on $\rho$.

\begin{theorem} \label{GCrho_tor}
Suppose that $\lambda>0$ satisfies condition~\eqref{eq:lambcond_tor}, and \eqref{eq:Jbound} holds. Then for every  $y \in \mathbb{T}^{\m}_{\ell} \times \mathbb{R}^{3 \m}$ \begin{align*}
& \sum_i \left( \mathrm{g}_{i} + \mathcal{A}^C_{\gamma} (f \circ r_i) \right)   \le  - \cA   \sum_i f(r_i),~
\text{where $\cA $ is defined in \eqref{eq:rate_tor}.}
\end{align*}
\end{theorem}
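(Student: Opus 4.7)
The plan is to combine the pointwise bounds from Lemma~\ref{AC_tor_lemma} and Lemma~\ref{LC_tor_lemma} for each index $i$, decomposing the sum according to the three regions $\{r_i > \mathcal{R}\}$, $\{0 < r_i \le \mathcal{R}\}$, and $Z_i = \{r_i = 0\}$, and then controlling the off-diagonal interaction terms using the smallness of $J$ imposed by \eqref{eq:Jbound}. In the region $\{r_i > \mathcal{R}\}$, $\mathrm{g}_i = 0$ and Lemma~\ref{AC_tor_lemma} gives $\mathrm{g}_i + \mathcal{A}^C_\gamma(f\circ r_i) \le -\frac{9}{25}\frac{\lambda}{\m}\exp(-\beta^{1/2}(\lambda/\m)(\ell/2))\,f(r_i)$, which is already at least $32\,\cA\,f(r_i)$ below zero, leaving ample slack for interaction terms.

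In $\{0 < r_i \le \mathcal{R}\}$, I would combine $\mathrm{g}_i = f'(r_i)\mathscr{L}^C r_i$ with the sharp bound \eqref{eq:LC_ri} and the interior bound from Lemma~\ref{AC_tor_lemma}. Ignoring the interaction term, this produces (after multiplication by $r_i/(\gamma f'(r_i))$) a quadratic form in $(|\zeta_i|,|q_i|/\alpha)$. Here the two key identities are $\lambda/(\gamma\m) = \beta^{1/2}(\lambda/\m)(\ell/2) + 1$ (from the definitions \eqref{eq::R}, \eqref{eq::gamma}) and $\alpha^2 - 1 = \beta L\mathcal{R}^2$ (from \eqref{eq::alpha}). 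Using these, condition \eqref{eq:lambcond_tor} forces $\frac{3}{10}(\lambda/(\gamma\m)) - \frac{6}{5}$ to exceed a suitable positive multiple of $1 + (\alpha^2-1)$, which in turn allows the cross term $2\frac{\alpha^2-1}{\alpha^2}|\zeta_i||q_i|$ to be handled by Young's inequality while leaving strictly negative coefficients on both $|\zeta_i|^2$ and $|q_i|^2/\alpha^2$. Using $r_i^2 = |\zeta_i|^2 + \alpha^{-2}|q_i|^2$, the diagonal part becomes at most $-c\gamma f'(r_i) r_i$ for an explicit $c > 0$; then $f'(r_i) \ge e^{-a\mathcal{R}}$ and the inequality $f(r)\le r$ together with the comparison $r \ge (ar\wedge 1)/a \ge af(r)/a = f(r)$ readily yield a bound of the form $-C\cA\,f(r_i)$.

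The interaction contributions come from two places: the term $\gamma f'(r_i) r_i^{-1}(J|q_i|/(\gamma^2\alpha^2))\sum_{k\ne i}|\zeta_k|$ inside $\{0 < r_i \le \mathcal{R}\}$, and the full expression $\mathrm{g}_i = (J/(\gamma\alpha))\sum_{k\ne i}\min(r_k,\ell/2)$ on $Z_i$. Using $|q_i|/(\alpha r_i) \le 1$ and $f'(r_i) \le 1$, both are bounded by $(J/(\gamma\alpha))\sum_{k\ne i}\min(r_k,\ell/2)$. Swapping the order of summation then gives a total off-diagonal contribution at most $(J(\m-1)/(\gamma\alpha))\sum_k\min(r_k,\ell/2)$. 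Concavity of $f$ and $f(\ell/2) = (1-e^{-a\ell/2})/a$ convert this into a multiple of $\sum_k f(r_k)$ via $\min(r_k,\ell/2) \le (\ell/2)/f(\ell/2)\cdot f(r_k) \le (a\ell/2)e^{a\ell/2}\,f(r_k)$. Plugging in $a = \beta^{1/2}\lambda/\m$, $\gamma\alpha \ge \gamma = 1/(\beta^{1/2}\mathcal{R})$, and $\mathcal{R}\le (4/3)(\ell/2)$, the coefficient in front of $\sum_k f(r_k)$ becomes a constant times $J(\m-1)\beta\ell^2\,\alpha^{-1}\exp(\beta^{1/2}(\lambda/\m)(\ell/2))\cdot(\lambda/\m)$; condition \eqref{eq:Jbound} is engineered so that this is at most a small fraction of $\cA$.

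The main obstacle I anticipate is purely bookkeeping: after the Young splitting and the conversions between the natural variables $r_i,|\zeta_i|,|q_i|$ and between $r_i$ and $f(r_i)$, one must match the numerical constant $1/90$ in $\cA$ and the constant $1/(75(\m-1)\beta\ell^2)$ in \eqref{eq:Jbound}. The slack of $32\cA$ on $\{r_i>\mathcal{R}\}$ together with the two conditions \eqref{eq:lambcond_tor} and \eqref{eq:Jbound} appears to be the just-sufficient margin to close all three regions simultaneously into the uniform bound $-\cA\sum_i f(r_i)$.
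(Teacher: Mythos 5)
Your decomposition by region and the way you propagate Lemmas~\ref{AC_tor_lemma} and~\ref{LC_tor_lemma} through each of $\{r_i>\mathcal{R}\}$, $\{0<r_i\le\mathcal{R}\}$, $\{r_i=0\}$, and your treatment of the cross-terms by swapping the order of summation, is the same plan the paper follows. The observation that the quadratic form must be shown nonnegative under \eqref{eq:lambcond_tor}, and the identities $\lambda/(\gamma\m)=\beta^{1/2}(\lambda/\m)(\ell/2)+1$ and $\alpha^2-1=\beta L\mathcal{R}^2$, are exactly the right levers.

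There is, however, one step that genuinely fails: converting $\min(r_k,\ell/2)$ into a multiple of $f(r_k)$. You bound
\[
\frac{\ell/2}{f(\ell/2)}\;=\;\frac{a\ell/2}{1-e^{-a\ell/2}}\;\le\;\left(\frac{a\ell}{2}\right)e^{a\ell/2},
\]
which is a valid inequality but far too lossy. With this bound the interaction penalty picks up an extra factor $e^{a\ell/2}=e^{\beta^{1/2}(\lambda/\m)(\ell/2)}$. After inserting condition~\eqref{eq:Jbound}, which supplies exactly one factor $e^{-\beta^{1/2}(\lambda/\m)(\ell/2)}$, that exponential cancels completely, leaving a penalty of order $\frac{\lambda}{\m}$ times a fixed constant. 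Meanwhile the diagonal gain $c_0$ and the target rate $\cA$ both carry the surviving factor $e^{-\beta^{1/2}(\lambda/\m)(\ell/2)}\le e^{-25/6}\approx 0.015$. The penalty therefore dominates the gain, and $c_0-\text{(penalty)}\ge\cA$ cannot hold. The correct step uses the lower bound $\beta^{1/2}(\lambda/\m)(\ell/2)\ge 25/6>4$ from \eqref{eq:lambcond_tor} to bound $1/(1-e^{-a\ell/2})\le 1/(1-e^{-4})<51/50$, giving
\[
\frac{\ell/2}{f(\ell/2)}\;\le\;\frac{51}{50}\,\frac{a\ell}{2},
\]
with no exponential. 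With this replacement (and using the full lower bound $\gamma\alpha\ge\max(L^{1/2},\beta^{-1/2}\mathcal{R}^{-1})$ rather than only $\gamma\alpha\ge\gamma$, which is what actually cancels the $\max(\sqrt{\beta L\ell^2},1)$ factor built into~\eqref{eq:Jbound}), the numbers close with the constants $1/90$ and $1/75$ as stated. Without it, the argument cannot reach the conclusion of the theorem.
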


Theorem~\ref{GCrho_tor} states that in the weak sense \[
\frac{d}{dt} \rho(Y_t) = \mathcal{G}^C_{\gamma} \rho(Y_t) \le  - \cA  \rho(y) \;,
\] for every initial condition $y \in \mathbb{T}^{\m}_{\ell} \times \mathbb{R}^{3 \m}$.

\begin{proof}
First we combine the bounds from Lemmas~\ref{AC_tor_lemma} and~\ref{LC_tor_lemma} to obtain a global component-wise bound, and then sum over these bounds to obtain an overall global bound on the generator of Andersen dynamics acting on $\rho$.

\paragraph{Bound for $r_i>\mathcal{R}$}
Applying Lemmas~\ref{AC_tor_lemma} and~\ref{LC_tor_lemma} in this case gives \begin{equation} \label{ri_large}
\mathrm{g}_{i} + \mathcal{A}^C_{\gamma} (f \circ r_i) \, \le \,  - \tilde{c}_0 f(r_i) \;,~\text{where $\tilde{c}_0 \, := \, \frac{9}{25} \frac{\lambda}{\m} \exp\left(- \beta^{1/2} \frac{ \lambda}{\m} \frac{\ell}{2} \right)$} \;.      
\end{equation}

\paragraph{Bound for $0<r_i \le \mathcal{R}$}
By Lemma~\ref{LC_tor_lemma} and \eqref{eq:LC_ri}, \begin{align*}
&    \mathrm{g}_{i}  = f'(r_i) \mathscr{L}^C r_i \le -\gamma  \frac{f'(r_i)}{r_i}  \big( |\zeta_i|^2 - \frac{|q_i|^2}{\alpha^2} + 2\frac{1-\alpha^2}{\alpha^2} |\zeta_i| |q_i| - \frac{J |q_i|}{\gamma^2 \alpha^2} \sum_{k \ne i} |\zeta_k|  \big) 
\end{align*} 
Combining this bound with Lemma~\ref{AC_tor_lemma}, we obtain \begin{align*}
 \mathrm{g}_{i} + \mathcal{A}^C_{\gamma} (f \circ r_i)  \, \le \,   - \gamma \frac{f'(r_i)}{r_i} \Big( \frac{1}{20} r_i^2 + Q(|\zeta_i|, |q_i|) -  \frac{J}{\gamma^{2} \alpha^{2}} \sum_{k \ne i} |\zeta_k| |q_i| \Big)  \;.
\end{align*}
Here we have introduced the quadratic form \begin{align*}
& Q(|\zeta_i|, |q_i|) \, := \, \frac{11}{20} |\zeta_i|^2 + \left( \frac{3}{10} \frac{\lambda}{\gamma \m} - \frac{5}{4} \right) \frac{|q_i|^2}{\alpha^2} - 2 \frac{\alpha^2 -1}{\alpha^2}  |\zeta_i| |q_i| \\
& \quad \, = \, \frac{11}{20} |\zeta_i|^2 + \left( \frac{3}{10} \frac{\beta^{1/2} \lambda \mathcal{R}}{\m} - \frac{5}{4} \right) \frac{|q_i|^2}{1+\beta L \mathcal{R}^2} - 2 \frac{\beta L \mathcal{R}^2}{1+\beta L \mathcal{R}^2} |\zeta_i| |q_i| 
\end{align*}
where in the last expression we eliminated $\gamma$ and $\alpha$ using \eqref{eq::gamma} and \eqref{eq::alpha}.  This quadratic form is nonnegative provided that \[
\left( \frac{\beta L \mathcal{R}^2}{1+\beta L \mathcal{R}^2} \right)^2 \le \frac{33}{200} \left( \frac{\beta^{1/2} \lambda \mathcal{R}}{\m} - \frac{25}{6}  \right) \frac{1}{1+\beta L \mathcal{R}^2} \;.
\] A sufficient condition for this condition to hold is \begin{equation} \label{A1}
   \frac{\beta^{1/2} \lambda \mathcal{R}}{\m}   
    \ge \frac{25}{6} + \frac{200}{33} \beta L \mathcal{R}^2 \;.
\end{equation} Moreover, since (as noted in Remark~\ref{Bi}) $\mathcal{R} \le (4/3) (\ell/2)$, condition~\eqref{eq:lambcond_tor} implies condition~\eqref{A1} because \[
\frac{\beta^{1/2} \lambda \mathcal{R}}{\m} \ge 
\frac{\beta^{1/2} \lambda}{\m} \frac{\ell}{2} \ge 
\frac{25}{6} + 11 \frac{9}{16} \beta L \mathcal{R}^2 \ge
\frac{25}{6} + \frac{200}{33} \beta L \mathcal{R}^2 \;.
\]
Thus, under condition~\eqref{eq:lambcond_tor}, we obtain \begin{align}
& \mathrm{g}_{i} + \mathcal{A}^C_{\gamma} (f \circ r_i) \, \le \,
 - \gamma \frac{f'(r_i)}{r_i} \Big( \frac{1}{20} r_i^2 -  \frac{J}{\gamma^{2} \alpha^{2}} \sum_{k \ne i} |\zeta_k| |q_i| \Big)  \nonumber \\
 &\le -\frac{\gamma}{20} r_i f'(r_i) + \frac{J}{\gamma \alpha} \sum_{k \ne i} |\zeta_k|  = 
 -\frac{\gamma}{20} \frac{a r_i}{e^{a r_i} - 1} f(r_i)  + \frac{J}{\gamma \alpha} \sum_{k \ne i} |\zeta_k| 
 \nonumber \\
 &\le  -\tilde{\tilde{c}}_0 f(r_i) + \frac{J}{\gamma \alpha} \sum_{k \ne i} \min(r_k, \ell/2) \;,~\text{where $\tilde{\tilde{c}}_0 \, := \, \frac{\gamma}{20} \frac{a \mathcal{R}}{e^{a \mathcal{R}} - 1}$} \;,  \nonumber 
\end{align}
where we used monotonicity of $\mathsf{x}/(e^{\mathsf{x}} -1 )$ for $\mathsf{x}>0$. By \eqref{eq::R} and \eqref{eq::a},  \[
c_0:=\frac{\lambda}{55 \m} e^{-\beta^{1/2} \frac{\lambda}{\m} \frac{\ell}{2} } \le \tilde{\tilde{c}}_0 = \frac{\lambda}{20 \m} \frac{e^{-\beta^{1/2} \frac{\lambda}{\m} \frac{\ell}{2} }}{e  - e^{-\beta^{1/2} \frac{\lambda}{\m} \frac{\ell}{2} }} \le 
\frac{\lambda}{20 \m} e^{-\beta^{1/2} \frac{\lambda}{\m} \frac{\ell}{2} } < \tilde{c}_0 \;. 
\] 
Hence, the following bound holds for $r_i>0$, \begin{equation} \label{ri_small}
    \mathrm{g}_{i} + \mathcal{A}^C_{\gamma} (f \circ r_i) \, \le \, -c_0 f(r_i) + J /( \gamma \alpha) \sum_{k \ne i} \min(r_k, \ell/2) \;.
\end{equation}

\paragraph{Bound for $r_i=0$} 
Applying Lemmas~\ref{AC_tor_lemma} and~\ref{LC_tor_lemma} in this case gives \begin{equation} \label{ri_zero}
\mathrm{g}_{i} + \mathcal{A}^C_{\gamma} (f \circ r_i) \, \le \, J /( \gamma \alpha)  \sum_{k \ne i} \min(r_k, \ell/2) \;.    
\end{equation}
Thus, the component-wise bound in \eqref{ri_small} holds globally under condition~\eqref{eq:lambcond_tor}.

\paragraph{Overall global bound} Summing over the component-wise bounds in \eqref{ri_small}, \begin{align*}
  \sum_i   \Big( \mathrm{g}_{i} &+ \mathcal{A}^C_{\gamma} (f \circ r_i) \Big) \, \le \,  -c_0 \sum_i f(r_i) + J /( \gamma \alpha) (\m - 1) \sum_i \min(r_i, \ell/2) 
  \;, \\
  \, &\le \, \Big( -c_0 + \frac{J}{\gamma \alpha} (\m - 1) \frac{\ell/2}{f(\ell/2)} \Big)  \sum_i f(r_i)
  \;, \\
    \, &\le \, \Big( -c_0 + \frac{J}{\gamma \alpha} (\m - 1) \frac{\beta^{1/2} \frac{\lambda}{\m} \frac{\ell}{2}}{1-e^{-\beta^{1/2} \frac{\lambda}{\m} \frac{\ell}{2}}} \Big)  \sum_i f(r_i)  \;, \\
    \, &\le \, 
- \Big( c_0 -  \frac{51}{50} \frac{J}{\gamma \alpha} (\m - 1) \beta^{1/2} \frac{\lambda}{\m} \frac{\ell}{2} \Big)     
    \, \le \,-\cA   \sum_i f(r_i) \;, ~\text{as required.}
\end{align*}
Here, in turn, we used that $\min(r_i, \ell/2) \le (\ell/2) (f(r_i)/f(\ell/2))$; condition~\eqref{A1}, which implies $\beta^{1/2} (\lambda/\m) (\ell/2) \ge 25/6 > 4$ and hence $e^{-\beta^{1/2} (\lambda/\m) (\ell/2)} < e^{-4}$ and by \eqref{eq::R}, $\mathcal R\le \ell /2+\ell /8<\ell$. The last inequality then
follows from \eqref{eq:Jbound} which implies that
\begin{equation*}
   {J}\ \le\ \frac{1}{150(\m -1)}\frac{2}{\ell}\max\left({L}^{1/2},\beta^{-1/2}\mathcal R^{-1}\right)\beta^{-1/2}
    \exp\left(- \beta^{1/2} \frac{ \lambda}{\m} \frac{\ell}{2}\right),
\end{equation*}
and since $\gamma \alpha = \beta^{-1/2} \mathcal{R}^{-1}\sqrt{1+\beta L \mathcal{R}^2} \ge \max(L^{1/2}, \beta^{-1/2} \mathcal{R}^{-1})$.
\end{proof}

\subsection{Proof of main contraction result for Andersen dynamics on $\mathbb{T}^{\m}_{\ell}$}

\label{SEC:TOR_PROOF}

In this part, we show that $M_t \, = \, e^{\cA \, t} \sum_{i} f(r_i(Y_t))$ is a nonnegative supermartingale.  To this end, we  develop a Dynkin-like inequality for $e^{\cA \, t} f(r_i(Y_t))$, and as an intermediate step, we prove an analogous result for the deterministic solution $y_t$ of \eqref{odes:tor} starting at $y_0 = y$.

\begin{lemma} \label{lemma_D0}
Let $r^i_t:=r_i(y_t)$,  fix $c \in (0, \infty)$ and suppose that
$\mathrm{g}_i : \mathbb{T}^{\m}_{\ell} \times \mathbb{R}^{3 \m} \to \mathbb{R}$ satisfies $f(r^i_t) - f(r^i_s) \  \le \ \int_s^t \mathrm{g}_i( y_u ) d u$.  For all $t>0$, $0 \le s \le t$, $y \in \mathbb{T}^{\m}_{\ell} \times \mathbb{R}^{3 \m}$ with $y_0=y$, and $i\in \{1, \dots, \m \}$,\[
 e^{c \, t} f(r^i_t) - e^{c \, s} f(r^i_s) \,  \le  \, \int\limits_s^t e^{c \, u} \Big( \mathrm{g}_i + c \, f \circ r_i \Big)(y_u) du  \;.
\]   
\end{lemma}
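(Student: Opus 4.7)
The idea is to treat the statement as a Stieltjes-type chain rule, using the one-sided growth bound on $f\circ r_i$ established in Lemma~\ref{LC_tor_lemma}. Setting $\psi(u) := f(r^i_u)$ and noting that $\psi$ is càdlàg by Lemma~\ref{lem:cadlag} together with continuity of $f$, the starting observation is that the hypothesis
\[
\psi(t) - \psi(s) \ \le \ \int_s^t \mathrm{g}_i(y_u)\,du \qquad (0\le s\le t)
\]
can be rephrased as: the function
\[
\phi(u)\ :=\ \psi(u) - \int_0^u \mathrm{g}_i(y_v)\,dv
\]
is \emph{non-increasing} on $[0,\infty)$. In particular $\phi$ (and hence $\psi=\phi+\int_0^\cdot \mathrm{g}_i\,dv$) is of bounded variation on every bounded interval, and the associated Lebesgue--Stieltjes measure satisfies
\[
d\psi(u) \ = \ d\phi(u) + \mathrm{g}_i(y_u)\,du,
\]
where $d\phi$ is a non-positive (finite, signed) measure.

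With this structural fact in hand, I would apply the standard integration by parts formula for Stieltjes integrals, using the fact that $u\mapsto e^{cu}$ is $C^1$:
\[
e^{ct}\psi(t) - e^{cs}\psi(s) \ = \ \int_{(s,t]} e^{cu}\,d\psi(u) + \int_s^t c\,e^{cu}\,\psi(u)\,du .
\]
Substituting $d\psi = d\phi + \mathrm{g}_i(y_u)\,du$ and using $d\phi\le 0$ together with $e^{cu}>0$ gives
\[
\int_{(s,t]} e^{cu}\,d\psi(u) \ \le \ \int_s^t e^{cu}\,\mathrm{g}_i(y_u)\,du,
\]
which, combined with the previous display, yields
\[
e^{ct}\psi(t)-e^{cs}\psi(s) \ \le\ \int_s^t e^{cu}\bigl(\mathrm{g}_i(y_u) + c\,f(r^i_u)\bigr)\,du,
\]
which is the claim.

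The only step that needs any care is the integration by parts assertion: since $\psi$ has only countably many jumps and $u\mapsto e^{cu}$ is continuous, the jump contributions on the two sides of the formula cancel exactly, so no boundary correction is needed; the usual proof by Riemann--Stieltjes approximation on partitions $s=u_0<u_1<\cdots<u_n=t$, writing $e^{cu_{k+1}}\psi(u_{k+1})-e^{cu_k}\psi(u_k) = e^{cu_{k+1}}[\psi(u_{k+1})-\psi(u_k)] + [e^{cu_{k+1}}-e^{cu_k}]\psi(u_k)$, then summing and passing to the limit, works verbatim. This is the main (very mild) obstacle: all other steps are direct consequences of the sign of $d\phi$ and positivity of $e^{cu}$.
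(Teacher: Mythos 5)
Your proposal is correct and takes essentially the same approach as the paper: both arguments are Riemann--Stieltjes approximations of the product $e^{cu}f(r^i_u)$, with the key step being that the hypothesis makes $u\mapsto f(r^i_u)-\int_0^u \mathrm{g}_i(y_v)\,dv$ non-increasing, so that its contribution has a favorable sign. The paper carries out the partition argument by hand (decomposing $A_vF_v-A_uF_u$ into three sums and passing to the limit), while you invoke the Lebesgue--Stieltjes integration-by-parts formula and the non-positivity of $d\phi$ directly, which is a cleaner packaging of the same computation.
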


Formally, Lemma~\ref{lemma_D0} follows by the chain rule, but since $f(r^i_t)$ is not differentiable, we give a direct proof.

\begin{proof} Fix a sequence $(\Pi_n)_{n \in \mathbb{N}}$ of partitions of $[s,t]$ such that $\Pi_n \subseteq \Pi_{n+1}$ such that the mesh size $\| \Pi_n \| \to 0$ as $n \to \infty$.  Let $v \, := \, \min\{ r \in \Pi_n : r > u \}$ denote the next partition point after $u$.  Let $A_t := e^{c t}$, $F_t := f(r^i_t)$, and $\sum_u:=\sum_{\substack{u \in \Pi_n \\  u<t}}$. Then $A_t F_t - A_s F_s = \sum_{u} \big( A_v F_v - A_u F_u \big)$, and hence, \begin{align}
A_t F_t - A_s F_s =  \rn{1} + \rn{2} + \rn{3} ~~ \text{where} ~
\begin{cases} 
\rn{1} := \sum_u F_u (A_v-A_u),  \\
\rn{2} := \sum_u A_u (F_v-F_u), \\
\rn{3} := \sum_u (A_v-A_u) (F_v-F_u) .
\end{cases}
\end{align} 
As $n \to \infty$, we have: \[
\rn{1} = \int_{[s,t]} \sum_u F_u \mathbbm{1}_{[u,v]}(r) d A_r \to \int_{[s,t]} F_{r-} d A_r = \int_{[s,t]} F_{r} \dot{A}_r dr \;,
\]
by dominated convergence and continuity of $\dot{A}_r$; \[
\rn{2} \le \sum_u A_u \int_u^v \mathrm{g}_i(y_r) dr \to \int_{[s,t]} A_r \mathrm{g}_i(y_r) dr \;,
\]
by continuity of $A_r$ and dominated convergence; and, 
\[
\rn{3} \le c e^{c t} \sum_u (v-u) \int_u^{v} \mathrm{g}_i(y_r) dr \to 0 \;.
\] Hence, $A_t F_t - A_s F_s \le \liminf_{n \to \infty} (\rn{1}+\rn{2}+\rn{3}) \le \int_{[s,t]} (A_r \mathrm{g}_i(y_r) + \dot{A}_r F_r ) dr$, as required.
\end{proof}

The next lemma applies Lemma~\ref{lemma_D0} to obtain a Dynkin-like inequality for $e^{\cA \, t} f \circ r_i(Y_t)$ where $Y_t$ is the coupling process.

\begin{lemma} \label{lemma_D1}
Suppose that $\lambda>0$ satisfies condition~\eqref{eq:lambcond_tor}.
Let $R^i_t:= r_i(Y_t)$.  
There exists $C>0$ such that for all $t \in [0,1]$, $y \in \mathbb{T}^{\m}_{\ell} \times \mathbb{R}^{3 \m}$ with $Y_0=y$, and $i\in \{1, \dots, \m \}$,  \[
\E \Big( e^{\cA \, t} f(R^i_t) - f(R^i_0) \Big) \le e^{-\lambda t} \int\limits_0^t e^{\cA \, s} \Big(\mathrm{g}_i + \mathcal{A}^C_{\gamma} (f \circ r_i) + \cA \, f \circ r_i \Big)(y_s) ds + C t^2 
\]
where $y_s$ denotes the deterministic solution to \eqref{odes:tor} with the same initial condition $y_0 = y$. 
\end{lemma}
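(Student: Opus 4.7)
The plan is to condition on the first jump time $T_1$ of the coupling PDMP $(Y_t)$ and decompose $\E\bigl[e^{\cA t} f(R^i_t) - f(R^i_0)\bigr]$ into the contributions from $\{T_1>t\}$ and $\{T_1\le t\}$. On $\{T_1>t\}$, which has probability $e^{-\lambda t}$, the trajectory $(Y_u)_{u\in[0,t]}$ coincides with the deterministic flow $(y_u)_{u\in[0,t]}$. Applying Lemma~\ref{lemma_D0} with $c=\cA$ then shows that the contribution of this event is at most
\begin{equation*}
e^{-\lambda t}\int_0^t e^{\cA u}\bigl(\mathrm{g}_i + \cA\, f \circ r_i\bigr)(y_u)\,du .
\end{equation*}

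On $\{T_1\le t\}$ I further condition on $T_1=s$ (density $\lambda e^{-\lambda s}$) and split the increment as
\begin{align*}
e^{\cA t}f(R^i_t)-f(R^i_0) \ =\ & \bigl(e^{\cA s}f(r_i(y_s))-f(r_i(y))\bigr) + e^{\cA s}\bigl(f(r_i(Y_s))-f(r_i(y_s))\bigr) \\
& + \bigl(e^{\cA t}f(R^i_t)-e^{\cA s}f(r_i(Y_s))\bigr).
\end{align*}
Lemma~\ref{lemma_D0} again controls the pre-jump term; the conditional expectation of the jump term given $T_1=s$ equals $e^{\cA s}\mathcal{A}^C_\gamma(f\circ r_i)(y_s)/\lambda$ directly from the definition \eqref{eq:AC} of $\mathcal{A}^C_\gamma$. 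For the post-jump term I condition further on the second jump time $T_2$: on $\{T_2>t\}$ (probability $e^{-\lambda(t-s)}$), Lemma~\ref{lemma_D0} applied to the deterministic flow starting from $Y_s$, combined with the global boundedness of $\mathrm{g}_i$ and of $f\le 1/a$, yields an absolute bound $C_1(t-s)$; on $\{T_2\le t\}$ (probability at most $\lambda(t-s)$), the crude bound $|e^{\cA t}f(R^i_t)-e^{\cA s}f(r_i(Y_s))|\le 2e^{\cA}/a$ gives a contribution of order $t-s$. Hence the conditional expectation of the post-jump term is bounded by $C(t-s)$ for a model-dependent constant $C$.

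Integrating the three pieces against $\lambda e^{-\lambda s}\,ds$ and applying Fubini combines the pre-jump contribution with the $\{T_1>t\}$ bound to produce $\int_0^t e^{-\lambda u} e^{\cA u}(\mathrm{g}_i+\cA\,f\circ r_i)(y_u)\,du$; the jump contribution gives $\int_0^t e^{-\lambda s} e^{\cA s}\mathcal{A}^C_\gamma(f\circ r_i)(y_s)\,ds$; and the post-jump contribution is at most $C\lambda\int_0^t(t-s)\,ds=O(t^2)$. Since $|e^{-\lambda u}-e^{-\lambda t}|\le \lambda(t-u)$ and all relevant integrands are uniformly bounded by constants depending only on the model parameters, replacing $e^{-\lambda u}$ by $e^{-\lambda t}$ in both integrals costs a further $O(t^2)$. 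Together this yields the asserted inequality for all $t\in[0,1]$.

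The main obstacle is controlling the post-jump piece: because $Y_u$ for $u>T_1$ may undergo additional jumps, Lemma~\ref{lemma_D0} cannot be applied directly on $[s,t]$. The argument crucially relies on the fact that $\mathrm{g}_i$, $f\circ r_i$ and $\mathcal{A}^C_\gamma(f\circ r_i)$ are bounded by constants depending only on the model parameters and not on the state, which follows from the explicit form \eqref{gi} of $\mathrm{g}_i$, the bound $f\le 1/a$ from \eqref{eq:f}, and the fact that a velocity randomization affects only a single particle. The restriction $t\in[0,1]$ then renders all $O(t^2)$ error terms uniform in $t$, which is precisely the form in which the constant $C$ appears in the statement.
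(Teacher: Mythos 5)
Your proof is correct and follows essentially the same approach as the paper's: condition on the Poisson clock, apply Lemma~\ref{lemma_D0} to each deterministic segment, identify the first-jump contribution with $\mathcal{A}^C_\gamma(f\circ r_i)$ via \eqref{eq:AC}, and absorb the post-jump pieces (via conditioning on $T_2$) and the $e^{-\lambda u}\to e^{-\lambda t}$ replacement into an $O(t^2)$ error using the global boundedness of $\mathrm{g}_i$, $f$, and $\mathcal{A}^C_\gamma(f\circ r_i)$. The only cosmetic departure is that you merge the pre-jump and no-jump contributions through a Fubini swap rather than bounding the pre-jump piece as $O(t^2)$ separately, as the paper does; both routes produce the same estimate.
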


\begin{proof}
Recall from Definition~\ref{D:coupling}, $N_t$ represents the number of velocity randomizations that have occurred over $[0,t]$, and $T_1$ is the first jump time.
Introduce the decomposition $e^{c t} f(R_t^i) - f(R_0^i) =  \rn{1} + \rn{2} + \rn{3} $ where \begin{align*}
\begin{cases} 
\rn{1} \ := \ \big( e^{c t} f(R_t^i) - f(R_t^0) \big) \mathbbm{1}_{\{N_t=0\}} \;,  \\
\rn{2} \ := \ e^{c T_1} \big( f(R_{T_1}^i) - f(R^i_{T_1-}) \big)  \mathbbm{1}_{\{N_t \ge 1\}} \;,  \quad \text{and} \\
\rn{3} \ := \ \Big( e^{c t} f(R_t^i) - e^{c T_1} f(R_{T_1}^i)  + e^{c T_1} f(R^i_{T_1-}) -   f(R_t^0)  \Big) \mathbbm{1}_{\{N_t \ge 1\}} \;.
\end{cases}
\end{align*} 
We now bound the expectations of $\rn{1}$, $\rn{2}$, and $\rn{3}$.  

On $N_t = 0$, we have $R_s^i = r_s^i$ for all $s \le t$, where recall $r_s^i = r_i(y_s)$ denotes the corresponding distance function for the deterministic solution.  Hence, for all $y \in \mathbb{T}^{\m}_{\ell} \times \mathbb{R}^{3 \m}$, by Lemma~\ref{lemma_D0}, \begin{align}
\E( \rn{1} ) &= \Pr(N_t=0) (e^{\cA \, t} f(r^i_t) - f(r^i_0) ) \nonumber \\
&\le e^{-\lambda t} \int_0^t \big( e^{\cA \, s} \mathrm{g}_i(y_s) + \cA e^{\cA \, s} f(r^i_s) \big)  ds \;. \label{lemma_D1:1}
\end{align}

To bound $\rn{2}$, note that the event $\{ N_t \ge 1 \}$ is equivalent to the event $\{ T_1 \le t \}$, and that $Y_s = y_s$ for $s<T_1$, and hence, $Y_{T_1-}=y_{T_1-}=y_{T_1}$.  Thus, we can write \begin{align*}
\rn{2} &= e^{\cA \, T_1} \Big( f \circ r_i (\mathcal{S}^C(I_1, \xi_1, \mathcal{U}_1) y_{T_1} ) - f \circ r_i(y_{T_1}) \Big)  \mathbbm{1}_{\{T_1 \le t\}} \\
&= e^{\cA \, T_1} \Big( f \circ r_i (\mathcal{S}^C(i, \xi_1, \mathcal{U}_1) y_{T_1} ) - f \circ r_i(y_{T_1}) \Big)  \mathbbm{1}_{\{T_1 \le t, I_1 = i \}}
\end{align*} Since $T_1$, $\xi_1$, $\mathcal{U}_1$ and $I_1$ are independent, the conditional expectation of $\rn{2}$ given $T_1$ is given by \begin{align}
 \E(\rn{2} \mid T_1 = s) &= \frac{1}{\m} \E \Big( f \circ r_i (\mathcal{S}^C(i, \xi_1, \mathcal{U}_1) y_{s} ) - f \circ r_i(y_{s}) \Big) e^{\cA \, s} \mathbbm{1}_{\{s \le t \}} \nonumber \\
& = \frac{1}{\lambda} \mathcal{A}^C_{\gamma} ( f \circ r_i) (y_s) e^{\cA \, s} \mathbbm{1}_{\{s \le t \}} \;, \quad \text{and thus,} \nonumber  \\
    \E( \rn{2} ) = \int_0^{\infty} & \E(\rn{2} \mid T_1 = s)  \lambda e^{-\lambda s} ds = \int_0^t e^{(\cA - \lambda) s} \mathcal{A}^C_{\gamma} ( f \circ r_i) (y_s) ds \;.
    \label{lemma_D1:2}
\end{align}

Now we show that $\E(\rn{3})$ is of order $O(t^2)$ for small $t$.  For this purpose, we introduce the decomposition $\rn{3} = \rn{3}_a + \rn{3}_b + \rn{3}_c$ where 
\begin{align*}
\begin{cases} 
\rn{3}_a \ := \ \big( e^{c T_1} f(R^i_{T_1-}) -   f(R_t^0)  \big) \mathbbm{1}_{\{N_t \ge 1\}} \;,  \\
\rn{3}_b \ := \  \big( e^{c t} f(R_t^i) - e^{c T_1} f(R_{T_1}^i) \big)  \mathbbm{1}_{\{N_t = 1\}} \;, \quad \text{and} \\
\rn{3}_c \ := \ \Big( e^{c t} f(R_t^i) - e^{c T_1} f(R_{T_1}^i)   \Big) \mathbbm{1}_{\{N_t \ge 2\}} \;.
\end{cases}
\end{align*}

To bound $\E(\rn{3}_a)$,  note from \eqref{ieq:LC_ri} that $\mathrm{g}_i$ in \eqref{gi} is globally bounded by a constant $C_{\mathrm{g}}$ and that $R^i_s = r^i_s$ for $s<T_1$. Thus, by Lemma~\ref{lemma_D0}, there exists a constant $C_a>0$ such that \begin{align*}
    \E( \rn{3}_a ) &= \E( e^{ \cA \, T_1} f(r^i_{T_1}) - f(r^i_0);~T_1 \le t ) \\
    &\le t \big( e^{\cA \, t} C_{\mathrm{g}} + \cA e^{\cA \, t} f(\mathcal{R}) \big) (1-e^{-\lambda t}) \le C_a t^2 ~~ \text{for all $t \le 1$}  \;. 
\end{align*}
Her we have used that by Lemma~\ref{lemma_D0} and since $\mathrm{g}_i \le C_{\mathrm{g}}$,\begin{align*}
   e^{\cA \, s} f(r_s^i) - f(r_0^i) &\le \int_0^s \big( e^{\cA \, u }  C_{\mathrm{g}} + \cA e^{\cA \, u} f(r^i_u) \big) du \;,  \\
&  \le s \big( e^{\cA \, s} C_{\mathrm{g}} + \cA e^{\cA \, s} f(\mathcal{R} \big) ~~ \text{for all $s \ge 0$} \;. 
\end{align*}. A similar bound holds for $\E(\rn{3}_b)$, since on $\{N_t = 1\}$ $R^i_s = r^i(\phi^C_{s-T_1}(Y_{T_1}) =: \tilde r^i_{s-T_1}$ for all $s \in [T_1, t]$ where $\tilde r_u^i$ is the distance for the deterministic solution $\tilde y_u$ with initial condition $\tilde y_0 = Y_{T_1}$.  Hence, by Lemma~\ref{lemma_D0}, on $\{N_t = 1 \}$,\begin{align*}
e^{\cA \, t } r(R_t^i) &- e^{\cA \, T_1} f(R_{T_1}^i = e^{\cA \, T_1} \big( e^{\cA \, (t-T_1)} f(\tilde r_{t-T_1}^i) - f(\tilde{r}_0) \big) \\
&\le t (e^{\cA \;, t} C_{\mathrm{g}} + \cA e^{\cA \, t} f(\mathcal{R} \big) \;.
\end{align*} Thus, we obtain similarly as in $\E(\rn{3}_a)$, \begin{align*}
    \E( \rn{3}_b ) &= \E( e^{ \cA \, t} f(r^i_{T_1}) - e^{\cA \, T_1} f(r^i_0);~T_1 \le t ) \\
    &\le t \big( e^{\cA \, t} C_{\mathrm{g}} + \cA e^{\cA \, t} f(\mathcal{R}) \big) (1-e^{-\lambda t}) \le C_a t^2 ~~ \text{for all $t \le 1$}  \;. 
\end{align*} To bound $\E(\rn{3}_c)$, a rough bound suffices, 
\begin{align*}
    \E( \rn{3}_c ) &\le  \Pr( N_t \ge 2 ) e^{\cA \, t} f(R) \\
    &\le (1-e^{-\lambda t} - \lambda e^{-\lambda t}) e^{\cA \, t} f(R) 
     \le C_c  t^2 ~~ \text{for all $t \le 1$}  \;,
\end{align*}
with a finite constant $C_c>0$. In sum, we obtain 
\begin{equation}
    \E(\rn{3}) \le (2 C_a + C_c) t^2 ~~ \text{for all $t \le 1$} \;. \label{lemma_D1:3}
\end{equation}

Combining \eqref{lemma_D1:1}, \eqref{lemma_D1:2}, and \eqref{lemma_D1:3} we obtain for $t \le 1$: \begin{align*}
  & \E\big( e^{\cA \, t} f(R_t^i) - f(R_0^i) \big) \le e^{-\lambda t} \int_0^t e^{\cA \, s} \Big( \mathrm{g}_i + \mathcal{A}^C_{\gamma}(f \circ r_i) + \cA f \circ r_i \Big) (y_s) d s \\
  & \qquad \qquad \qquad  + \int_0^t e^{\cA \, s} (e^{-\lambda s} - e^{-\lambda t} ) \mathcal{A}^C_{\gamma}(f \circ r_i) (y_s) ds + (2 C_a + C_c) t^2 \\
  &\qquad \qquad \le e^{-\lambda t} \int_0^t e^{\cA \, s} \Big( \mathrm{g}_i + \mathcal{A}^C_{\gamma}(f \circ r_i) + \cA f \circ r_i \Big) (y_s) d s + C t^2 \;,
\end{align*}
with a finite constant $C$ depending only on the parameters $\lambda$, $\m$, etc.
In the last step, we used $|e^{-\lambda s} - e^{-\lambda t} | \le \lambda (t-s)$ and the crude bound $\mathcal{A}^C_{\gamma}(f \circ r_i) (y_s) \le \lambda f( \mathcal{R} )$ to obtain $\int_0^t e^{\cA \, s} (e^{-\lambda s} - e^{-\lambda t} ) \mathcal{A}^C_{\gamma}(f \circ r_i) (y_s) ds \lesssim t^2$. 
\end{proof}

Now we combine Lemma~\ref{lemma_D1} and Theorem~\ref{GCrho_tor} to prove that $M_t$ is a nonnegative supermartingale for every initial condition $Y_0 = y \in \mathbb{T}^{\m}_{\ell} \times \mathbb{R}^{3 \m}$.

\begin{proof}[Proof of Theorem~\ref{TOR}]
Let  $\rho_t := \sum_{i=1}^{\m} f \circ r_i( Y_t )$ with $Y_0 = y$ and let $\E_y$ denote expectation conditional on $Y_0 = y$; as the notation indicates, the underlying measure space depends on the initial point.  By Lemma~\ref{lemma_D1} and Theorem~\ref{GCrho_tor}, we obtain \[
 \E_y( M_t - M_0 ) \, \le \, \m \, C \, t^2  ~~ \text{for all $t \in [0,1]$ and $y \in \mathbb{T}^{\m}_{\ell} \times \mathbb{R}^{3 \m}$} \;. 
 \] Fix $h \in [0,1]$.  Then \[
 \E_y( M_t - M_0 ) \, \le \, \m \, C \, h \, t ~~\text{for all $t \in [0,h]$ and $y \in \mathbb{T}^{\m}_{\ell} \times \mathbb{R}^{3 \m}$} \;. 
 \] 
 For $s \ge 0$, the Markov property implies \begin{align*}
 &\E_y( M_t - M_s ) \, = \, e^{\cA \, s} \E_y( e^{\cA \, (t-s)} \rho_t - \rho_s ) \\
 & \qquad \, = \,  e^{\cA \, s} \E_y \Big( \E_{Y_s} \big( e^{\cA \,  (t-s)} \rho_{t-s} - \rho_0 \big) \Big) \, \le \,  e^{\cA \, s} \, \m \, C \, h \, (t-s) 
 \end{align*}
 for all $t \in [s, s+h]$ and $y \in \mathbb{T}^{\m}_{\ell} \times \mathbb{R}^{3 \m}$.  Hence, with $t_k := k h$ for all $k \in \mathbb{N}_0$, \[
 \E_y( M_t - M_0 ) \, = \, \sum_{k=1}^{\infty} \E_y \big( M_{t_k \wedge t} - M_{t_{k-1} \wedge t} \big) \, \le \, e^{\cA \, t} \,\m \, C \, h \, t
 \] for all $t \ge 0$ and $y \in \mathbb{T}^{\m}_{\ell} \times \mathbb{R}^{3 \m}$.  Letting $h \downarrow 0$ we obtain $\E_y ( M_t - M_0 ) \le 0 $, and thus, \begin{align*}
  & \E_y( M_t - M_s \mid \mathcal{F}_s ) \, = \, e^{\cA \, s} \E_y( e^{\cA \, (t-s)} \rho_t - \rho_s \mid \mathcal{F}_s ) \\
 & \qquad \, = \, e^{\cA \, s}\E_{Y_s}( e^{\cA \, (t-s)} \rho_{t-s} - \rho_0 ) = e^{\cA \, s} \E_{Y_s}( M_{t-s} - M_0 ) \, \le \, 0 \;.
 \end{align*} 
Hence, $M_t = e^{c_J \, t} \rho_t$ is a nonnegative supermartingale.

\medskip

Finally, we consider the process $Y_t$ with initial distribution given by an optimal coupling of the initial distributions $\nu$ and $\eta$ w.r.t.\ the distance $\mathcal W_{\rho}$, i.e., the law of $(x,v,\tilde x, \tilde v)$ has marginals $\mu$ and $ \nu $ and $Y_0 = (x,v,x-\tilde x,v-\tilde v)$, and  $\mathcal{W}_{\rho}(\mu,\nu) = \E[ \rho(Y_0)]$. Then,
for all $t\ge 0$,
the law of $Y_t$ represents a coupling of $\mu p_t$ and $ \nu p_t $, and hence by \eqref{eq:Wrho}, \[
\mathcal{W}_{\rho}(\mu p_t, \nu p_t) \ \le \ \E[ \rho(Y_t)] \ \le \ e^{-\cA \, t}  \E[ \rho(Y_0)]  \ \le \ e^{-\cA \, t}  \mathcal{W}_{\rho}(\mu , \nu) \;,
\]
which proves \eqref{eq::Wrho_tor}, as required.
\end{proof}


%
%
%

\section{Appendix} \label{appendix}
This Appendix briefly reviews and slightly adapts \cite[Theorem 5.5]{davis1984piecewise} to prove a supermartingale theorem for PDMPs needed in the proof of our main results.  To state this result, let $(\mathsf{X}_t)_{t \ge 0}$ be a PDMP with the following characteristics
\begin{enumerate}[label=(\roman*)]
\item 
boundaryless state space $S$;
\item
deterministic flow $\zeta_t: S \to S$ generated by a vector field $\mathfrak{X}: S \to \mathbb{R}^n$;
\item 
jump rates $\mathsf{J}(x)$ where $\mathsf{J}: S \to \mathbb{R}_{> 0}$; and,
\item
 jump measure $\mathsf{Q}(x,dy)$.  
 \end{enumerate}  
 On continuously differentiable functions $f$, define the generator of $\mathsf{X}_t$ as the operator $\mathcal{G}$ that outputs the function $\mathcal{G}f: S \to \mathbb{R}$ defined as \[
\mathcal{G} f(x) = ( \mathfrak{X} \cdot \nabla f) (x) + \mathsf{J}(x) \int_S \left( f(y) - f(x) \right)\mathsf{Q}(x, dy) \;.
\]  Let $g: [0, \infty) \times S \to \mathbb{R}$ be a space-time-dependent function.  For any $x \in S$, suppose that the function $t \mapsto g(t, \zeta_t(x))$ is absolutely continuous in time except at jump discontinuities where it is c\'adl\'ag and nonincreasing.  In this context, we prove that the process \[
g(t, \mathsf{X}_t) - \int_0^t \left( \frac{\partial g}{\partial t} +  \mathcal{G} g \right)(s, \mathsf{X}_s) ds
\] is a local supermartingale.

First, we recall that $\mathsf{X}_t$ solves the following time-dependent martingale problem \cite{davis1984piecewise, Da1993}.

\begin{lemma} \label{davis_l_martin}
For any $g: [0, \infty) \times S \to \mathbb{R}$ such that $g$ is differentiable in its first variable, $\mathbb{E} \sum_{s \le t} |g(s,\mathsf{X}_s) - g(s-,\mathsf{X}_{s-})| < \infty$ for each $t \ge 0$, and the function $t \mapsto g(t,\zeta_t(x))$ is absolutely continuous for all $x \in S$,  the process \[
g(t,\mathsf{X}_t) - \int_0^t \left( \frac{\partial g}{\partial t} + \mathcal{G} g \right) (s, \mathsf{X}_s) ds
\] is a local martingale.  
\end{lemma}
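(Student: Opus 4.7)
The plan is to reduce this time-inhomogeneous martingale problem to the time-homogeneous version provided by Davis's Theorem 5.5, which the excerpt has already invoked several times. I would introduce the space-time process $\tilde{\mathsf{X}}_t := (t, \mathsf{X}_t)$ on the enlarged state space $[0,\infty) \times S$. This is itself a PDMP, now time-homogeneous, with vector field $\tilde{\mathfrak{X}}(s,x) = (1, \mathfrak{X}(x))$, jump rate $\tilde{\mathsf{J}}(s,x) = \mathsf{J}(x)$, and jump measure $\tilde{\mathsf{Q}}((s,x), d(s', y)) = \delta_s(ds') \, \mathsf{Q}(x, dy)$. Its extended generator acts on suitable functions by $\tilde{\mathcal{G}} g = \partial_t g + \mathcal{G} g$. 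Once this identification is in place, the conclusion follows by applying Davis's theorem directly to $g$ regarded as a function on the enlarged space.

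Verifying that $g$ lies in the domain of $\tilde{\mathcal{G}}$ amounts to checking the three conditions required by Davis: (i) $t \mapsto g(t, \zeta_t(x))$ is absolutely continuous for every $x$, which is an explicit hypothesis; (ii) the chain rule yields $\frac{d}{dt} g(t, \zeta_t(x)) = (\partial_t g + \mathfrak{X} \cdot \nabla g)(t, \zeta_t(x))$ almost everywhere, so the integrand in the claimed identity is well-defined along the flow; and (iii) the summability hypothesis $\mathbb E \sum_{s \le t} |g(s, \mathsf{X}_s) - g(s-, \mathsf{X}_{s-})| < \infty$ guarantees that the compensated jump part is a genuine local martingale rather than only a special semimartingale.

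Alternatively, a direct derivation proceeds by telescoping across the jump times $0 = T_0 < T_1 < T_2 < \cdots$. On each deterministic interval $[T_{k-1}, T_k)$, absolute continuity and the chain rule give
\[
g((T_k \wedge t){-}, \mathsf{X}_{(T_k \wedge t){-}}) - g(T_{k-1}, \mathsf{X}_{T_{k-1}}) = \int_{T_{k-1}}^{T_k \wedge t} \left( \frac{\partial g}{\partial t} + \mathfrak{X} \cdot \nabla g \right)(s, \mathsf{X}_s) \, ds.
\]
Summing over $k$ and adding the jump contributions $\sum_{T_k \le t} [g(T_k, \mathsf{X}_{T_k}) - g(T_k{-}, \mathsf{X}_{T_k-})]$ decomposes $g(t, \mathsf{X}_t) - g(0, \mathsf{X}_0)$ into a drift part plus a pure jump part. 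The compensator of the jump part, with respect to the predictable intensity $\mathsf{J}(\mathsf{X}_s)$ and mark kernel $\mathsf{Q}(\mathsf{X}_s, \cdot)$, equals $\int_0^t \mathsf{J}(\mathsf{X}_s) \int_S \bigl( g(s,y) - g(s, \mathsf{X}_s) \bigr) \mathsf{Q}(\mathsf{X}_s, dy) \, ds$, so the difference is a local martingale by standard theory of compensated jump measures of marked point processes.

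The main technical obstacle is constructing an explicit localizing sequence. A natural choice is $\tau_n := T_n \wedge \inf\{t \ge 0 : \text{dist}(\mathsf{X}_t, \mathsf{X}_0) \ge n\}$, so that up to $\tau_n$ only finitely many jumps occur. The assumed integrability then ensures that the stopped compensated jump process is a uniformly integrable martingale on $[0, \tau_n]$, and letting $n \to \infty$ exhibits the process as a local martingale. Care is needed to confirm that $\tau_n \uparrow \infty$ almost surely, but this is automatic since the jump times of a PDMP do not accumulate in finite time whenever $\mathsf{J}$ is locally bounded.
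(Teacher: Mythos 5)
Your first paragraph reproduces the paper's own one-sentence argument: the paper asserts the lemma is a special case of Theorem~5.5 of \cite{davis1984piecewise} on a boundaryless state space, which is exactly what one reads off after the space-time augmentation $\tilde{\mathsf{X}}_t=(t,\mathsf{X}_t)$ that you spell out. So that part is correct and takes essentially the same route as the paper.

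Your remaining paragraphs offer a genuinely self-contained alternative: telescoping $g(t,\mathsf{X}_t)-g(0,\mathsf{X}_0)$ across the jump times, using absolute continuity on each deterministic stretch, and compensating the jump part against the predictable intensity $\mathsf{J}(\mathsf{X}_s)\,\mathsf{Q}(\mathsf{X}_s,\cdot)$. This bypasses the Davis machinery and makes visible exactly how the integrability hypothesis $\mathbb{E}\sum_{s\le t}|g(s,\mathsf{X}_s)-g(s-,\mathsf{X}_{s-})|<\infty$ enters (it guarantees the compensated jump sum is a local martingale), at the cost of rederiving what Davis has already packaged. Two small caveats, neither fatal here. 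For your step (ii), the chain rule $\frac{d}{dt}g(t,\zeta_t(x))=(\partial_t g+\mathfrak{X}\cdot\nabla g)(t,\zeta_t(x))$ additionally requires $g$ to be $C^1$ in the spatial variable; the paper silently assumes this since $\mathcal{G}$ is only defined on continuously differentiable functions. And the blanket claim that jump times cannot accumulate in finite time ``whenever $\mathsf{J}$ is locally bounded'' is too strong in general (the flow could leave every compact set in finite time); in this paper's setting the jump rate is the constant $\lambda$ and the flows are complete, so your localization argument does go through.
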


\begin{proof}
This is a special case of Theorem 5.5 of Ref.~\cite{davis1984piecewise} when the state space $S$ is boundaryless.
\end{proof}

Next we apply this result to functions of the process $\mathsf{X}_t$ that are piecewise absolutely continuous functions (in time) and 
that have nonincreasing jumps along the deterministic part of $\mathsf{X}_t$.

\begin{lemma} \label{davis_l_supermartin}
For any $g: [0, \infty) \times S \to \mathbb{R}$ such that $g$ is differentiable in its first variable; $\mathbb{E} \sum_{s \le t} |g(s,\mathsf{X}_s) - g(s-,\mathsf{X}_{s-})| < \infty$ for each $t \ge 0$ and for all $\mathsf{X}_0 \in S$; and the function $G(t): t \mapsto g(t,\zeta_t(x))$ is piecewise absolutely continuous, c\'adl\'ag, and $\Delta G(t) \le 0$ for all $t>0$ and for all $x \in S$; then the process \[
g(t,\mathsf{X}_t) - \int_0^t \left( \frac{\partial g}{\partial t} + \mathcal{G} g \right) (s, \mathsf{X}_s) ds
\] is a local supermartingale.  
\end{lemma}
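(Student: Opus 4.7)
The plan is to decompose $g(t, \mathsf{X}_t)$ along the path into an absolutely continuous part evolving under the deterministic flow, a deterministic jump part produced by the discontinuities of $g$ along $\zeta_\cdot$ between PDMP events, and a stochastic jump part at the PDMP jump times $T_k$. The hypothesis $\Delta G \le 0$ makes the deterministic jump process non-increasing, whereas the stochastic jump process minus its L\'evy-system compensator is a local martingale by the standard PDMP compensation formula underlying Lemma~\ref{davis_l_martin}. Rearranging the pieces against the definition of $\mathcal{G}$ then yields the claim.

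Concretely, between two consecutive PDMP jump times $T_{k-1}$ and $T_k$ one has $\mathsf{X}_s = \zeta_{s-T_{k-1}}(\mathsf{X}_{T_{k-1}})$, so by the piecewise absolute continuity of $G$ the map $s \mapsto g(s, \mathsf{X}_s)$ is itself piecewise absolutely continuous on $[T_{k-1}, T_k)$ with almost-everywhere derivative $(\partial_t g + \mathfrak{X}\cdot\nabla g)(s, \mathsf{X}_s)$ and non-positive jumps. Summing over inter-jump intervals I would obtain
\begin{equation*}
g(t, \mathsf{X}_t) \,=\, g(0, \mathsf{X}_0) + \int_0^t\!\bigl(\tfrac{\partial g}{\partial t} + \mathfrak{X}\!\cdot\!\nabla g\bigr)(s, \mathsf{X}_s)\, ds + J^{\mathrm{det}}_t + J^{\mathrm{stoch}}_t ,
\end{equation*}
where $J^{\mathrm{det}}_t$ collects the jumps of $s \mapsto g(s, \mathsf{X}_s)$ at times $s \notin \{T_k\}$ and $J^{\mathrm{stoch}}_t$ collects the jumps at the $T_k$. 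Both series converge absolutely by the integrability hypothesis on $\sum_{s \le t}|g(s, \mathsf{X}_s) - g(s-, \mathsf{X}_{s-})|$; the process $J^{\mathrm{det}}_t$ is adapted, c\`adl\`ag, and non-increasing by $\Delta G \le 0$; and by the PDMP L\'evy system the compensated process
\begin{equation*}
 M_t \,:=\, J^{\mathrm{stoch}}_t - \int_0^t \mathsf{J}(\mathsf{X}_s) \int_S \bigl( g(s, y) - g(s, \mathsf{X}_s) \bigr)\,\mathsf{Q}(\mathsf{X}_s, dy)\, ds
\end{equation*}
is a local martingale (localized along the PDMP jump times $T_n \uparrow \infty$, along which the integrability hypothesis supplies the required $L^1$ control).

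Substituting the definition of $\mathcal{G}$ and cancelling $A_t := \int_0^t (\mathfrak{X}\cdot\nabla g)(s, \mathsf{X}_s)\,ds$ on both sides gives
\begin{equation*}
g(t, \mathsf{X}_t) - \int_0^t \bigl(\tfrac{\partial g}{\partial t} + \mathcal{G} g\bigr)(s, \mathsf{X}_s)\, ds \,=\, g(0, \mathsf{X}_0) + J^{\mathrm{det}}_t + M_t ,
\end{equation*}
which is an $\mathcal{F}_0$-measurable constant plus a non-increasing adapted process plus a local martingale, hence a local supermartingale as claimed. The main technical obstacle I anticipate is verifying that the piecewise absolute continuity of $G$ along each trajectory transfers correctly to $s \mapsto g(s, \mathsf{X}_s)$ between PDMP jumps and that the almost-everywhere derivative is indeed $(\partial_t g + \mathfrak{X}\cdot\nabla g)(s, \mathsf{X}_s)$; this is essentially the chain rule for absolutely continuous functions composed with the deterministic flow, combined with a.s.\ local finiteness of the PDMP jump set, after which the remainder of the argument is bookkeeping.
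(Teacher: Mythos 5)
Your proof is correct and follows essentially the same route as the paper's. Both start from the same decomposition of $g(t,\mathsf{X}_t) - g(0,\mathsf{X}_0)$ into a drift/generator integral, a nonincreasing sum of deterministic jump discontinuities (controlled by $\Delta G \le 0$), and a local martingale coming from compensating the random PDMP jumps; the paper obtains the latter two pieces by citing the structure of Davis's Theorem~5.5 and simply observing that the extra deterministic jump term is nonpositive, whereas you reconstruct the L\'evy-system compensation explicitly and carry out the cancellation of the flow-derivative term against $\mathcal{G}$ by hand, but this is just a more self-contained account of the identical argument rather than a different approach.
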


\begin{proof}
The proof of this result is almost identical to the proof of Theorem 5.5 of \cite{davis1984piecewise} except that we must include the jumps in $g(t,\mathsf{X}_t)$ along the deterministic parts of $\mathsf{X}_t$.  Let $\{ t_i \}$ denote the jump times of the process $\mathsf{X}_t$. Then we have the following representation \begin{align*}
g(t,\mathsf{X}_t) - g(0,\mathsf{X}_0) &= \sum_{s \le t} \sum_{t_i \le t} \Delta g(s-t_i, \zeta_{s-t_i}(\mathsf{X}_{t_i})) 1_{s \in [t_i, t_{i+1})} \\
& + \int_0^t \left( \frac{\partial g}{\partial t} +  \mathcal{G} g \right)(s, \mathsf{X}_s) ds + M_t^g 
\end{align*} 
where $M_t^g$ is a local martingale.  Since the jumps in $g$ along the deterministic parts of $\mathsf{X}_t$ are nonincreasing everywhere,  \[
g(t,\mathsf{X}_t)  - \int_0^t \left( \frac{\partial g}{\partial t} +  \mathcal{G} g \right)(s, \mathsf{X}_s) ds \le g(0,\mathsf{X}_0)  + M_t^g \;.
\]  It follows that $g(t,\mathsf{X}_t) - \int_0^t \left( \frac{\partial g}{\partial t} +  \mathcal{G} g \right)(s, \mathsf{X}_s) ds$ is a local supermartingale.
\end{proof}

\begin{theorem} \label{davis_supermartin}
Suppose that $g: [0,\infty) \times S \to \mathbb{R}$ is nonnegative, satisfies the conditions of Lemma~\ref{davis_l_supermartin}, and satisfies $\int_0^t \left( \frac{\partial g}{\partial t} + \mathcal{G} g \right) (s, \mathsf{X}_s) ds \le 0$ for all $t \ge 0$ and for all $\mathsf{X}_0 \in S$.  Then $g(t,\mathsf{X}_t)$ is a supermartingale. 
\end{theorem}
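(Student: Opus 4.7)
The plan is to combine Lemma~\ref{davis_l_supermartin} with a standard localization/Fatou argument to upgrade the local supermartingale property to a true supermartingale property. The nonnegativity of $g$ and the nonpositivity of the integral term are exactly the two ingredients needed to carry out this upgrade cleanly.

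First I would invoke Lemma~\ref{davis_l_supermartin}, whose hypotheses are part of the assumption, to deduce that the process
\[
M_t \; := \; g(t, \mathsf{X}_t) - I_t, \qquad I_t \; := \; \int_0^t \left( \tfrac{\partial g}{\partial t} + \mathcal{G} g \right)(u, \mathsf{X}_u)\, du,
\]
is a local supermartingale with respect to the natural filtration $(\mathcal{F}_t)$ of $(\mathsf{X}_t)$. Choose a localizing sequence of stopping times $\tau_n \uparrow \infty$ such that $(M_{t\wedge\tau_n})_{t\ge 0}$ is a true supermartingale for each $n$.

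Second, I would transfer the supermartingale inequality from $M$ to $g$ itself. As used throughout the paper (e.g.\ in the proof of Theorem~\ref{WAH}, where the statement is applied with a pointwise bound $\partial_t g + \mathcal{G}^C_{sync} g \le 0$), the integral hypothesis in fact gives that $t \mapsto I_t$ is pathwise nonincreasing and adapted. Consequently, for $0 \le s \le t$ one has $I_{t\wedge\tau_n}\le I_{s\wedge\tau_n}$ almost surely, whence $\mathbb{E}[I_{t\wedge\tau_n}\mid\mathcal{F}_s] \le I_{s\wedge\tau_n}$. Adding this to the supermartingale inequality $\mathbb{E}[M_{t\wedge\tau_n}\mid\mathcal{F}_s]\le M_{s\wedge\tau_n}$ yields
\[
\mathbb{E}\bigl[g(t\wedge\tau_n,\mathsf{X}_{t\wedge\tau_n}) \,\big|\, \mathcal{F}_s\bigr] \; \le \; g(s\wedge\tau_n,\mathsf{X}_{s\wedge\tau_n}),
\]
so that $\bigl(g(t\wedge\tau_n,\mathsf{X}_{t\wedge\tau_n})\bigr)_{t\ge 0}$ is a nonnegative supermartingale for every $n$.

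Finally, I would remove the localization by passing to the limit $n\to\infty$. Since $\tau_n\uparrow \infty$ almost surely, $g(t\wedge\tau_n,\mathsf{X}_{t\wedge\tau_n}) \to g(t,\mathsf{X}_t)$ pointwise. The conditional Fatou lemma, applicable precisely because $g\ge 0$, combined with the previous display, gives
\[
\mathbb{E}\bigl[g(t,\mathsf{X}_t) \,\big|\, \mathcal{F}_s\bigr] \; \le \; \liminf_{n\to\infty} \mathbb{E}\bigl[g(t\wedge\tau_n,\mathsf{X}_{t\wedge\tau_n}) \,\big|\, \mathcal{F}_s\bigr] \; \le \; \liminf_{n\to\infty} g(s\wedge\tau_n,\mathsf{X}_{s\wedge\tau_n}) \; = \; g(s,\mathsf{X}_s).
\]
Specializing to $s=0$ gives the integrability bound $\mathbb{E}[g(t,\mathsf{X}_t)] \le g(0,\mathsf{X}_0) < \infty$, so $g(t,\mathsf{X}_t)$ is indeed a supermartingale. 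The only real obstacle is this localization removal: a local supermartingale is not in general a supermartingale, and it is exactly the nonnegativity hypothesis on $g$ that rescues the conclusion via conditional Fatou.
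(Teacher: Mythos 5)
Your proof is correct and follows the paper's approach: Lemma~\ref{davis_l_supermartin} gives the local supermartingale property for the compensated process $g(t,\mathsf{X}_t)-I_t$, the sign of the drift term transfers this to $g(t,\mathsf{X}_t)$ itself, and nonnegativity of $g$ together with (conditional) Fatou removes the localization. You are more explicit than the paper in two respects: you spell out the localization sequence and conditional Fatou step, and you correctly flag that passing from the compensated process to $g(t,\mathsf{X}_t)$ really uses that $t\mapsto I_t$ is pathwise nonincreasing (which does hold in the paper's applications, where the integral hypothesis comes from the pointwise bound $\partial_t g+\mathcal{G}g\le 0$), rather than merely $I_t\le 0$ as the paper's terse wording suggests; this is a genuine clarification worth recording.
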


\begin{proof}
Since the conditions for Lemma~\ref{davis_l_supermartin} hold, the process \[
g(t,\mathsf{X}_t) - \int_0^t \left( \frac{\partial g}{\partial t} + \mathcal{G} g \right) (s, \mathsf{X}_s) ds
\] is a local supermartingale.  Moreover, since $
 \int_0^t \left( \frac{\partial g}{\partial t} + \mathcal{G} g \right) (s, \mathsf{X}_s) ds \le 0$ by assumption, $g(t,\mathsf{X}_t)$ is also a local supermartingale. Since the function $g$ is also nonnegative by assumption, Fatou's lemma implies that $g(t,\mathsf{X}_t)$ is a supermartingale, as required.
\end{proof}

%
%
%
%
%
%

\section*{Acknowledgements}
N.~B-R. has been supported by the Alexander von Humboldt foundation and the National Science Foundation under Grant No.~DMS-1816378. 
 
A.~Eberle has been supported by the Hausdorff Center for Mathematics. Gef\"ordert durch die Deutsche Forschungsgemeinschaft (DFG) im Rahmen der Exzellenzstrategie des Bundes und der L\"ander - GZ 2047/1, Projekt-ID 390685813.

%
%
%


\bibliographystyle{amsplain}
\bibliography{nawaf}

\end{document}